\documentclass[a4paper, 10pt, oneside, onecolumn,sort&compress]{elsarticle}
\usepackage{amssymb}
\usepackage{mathrsfs}
\usepackage{amsfonts}
\usepackage[title]{appendix}
\usepackage{syntonly}
\newcommand{\lap}{\mbox{$\bigtriangleup$}}
\newcommand{\ra}{{\mbox{$\rightarrow$}}}
\newtheorem{thm}{Theorem}[section]

\newdefinition{remark}{Remark}[section]
\newdefinition{example}{Example}[section]
\newdefinition{corollary}{Corollary}[section]
\newdefinition{proposition}{Proposition}[section]
\newdefinition{definition}{Definition}[section]
\newproof{proof}{Proof}
\newproof{pot}{Proof of Theorem \ref{the1.1}}
\usepackage{amssymb,amsmath}

\textwidth 148mm \textheight 230mm \oddsidemargin 7mm \topmargin-2mm

\numberwithin{equation}{section}
\journal{Nonlinear Analysis: Theory, Methods $\&$ Applications}

\begin{document}
\begin{frontmatter}

\title{\textbf{A Liouville Theorem for a Class of Fractional Systems in $\mathbb{R}^n_+$}}

\author[mymainaddress]{Lizhi Zhang}
\ead{zhanglizhi2016@mail.nwpu.edu.cn}

\author[mymainaddress]{Mei Yu\corref{mycorrespondingauthor}}
\cortext[mycorrespondingauthor]{Corresponding author.}
\ead{yumei301796@gmail.com}

\author[mymainaddress1]{Jianming He}
\ead{jmingwivi@163.com}

\address[mymainaddress]{Department of Applied Mathematics, Northwestern Polytechnical University, Xi'an, Shannxi, 710129, P. R. China}

\address[mymainaddress1]{Institute of Technology and Standards, China Academy of Information and Communications Technology, Beijing, 100191, P. R. China}

\begin{abstract}
Let $0<\alpha,\beta<2$ be any real number. In this paper, we investigate a class of fractional elliptic problems of the form
\begin{equation*}
\left\{\begin{array}{lll}
(-\lap)^{\alpha/2} u(x)=f(v(x)), &  \\
(-\lap)^{\beta/2} v(x)=g(u(x)), & \qquad x\in\mathbb{R}^n_+,\\
u,v\geq0, & \qquad x\not\in\mathbb{R}^n_+.
\end{array}\right.
\end{equation*}
Applying the iteration method and the direct method of moving planes for the fractional Laplacian, without any decay assumption on the solutions at infinity, we prove the Liouville theorem of nonnegative solutions under some natural conditions on $f$ and $g$.
\end{abstract}

\begin{keyword}
The fractional Laplacian, Liouville theorem, narrow region principle, decay at infinity, a direct method of moving planes.
\end{keyword}

\end{frontmatter}
MSC(2010):  35S15, 35B06, 35J61.

%%% ---------------------------------------------------
%%% ---------------------------------------------------
%%% ---------------------------------------------------

\section{Introduction}
\label{Section 1}
Symmetry, Liouville theorems and nonexistence are very useful in studying semi-linear elliptic equations and systems. For example, in \cite{APP}, \cite{FLN}, \cite{GS1} and \cite{HL}, those properties played an essential role in deriving a priori bounds for solutions; and in \cite{CC}, \cite{KK}, \cite{MK}, \cite{MS} and \cite{PS}, they were used to obtain uniqueness of solutions. There are many other applications.

In this paper, we employ a new idea, i.e. the iteration method, to establish such properties for the following fractional system with different orders:
\begin{equation}
\left\{\begin{array}{lll}
(-\lap)^{\alpha/2} u(x)=f(v(x)), &  \\
(-\lap)^{\beta/2} v(x)=g(u(x)), & \qquad x\in\mathbb{R}^n_+,\\
u,v\equiv0, & \qquad x\not\in\mathbb{R}^n_+,
\end{array}\right.
\label{0}
\end{equation}
where $0<\alpha,\beta<2$, $n\geq3$, and $\mathbb{R}^n_+:=\{x=(x_1,x_2,\cdots,x_n)\in\mathbb{R}^n|x_n>0\}$ is the upper half Euclidean space.

Let us begin with the definition of the fractional Laplacian. The fractional Laplacian in $\mathbb{R}^n$ is a nonlocal pseudo-differential operator, assuming the form
\begin{equation}
(-\lap)^{\alpha/2}u(x)=C_{n,\alpha}PV\int_{\mathbb{R}^n}\frac{u(x)-u(z)}{|x-z|^{n+\alpha}}dz,
\label{pp}
\end{equation}
where $0<\alpha<2$ is any real number and $PV$ stands for the Cauchy principle value.

One can also extend the operator in (\ref{pp}) to a wider space of functions
\begin{equation}
{\cal L}_{\alpha}(\mathbb{R}^n)=\{u\left|\right.\int_{\mathbb{R}^n}\frac{|u(x)|}{1+|x|^{n+\alpha}}dx<\infty\}
\label{L}
\end{equation}
by
$$
<(-\lap)^{\alpha/2}u, \phi>=\int_{\mathbb{R}^n}u(-\lap)^{\alpha/2}\phi dx,\quad \text{for all} ~\phi\in C_0^\infty(\mathbb{R}^n).
$$
It is easy to verify that for $u\in C_{loc}^{1,1}(\mathbb{R}^n)\cap {\cal L}_\alpha(\mathbb{R}^n)$, the integral on the right hand side of (\ref{pp}) is well defined.

There are several distinctly different ways to define the fractional Laplacian in a domain $\Omega\in\mathbb{R}^n$, which coincide when the domain is the entire Euclidean space, but can otherwise be quite different. For example, Cabre and Tan \cite{CT} studied a nonlocal problem, taking as the fractional Laplacian the operator with the same eigenfunctions as the regular Laplacian, by extending to one further dimension. Another way is to restrict the integration to the domain:
\begin{equation}
(-\lap)_\Omega^{\alpha/2}u(x)=C_{n,\alpha}PV\int_{\Omega}\frac{u(x)-u(z)}{|x-z|^{n+\alpha}}dz,
\label{_+}
\end{equation}
known as the regional fractional Laplacian \cite{Gu}. To guarantee the validity of the integration on the right hind side of (\ref{_+}), one need $u\in C_{loc}^{1,1}(\Omega)\cap{\cal L}_\alpha(\Omega)$, where ${\cal L}_\alpha(\Omega)$ is defined as (\ref{L}) by substituting $\mathbb{R}^n$ for $\Omega$.

In recent years, the fractional Laplacian has been frequently used to model diverse physical phenomena, such as phase transitions, flame propagation, the turbulence, water waves, anomalous diffusion and quasi-geostrophic flows (see \cite{BoG} \cite{Co} \cite{CaV} \cite{TZ} and the references therein). It also has various applications in probability, optimization and finance (see \cite{A} \cite{Be} \cite{CT}). In particular, the fractional Laplacian can be understood as the infinitestmal generator of a stable L\'{e}vy process \cite{Be}.

In our work, we consider the fractional Laplacians $(-\lap)^{\alpha/2}$, $(-\lap)^{\beta/2}$ in the following setting
\begin{equation}
(-\lap)^{\alpha/2}u(x)=C_{n,\alpha}PV\int_{\mathbb{R}^n_+}
\frac{u(x)-u(z)}{|x-z|^{n+\alpha}}dz,~~(-\lap)^{\beta/2}v(x)=
C_{n,\beta}PV\int_{\mathbb{R}^n_+}\frac{v(x)-v(z)}{|x-z|^{n+\beta}}dz,
\label{_+_}
\end{equation}
and we suppose that $u\in C_{loc}^{1,1}(\mathbb{R}^n_+)\cap {\cal L}_\alpha(\mathbb{R}^n_+)$, $v\in C_{loc}^{1,1}(\mathbb{R}^n_+)\cap {\cal L}_\beta(\mathbb{R}^n_+)$.

We first list several \textsl{maximum principles} for System (\ref{0}) in Section 2, with these maximum principles, using \textsl{the iteration method and the direct method of moving planes for the fractional Laplacian}, we mainly prove
\begin{thm}
Assume that $f(t),g(t)\geq0$ are strictly increasing about $t$ in $[0,+\infty)$, and $\frac{f(t)}{t^{p_0}}$,$\frac{g(t)}{t^{q_0}}$ are non-increasing in $t>0$ with $p_0=\frac{n+\alpha}{n-\beta}$, $q_0=\frac{n+\beta}{n-\alpha}$. Suppose the nonnegative solutions of (\ref{0}) $u\in\left({\cal L}_\alpha(\mathbb{R}^n_+)\cap C_{loc}^{1,1}(\mathbb{R}^n_+)\cap C(\mathbb{R}^n)\right)$, $v\in\left({\cal L}_\beta(\mathbb{R}^n_+)\cap C_{loc}^{1,1}(\mathbb{R}^n_+)\cap C(\mathbb{R}^n)\right)$, then $u=v\equiv0$ in $\mathbb{R}^n$, and $f(0)=g(0)=0$.
\label{thm1}
\end{thm}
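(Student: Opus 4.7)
The plan is to apply the direct method of moving planes along the $x_n$-direction, using the iteration method announced in the introduction to dispense with any decay hypothesis at infinity. For $\lambda>0$ set $T_\lambda=\{x_n=\lambda\}$, $\Sigma_\lambda=\{0<x_n<\lambda\}$, and $x^\lambda=(x_1,\ldots,x_{n-1},2\lambda-x_n)$, and define
\[
U_\lambda(x)=u(x^\lambda)-u(x),\qquad V_\lambda(x)=v(x^\lambda)-v(x).
\]
On $\Sigma_\lambda$ the pair satisfies the antisymmetric system
\[
(-\lap)^{\alpha/2}U_\lambda(x)=f(v(x^\lambda))-f(v(x)),\quad(-\lap)^{\beta/2}V_\lambda(x)=g(u(x^\lambda))-g(u(x)),
\]
and outside $\Sigma_\lambda$ one has $U_\lambda,V_\lambda\ge0$, since $u=v\equiv0$ off $\mathbb{R}^n_+$ and $u,v\ge0$ everywhere. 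The goal is to prove $U_\lambda,V_\lambda\ge0$ in $\Sigma_\lambda$ for every $\lambda>0$.

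The first step is to start the plane: for $\lambda$ close to $0$, $\Sigma_\lambda$ is a narrow slab, and the narrow region principle from Section~2, applied to the coupled antisymmetric system and combined with the monotonicity of $f,g$, yields $U_\lambda,V_\lambda\ge0$ there. The second, decisive step is to show that
\[
\lambda_0=\sup\{\lambda>0:U_\mu,V_\mu\ge0\ \text{in}\ \Sigma_\mu\ \forall\,0<\mu\le\lambda\}
\]
equals $+\infty$. Assume for contradiction $\lambda_0<\infty$. Without any decay at infinity, the classical negative-minimum argument is unavailable, so I would introduce the negativity sets $D_\lambda=\{U_\lambda<0\}\cap\Sigma_\lambda$, $E_\lambda=\{V_\lambda<0\}\cap\Sigma_\lambda$, and use the monotonicity of $f,g$ together with the non-increasing quotients $f(t)/t^{p_0}$, $g(t)/t^{q_0}$ (with $p_0=(n+\alpha)/(n-\beta)$, $q_0=(n+\beta)/(n-\alpha)$) to derive pointwise bounds
\[
(-\lap)^{\alpha/2}U_\lambda^{-}\le C\,v^{\,p_0-1}\,V_\lambda^{-}\ \text{on}\ D_\lambda,\qquad (-\lap)^{\beta/2}V_\lambda^{-}\le C\,u^{\,q_0-1}\,U_\lambda^{-}\ \text{on}\ E_\lambda.
\]
Inverting these by the Green's representation on $\Sigma_\lambda$ and applying Hardy–Littlewood–Sobolev at the Sobolev dual exponent produced exactly by the critical pair $(p_0,q_0)$ leads to a coupled inequality
\[
\|U_\lambda^{-}\|_{L^q(\Sigma_\lambda)}+\|V_\lambda^{-}\|_{L^q(\Sigma_\lambda)}\le\Phi(\lambda)\bigl(\|U_\lambda^{-}\|_{L^q(\Sigma_\lambda)}+\|V_\lambda^{-}\|_{L^q(\Sigma_\lambda)}\bigr),
\]
where $\Phi(\lambda)$ is controlled by integrals of $u^{q_0-1},v^{p_0-1}$ over $D_\lambda\cup E_\lambda$. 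Since $|D_{\lambda_0}|=|E_{\lambda_0}|=0$ by continuity, absolute continuity makes $\Phi(\lambda)<1$ for $\lambda$ just above $\lambda_0$. Iteration then forces $U_\lambda^{-}=V_\lambda^{-}=0$ on $\Sigma_\lambda$, contradicting the definition of $\lambda_0$.

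With $\lambda_0=\infty$, the relations $u(x^\lambda)\ge u(x)$, $v(x^\lambda)\ge v(x)$ for every $\lambda>0$ give that $u,v$ are non-decreasing in $x_n$. Since $u=v=0$ on $\partial\mathbb{R}^n_+$ by continuity and the exterior condition, and the integrability $u\in{\cal L}_\alpha(\mathbb{R}^n_+)$, $v\in{\cal L}_\beta(\mathbb{R}^n_+)$ rules out any strictly positive horizontal limit as $x_n\to\infty$ (otherwise the limit profile would be a bounded translation-invariant solution of a reduced problem and would produce an incompatible contribution to the weighted integral), monotonicity upgrades the resulting vanishing at infinity to $u\equiv v\equiv0$ in $\mathbb{R}^n$. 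Substituting back into the equations yields $f(0)=g(0)=0$. The hardest part is the iteration step: engineering the two inequalities so that the coupled $L^q$ estimate is genuinely contractive on a right-neighborhood of $\lambda_0$. This depends crucially on the critical exponents $p_0,q_0$ and on the non-increasing-quotient hypothesis on $f,g$, which together are exactly what balance the two Hardy–Littlewood–Sobolev estimates so that they close up into a contraction rather than merely an estimate.
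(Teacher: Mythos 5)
Your plan — moving planes directly on $u,v$ along the $x_n$-direction with an HLS contraction on the negativity sets — runs into the very obstruction the paper is built to avoid. The contraction step needs the quantities $\|v^{p_0-1}\|_{L^{n/\alpha}}$ and $\|u^{q_0-1}\|_{L^{n/\beta}}$ over $D_\lambda\cup E_\lambda$ to be finite and then small, i.e.\ essentially $u\in L^{n(q_0-1)/\beta}(\mathbb{R}^n_+)$, $v\in L^{n(p_0-1)/\alpha}(\mathbb{R}^n_+)$. The theorem assumes no decay at all: membership in ${\cal L}_\alpha(\mathbb{R}^n_+)$ only means $\int |u|/(1+|x|^{n+\alpha})<\infty$, which every bounded function (even a positive constant) satisfies, so neither finiteness of those norms nor the "absolute continuity" smallness near $\lambda_0$ is available; likewise your starting step on the unbounded slab would need $\underline{\lim}_{|x|\to\infty}U_\lambda,V_\lambda\geq0$, which is unknown. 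In the paper these integrability facts come only from the decay (\ref{infi1}), which is itself produced by first performing Kelvin transforms centered at boundary points and moving planes in the horizontal directions, together with the reduction of $f,g$ to the pure powers $C_1t^{p_0}$, $C_2t^{q_0}$; only then is the integral-form moving plane argument in the $x_n$-direction (Claim 3.3) run. A further structural problem: the operators here are defined by integration over $\mathbb{R}^n_+$ only, as in (\ref{_+_}), and $\mathbb{R}^n_+$ is not invariant under reflection about $\{x_n=\lambda\}$, so the pointwise antisymmetric system $(-\lap)^{\alpha/2}U_\lambda=f(v(x^\lambda))-f(v(x))$ you start from is not valid; this is precisely why the paper's $x_n$-direction step is carried out on the equivalent Green's-function system of Proposition \ref{pro4.123} rather than on the differential system.

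Your endgame is also not correct even if $\lambda_0=+\infty$ were established: monotonicity of $u,v$ in $x_n$ together with $u\in{\cal L}_\alpha(\mathbb{R}^n_+)$, $v\in{\cal L}_\beta(\mathbb{R}^n_+)$ does not rule out a strictly positive limit as $x_n\to\infty$, since the weight $1/(1+|x|^{n+\alpha})$ is integrable and constants belong to these spaces; so no contradiction follows and $u\equiv v\equiv0$ cannot be concluded this way. The paper instead gets its contradictions differently in the two branches of the dichotomy: in branch (i) monotonicity in $x_n$ contradicts the established decay (\ref{infi1}); in branch (ii), where $u=u(x_n)$, $v=v(x_n)$, it needs the genuinely nontrivial iteration of Claim 3.4 — repeated lower bounds through the half-space Green's function producing exponents $\varphi_{2m-1},\varphi_{2m}\geq0$ — to contradict the finiteness of $u$ and $v$. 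These two ingredients (the Kelvin-transform/symmetry dichotomy and the iteration for the one-dimensional case) are missing from your proposal, and without them the argument does not close.
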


\begin{remark}
If $f(0)$ or $g(0)$ does not equal to $0$, then (\ref{0}) admits no nonnegative solutions under the conditions of Theorem \ref{thm1}.
\end{remark}

Everyone knows that a majority of results are about the fractional systems involving the same order operators, please see \cite{14}, \cite{33}, \cite{34} and \cite{MLZ}. However, few results have been derived for fractional systems with different orders. Here what we want to emphasize is that, our result is precisely about a class of systems with different orders $\alpha,\beta$ but also contains the same-order case $\alpha=\beta$; moreover, we allow $\alpha,\beta$ to be any real number between 0 and 2, and as far as we know no other results have managed this, even for systems with more simpler nonlinear terms such as
\begin{equation}
\left\{\begin{array}{lll}
(-\lap)^{\alpha/2} u(x)=v^p(x), &  \\
(-\lap)^{\beta/2} v(x)=u^q(x), & \qquad x\in\mathbb{R}^n_+,\\
u,v\equiv0, & \qquad x\not\in\mathbb{R}^n_+,
\end{array}\right.
\label{0_0}
\end{equation}
with $p\neq q$. That is

\begin{remark}
In Theorem \ref{thm1}, the orders $\alpha,\beta$ of the fractional system (\ref{0}) can be any real number between 0 and 2, no matter $\alpha=\beta$ or $\alpha\neq\beta$.
\label{rem2}
\end{remark}

The significance of our work also lies in the generalities of the nonlinear terms $f(t),g(t)$, $t\geq0$. Obviously, $f(t),g(t)$ represent a large family of functions such as $\ln(1+t)$, $at^p$, $bt+\ln(1+t)$ and so on. When $f(t)=t^p$, $g(t)=t^q$ and $\alpha=\beta$, the system (\ref{0}) becomes the famous fractional Lane-Emden system in $\mathbb{R}^n_+$
\begin{equation}
\left\{\begin{array}{lll}
(-\lap)^{\alpha/2} u(x)=v^p(x), &  \\
(-\lap)^{\alpha/2} v(x)=u^q(x), & \qquad x\in\mathbb{R}^n_+,\\
u,v\equiv0, & \qquad x\not\in\mathbb{R}^n_+,
\end{array}\right.
\label{00}
\end{equation}
where $0<\alpha<2$.

In the past several decades, the celebrate Lane-Emden system
\begin{equation}
\left\{\begin{array}{lll}
-\lap u(x)=v^p(x), &  \\
-\lap v(x)=u^q(x), & \quad x\in\mathbb{R}^n.\\
u,v>0, &
\end{array}\right.
\label{i01}
\end{equation}
has played a central role in the progression of nonlinear analysis. Some basic results, for example, the eigenfunction theory, the critical point theory, the a priori estimates and the Liouville-type theorems, have been obtained, among which the Liouville theorems have grasped more and more attentions, but have not yet been fully studied.

The famous Lane-Emden conjecture states

\textsl{For $p,q>0$, problem (\ref{i01}) possesses no classical solutions in the subcritical case $\frac{1}{p+1}+\frac{1}{q+1}>1-\frac{2}{n}$.}

Proving such a nonexistence result seems to be challenging, and it is still open for dimensions $n\geq5$. In an important paper \cite{6}, Mitidiery authenticated the Lane-Emden conjecture for radial solutions, and proved that (\ref{i01}) has bounded radial classical solutions in the critical and the supercritical cases ($\frac{1}{p+1}+\frac{1}{q+1}\leq1-\frac{2}{n}$), which means that the nonexistence theorem is optimal for radial solutions. Serrin and Zou \cite{77} also derived the above existence result. For non-radial solutions, Souto \cite{9}, Mitidieri \cite{6} and Serrin and Zou \cite{7} established the conjecture in dimensions $n=1,2$, while in $\mathbb{R}^3$, Serrin and Zou \cite{7} proved the nonexistence of polynomially bounded solutions, an assumption that was dropped by Pol\'{a}\u{c}ik, Quittner and Souplet \cite{8} a decade later. More recently, Souplet \cite{9} settled the conjecture completely in $\mathbb{R}^4$ and partly in higher dimensions $n\geq5$. Further evidence supporting the conjecture can also be found in \cite{12}, \cite{34}, \cite{11}, \cite{10} and \cite{13}.

Along with the emerging of the fractional Laplacian, the counterpart of the Lane-Emden system involving the fractional Laplacian, i.e. the fractional Lane-Emden system, is also getting more and more attentions, but is much less understood than the Lane-Emden system. The main difficulty is caused by the non-locality of the fractional Laplacian. To overcome it, Chen, Li and Ou \cite{ChLO} introduced \textsl{the method of moving planes in integral forms}. Latter, Chen, Li and Li \cite{CLL} developed \textsl{a direct method of moving planes for the fractional Laplacian}, which enable one to deal with the fraction equations and systems directly.

Recently, Leite and Montenegro \cite{30} established the existence and uniqueness of positive viscosity solutions to the fractional Lane-Emden system in $\Omega$
\begin{equation}
\left\{\begin{array}{lll}
(-\lap)^{\alpha/2}u(x)=v^p(x), &\\
(-\lap)^{\alpha/2}v(x)=u^q(x), &x\in\Omega,\\
u=v=0, &x\not\in\Omega,
\end{array}\right.
\label{i04}
\end{equation}
with $pq\neq1,~p,q>0$ in the supercritical case $\frac{1}{p+1}+\frac{1}{q+1}>\frac{n+\alpha}{n}$, where $\Omega\subseteq\mathbb{R}^n$ is a smooth bounded domain. Quaas and Xia \cite{32} considered (\ref{00}) and proved nonexistence of positive viscosity solutions under $1<p,q<\frac{n+2\alpha}{n-2\alpha}$.

Actually, we have also partially solved the fractional Lane-Emden conjecture, no matter the same-order one or the different-order one, in dimensions $n\geq3$ as a byproduct of Theorem \ref{thm1}:
\begin{corollary}
From Theorem \ref{thm1} and Remark \ref{rem2}, one can see that
\begin{description}
\item[~~~\emph{(\textbf{i})}] for any $0<\alpha<2$ and $0<p,q\leq\frac{n+\alpha}{n-\alpha}$, we also obtained the nonexistence of positive solutions to the fractional Lame-Emden system (\ref{00}) in $\mathbb{R}^n_+$ for dimensions $n\geq3$;
\item[~~~\emph{(\textbf{ii})}] for any $0<\alpha,\beta<2$ and $0<p\leq\frac{n+\alpha}{n-\beta}$, $0<q\leq\frac{n+\beta}{n-\alpha}$, we also obtained the nonexistence of positive solutions to the fractional Lame-Emden system (\ref{0_0}) in $\mathbb{R}^n_+$ for dimensions $n\geq3$.
\end{description}
\label{rem3}
\end{corollary}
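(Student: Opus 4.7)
The plan is to derive both statements of the corollary as immediate consequences of Theorem \ref{thm1} by specializing the nonlinearities to the power functions $f(t) = t^p$ and $g(t) = t^q$. Note first that part (\textbf{i}) is precisely the diagonal case $\beta = \alpha$ of part (\textbf{ii}), since then $p_0 = q_0 = \frac{n+\alpha}{n-\alpha}$ and system (\ref{0_0}) collapses to (\ref{00}); hence it suffices to carry out the argument for (\textbf{ii}), and (\textbf{i}) drops out by setting $\beta=\alpha$.

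The core of the proposal is to verify that $f(t) = t^p$ and $g(t) = t^q$ satisfy the structural hypotheses of Theorem \ref{thm1} for exponents in the stated ranges. First, for any $p, q > 0$, the maps $t \mapsto t^p$ and $t \mapsto t^q$ are nonnegative and strictly increasing on $[0, +\infty)$, yielding the first hypothesis. Second, for the monotonicity-of-ratio condition, one computes
\begin{equation*}
\frac{f(t)}{t^{p_0}} = t^{p - p_0}, \qquad \frac{g(t)}{t^{q_0}} = t^{q - q_0},
\end{equation*}
and these are non-increasing on $(0, +\infty)$ exactly when $p - p_0 \leq 0$ and $q - q_0 \leq 0$, i.e.\ when $p \leq \frac{n+\alpha}{n-\beta}$ and $q \leq \frac{n+\beta}{n-\alpha}$. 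This is precisely the exponent range declared in part (\textbf{ii}) of the corollary; crucially, the hypothesis of Theorem \ref{thm1} requires only \emph{non-increase} (not strict decrease), which is what permits the critical endpoints $p = p_0$ and $q = q_0$ to be included.

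With the hypotheses verified, Theorem \ref{thm1} applies to any nonnegative solution pair $(u,v)$ of (\ref{0_0}) lying in the spaces $\mathcal{L}_\alpha(\mathbb{R}^n_+) \cap C^{1,1}_{loc}(\mathbb{R}^n_+) \cap C(\mathbb{R}^n)$ and its $\beta$-analogue, and forces $u \equiv v \equiv 0$. Any hypothetical positive solution would in particular be a nonnegative solution satisfying these regularity conditions (as is standard for classical solutions of the pure-power fractional Lane--Emden system), giving a contradiction and establishing the claimed nonexistence. The side-conclusion $f(0)=g(0)=0$ appearing in Theorem \ref{thm1} is automatically consistent with the power choice, since $0^p = 0^q = 0$, so the Remark following Theorem \ref{thm1} imposes no additional restriction here. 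The deduction is genuinely a routine specialization; the only point requiring care is confirming that the inclusive upper bounds on $p$ and $q$ are admitted by the non-strict monotonicity hypothesis, which they are.
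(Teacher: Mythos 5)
Your proposal is correct and follows exactly the route the paper intends: the corollary is an immediate specialization of Theorem \ref{thm1} to $f(t)=t^p$, $g(t)=t^q$, and your verification that these powers are nonnegative, strictly increasing, and have $f(t)/t^{p_0}=t^{p-p_0}$ non-increasing precisely when $p\leq p_0$ (with the analogous statement for $q$) is the whole content. The paper offers no further argument, so there is nothing to compare beyond noting that your explicit check of the endpoint cases $p=p_0$, $q=q_0$ is a worthwhile clarification the paper leaves implicit.
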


In the proof of Theorem \ref{thm1}, some new ideas are involved. Here we briefly illustrate them. During the proof, we first derive that either the solutions $u(x)=v(x)\equiv0$ or $u(x),v(x)>0$ in $\mathbb{R}^n_+$ by virtue of (\ref{0}) and Theorem \ref{thm2.1}. For the case $u(x),v(x)>0$ in $\mathbb{R}^n_+$, to apply the method of moving planes, we make proper Kelvin transforms centered at $x^0\in\partial\mathbb{R}^n_+$ ($\partial\mathbb{R}^n_+:=\{x=(x_1,x_2,\cdots,x_n)\in\mathbb{R}^n|x_n=0\}$ is the boundary of $\mathbb{R}^n_+$, and $x^0$ is arbitrarily chosen):
$$
\bar{u}(x)=\frac{1}{|x-x^0|^{n-\alpha}}u(x^0+\frac{x-x^0}{|x-x^0|^2}),~~x\in\mathbb{R}^n_+,
$$
$$
\bar{v}(x)=\frac{1}{|x-x^0|^{n-\beta}}v(x^0+\frac{x-x^0}{|x-x^0|^2}),~~x\in\mathbb{R}^n_+,
$$
then $\bar{u}, ~\bar{v}$ satisfy
\begin{equation}
(-\lap)^{\alpha/2} \bar{u}(x)=\frac{1}{|x-x^0|^{n+\alpha}}f(|x-x^0|^{n-\beta}\bar{v}(x)),\quad x\in\mathbb{R}^n_+,
\label{iyy03002}
\end{equation}
\begin{equation}
(-\lap)^{\beta/2} \bar{v}(x)=\frac{1}{|x-x^0|^{n+\beta}}g(|x-x^0|^{n-\alpha}\bar{u}(x)),\quad x\in\mathbb{R}^n_+,
\label{iyy3002}
\end{equation}
respectively. Now through the moving plane method (moving the planes along any direction which is perpendicular to the $x_n$ axis), we have two possibilities: ($i$) $\bar{u}$, $\bar{v}$ are symmetric about some line $l_Q\neq L_{x^0}$, ($ii$) $\bar{u}$, $\bar{v}$ are symmetric about $l_{x^0}$, where $l_Q$ ($l_{x^0}$) denotes the line parallel to $x_n$-axis and passing through Point $Q$ ($x^0$).

Here the new ideas we want to underline are that, in possibility ($i$), we derive by (\ref{iyy03002}), (\ref{iyy3002}) and the symmetry of $\bar{u}$, $\bar{v}$ that
\begin{equation}
u(x)\sim(\frac{1}{|x|^{n-\alpha}}),~~v(x)\sim(\frac{1}{|x|^{n-\beta}})~~\text{at ~infinity},
\label{00.00}
\end{equation}
$$
f(t)=C_1t^{\frac{n+\alpha}{n-\beta}} ~\text{on}~ (0,max_{\mathbb{R}^n_+}v],~~g(t)=C_2t^{\frac{n+\beta}{n-\alpha}} ~\text{on}~ (0,max_{\mathbb{R}^n_+}u]
$$
for some constants $C_1,C_2>0$, and hence reduce (\ref{0}) into
\begin{equation*}
\left\{\begin{array}{lll}
(-\lap)^{\alpha/2} u(x)=C_1v^{\frac{n+\alpha}{n-\beta}}, &  \\
(-\lap)^{\beta/2} v(x)=C_2u^{\frac{n+\beta}{n-\alpha}}, & \qquad x\in\mathbb{R}^n_+,\\
u,v\equiv0, & \qquad x\not\in\mathbb{R}^n_+,
\end{array}\right.
\end{equation*}
which enable us to apply the method of moving planes in integral forms directly to $u(x),v(x)$ along the positive $x_n$ direction, and finally obtain that $u,v$ are increasing about $x_n$. This is a contradiction with (\ref{00.00}), therefore (\ref{0}) admits no positive solutions.

To ($ii$), it immediately follows that $u=u(x_n),v=v(x_n)$, and this is a contradiction with the finiteness of $u,v$ respectively, which we can arrive at through the iteration method and ingenious computation.

This paper is organized as follows. In section 2, we list several maximum principles which will be used in the proof of Theorem \ref{thm1}. Section 3 is mainly dedicated to the proof of Theorem \ref{thm1}. In Section 4, we prove four claims which are used in Section 3.

Throughout this paper, we denote $c,~c_0,~A,~C,~A_i,~B_i,~C_i,~i\in N$ as positive constants whose values may be different in different lines.

\section{Three Known Maximum Principles}

Similarly to Theorem 2.1 in \cite{CLL}, we obtained the following maximum principle for $\alpha$-super-harmonic functions, where $(-\lap)^{\alpha/2}$ is defined as (\ref{_+_}).
\begin{thm}(Maximum Principle) Let $\Omega$ be an bounded domain in $\mathbb{R}^n_+$. Assume that $u\in C^{1,1}_{loc}(\Omega)\cap\cal{L}_\alpha$$(\mathbb{R}^n_+)$ is semi-continuous on $\bar{\Omega}$ for $0<\alpha<2$ and satisfies
\begin{equation}
\left\{\begin{array}{ll}
(-\lap)^{\alpha/2}u(x)\geq0, & \qquad in ~\Omega, \\
u(x)\geq0, & \qquad in ~\mathbb{R}^n_+\backslash\Omega,
\end{array}\right.
\label{thm1.131}
\end{equation}
then
\begin{description}
\item[~~~\emph{(i)}]
\begin{equation}
u(x)\geq0~ ~in ~\Omega;
\label{thm1.01}
\end{equation}
\item[~~~\emph{(ii)}] If $u=0$ somewhere in $\Omega$, then
\begin{equation*}
u(x)=0~almost~everywhere~in~\mathbb{R}^n_+;
\label{eq1.01}
\end{equation*}
\item[~~~\emph{(iii)}] Conclusions (i) and (ii) hold for the unbounded region $\Omega$ if we further assume that
\begin{equation}
\displaystyle\underset{|x| \ra \infty}{\underline{\lim}}u(x)\geq0.
\label{eq1.02}
\end{equation}
\end{description}
\label{thm2.1}
\end{thm}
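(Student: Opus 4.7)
The plan is to execute the standard fractional maximum principle argument adapted to the regional-type operator defined via integration over $\mathbb{R}^n_+$. The core observation is that if the infimum of $u$ over $\mathbb{R}^n_+$ is negative and attained at a point $x^\ast\in\Omega$, then the pointwise integral representation of $(-\lap)^{\alpha/2}u(x^\ast)$ forces this value to be strictly negative, contradicting (\ref{thm1.131}). All three conclusions reduce to careful use of this observation combined with the lower semi-continuity of $u$ on $\bar\Omega$ and the integrability furnished by the class $\mathcal{L}_\alpha(\mathbb{R}^n_+)$.

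For part (i), I would argue by contradiction: suppose $\inf_{\bar\Omega}u<0$. Since $\Omega$ is bounded and $u$ is lower semi-continuous on the compact set $\bar\Omega$, this infimum is attained at some $x^\ast$; the hypothesis $u\geq 0$ on $\mathbb{R}^n_+\setminus\Omega$ forces $x^\ast\in\Omega$. Because $u\in C^{1,1}_{loc}(\Omega)\cap\mathcal{L}_\alpha(\mathbb{R}^n_+)$, the principal-value integral
\[
(-\lap)^{\alpha/2}u(x^\ast)=C_{n,\alpha}\,PV\!\int_{\mathbb{R}^n_+}\frac{u(x^\ast)-u(z)}{|x^\ast-z|^{n+\alpha}}\,dz
\]
is well defined. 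On $\Omega$ the integrand is nonpositive by minimality, while on $\mathbb{R}^n_+\setminus\Omega$ the integrand is strictly negative since $u(x^\ast)<0\leq u(z)$; the latter region has positive measure because $\Omega$ is bounded. Hence $(-\lap)^{\alpha/2}u(x^\ast)<0$, contradicting (\ref{thm1.131}).

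For part (ii), if $u(x^\ast)=0$ at some $x^\ast\in\Omega$, then by (i) the point $x^\ast$ realizes the global minimum of $u$ on $\mathbb{R}^n_+$. Substituting $u(x^\ast)=0$ and using the hypothesis $(-\lap)^{\alpha/2}u(x^\ast)\geq 0$ yields
\[
0\leq-C_{n,\alpha}\int_{\mathbb{R}^n_+}\frac{u(z)}{|x^\ast-z|^{n+\alpha}}\,dz\leq 0,
\]
because each $u(z)\geq 0$. This forces $u(z)/|x^\ast-z|^{n+\alpha}=0$ for a.e.\ $z\in\mathbb{R}^n_+$, so $u\equiv 0$ a.e.\ in $\mathbb{R}^n_+$.

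For part (iii), the only extra ingredient is to relocate a potential negative minimum into a bounded portion of $\Omega$. If $\inf_\Omega u=:-m<0$, then the assumption $\liminf_{|x|\to\infty}u(x)\geq 0$ gives $R>0$ with $u(x)\geq -m/2$ for $|x|>R$; hence the infimum is realized on the compact set $\bar\Omega\cap\bar{B}_R$, and the argument of (i)–(ii) applies verbatim. The main technical point to keep in mind is ensuring that the strictly negative contribution on $\mathbb{R}^n_+\setminus\Omega$ genuinely produces a nonzero quantity, which is where the boundedness of $\Omega$ (or, in the unbounded setting, the strict gap $-m<-m/2$ supplied by the lim inf hypothesis) is essential; the regularity $u\in C^{1,1}_{loc}$ handles the PV near $x^\ast$, and the tail bound from $u\in\mathcal{L}_\alpha(\mathbb{R}^n_+)$ ensures absolute convergence of the far-field portion.
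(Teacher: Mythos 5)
Your proposal is correct and follows essentially the same route as the paper: locate a negative minimum in the interior of $\Omega$ via lower semi-continuity, use the pointwise integral representation over $\mathbb{R}^n_+$ to force $(-\lap)^{\alpha/2}u<0$ there (the strict negativity coming from the region $\mathbb{R}^n_+\setminus\Omega$ where $u\geq0$), derive part (ii) from the identity at a zero minimum, and reduce (iii) to the bounded case using the liminf condition. No substantive difference from the paper's argument.
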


The proof of Theorem \ref{thm2.1} here is similar to the proof of Theorem 2.1 in \cite{CLL}, now let us prove it briefly.

\begin{proof} If (\ref{thm1.01}) does not hold, then the semi-continuity of $u$ on $\bar{\Omega}$ indicates that there exists a $x^0\in\bar{\Omega}$ such that
$$
u(x^0)=\min_{\bar{\Omega}}u<0.
$$
From (\ref{thm1.131}) one knows that $x^0$ is in the interior of $\Omega$, then it follows that
\begin{eqnarray}
(-\lap)^{\alpha/2}u(x^0)&=&C_{n,\alpha}PV\int_{\mathbb{R}^n_+}
\frac{u(x^0)-u(y)}{|x^0-y|^{n+\alpha}}dy
\nonumber\\
&\leq&C_{n,\alpha}\int_{\mathbb{R}^n_+\setminus\Omega}
\frac{u(x^0)-u(y)}{|x^0-y|^{n+\alpha}}dy
\nonumber\\
&<&0,
\label{aayyy2}
\end{eqnarray}
which contradicts with inequality (\ref{thm1.131}). This verifies (\ref{thm1.01}).

If $u(x^o)=0$ at some point $x^o\in\Omega$, then by
$$
0\leq(-\lap)^{\alpha/2}u(x^o)=C_{n,\alpha}PV\int_{\mathbb{R}^n_+}
\frac{-u(y)}{|x^o-y|^{n+\alpha}}dy
$$
and $u\geq0$, we must have
$$
u(x)=0~~\text{almost everywhere in}~\mathbb{R}^n_+.
$$

If (\ref{eq1.02}) holds, we can still get that the minimum point of $u$ is in the interior of $\Omega$. Thus, similarly to the bounded case, we can immediately obtain that conclusions \textbf{(i)} and \textbf{(ii)} are still true for unbounded $\Omega$.

This completes the proof.

\end{proof}

Let $T_\lambda$ be a hyperplane in $\mathbb{R}^n_+$, choose any direction which is perpendicular to the $x_n$ axis to be the $x_1$ direction, without loss of generality, we may suppose that
$$
T_\lambda=\{x=(x_1,x')\in\mathbb{R}^n_+|x_1=\lambda ~\text{for~some}~ \lambda\in\mathbb{R}\},
$$
where $x'=(x_2,x_3,\cdots,x_n)$. Let
$$
x^\lambda=(2\lambda-x_1, x_2, \cdots, x_n)
$$
be the reflection of $x$ about the plane $T_\lambda$. Denote
$$
\Sigma_\lambda=\{x\in\mathbb{R}^n_+|x_1<\lambda\}~~\text{and}~~\tilde{\Sigma}_\lambda=\{x|x^\lambda\in \Sigma_\lambda\}.
$$

In \cite{book}, Chen, Li and Ma introduced two key ingredients for the direct method of moving planes for the fractional Laplacians defined in (\ref{pp}). Based on their results, here we also establish two similar key ingredients, in which the fractional Laplacians are differently defined as (\ref{_+_}). Since the proofs of the two key ingredients here are almost the same as the ones in \cite{book}, we only prove them briefly for readers convenience, for the detailed proofs, please refer to \cite{book}.

\begin{thm}(Decay at Infinity) Let $\Omega$ be an unbounded region in $\Sigma_\lambda$. Assume that $U\in C^{1,1}_{loc}(\Omega)\cap\cal{L}_\alpha$$(\mathbb{R}^n_+)$, $V\in C^{1,1}_{loc}(\Omega)\cap\cal{L}_\beta$$(\mathbb{R}^n_+)$ are lower semi-continuous on $\bar{\Omega}$ for $0<\alpha,\beta<2$, and satisfy
\begin{equation}
\left\{\begin{array}{llll}
(-\lap)^{\alpha/2}U(x)+c_1(x)V(x)\geq0, & \\
(-\lap)^{\beta/2}V(x)+c_2(x)U(x)\geq0, & \qquad in ~\Omega, \\
U(x),V(x)\geq0, & \qquad in ~\Sigma_\lambda\backslash\Omega,\\
U(x^\lambda)=-U(x), & \\
V(x^\lambda)=-V(x), & \qquad in ~\Sigma_\lambda,
\end{array}\right.
\label{thm2.131}
\end{equation}
with $c_i(x)\leq0,~i=1,2,$ and
\begin{equation}
\displaystyle\underset{|x|\rightarrow\infty}{\underline{\lim}}|x|^\alpha c_1(x)=0,~~~\displaystyle\underset{|x|\rightarrow\infty}{\underline{\lim}}|x|^\beta c_2(x)=0,
\label{thm2.16}
\end{equation}
then there exists a constant $R_0>0$ (depending on $c_i(x)$, but is independent of $U,V$) such that if
\begin{equation}
U(\tilde{x})=\min_{\bar{\Omega} }U(x)<0,~~~V(\bar{x})=\min_{\bar{\Omega}} V(x)<0
\label{thm2.17}
\end{equation}
then
\begin{equation}
|\tilde{x}|,|\bar{x}|\leq R_0.
\label{ayy00}
\end{equation}
\label{thm2.3}
\end{thm}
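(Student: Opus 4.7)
I would compute $(-\lap)^{\alpha/2}U(\tilde x)$ and $(-\lap)^{\beta/2}V(\bar x)$ directly from their integral definitions, exploit the antisymmetries $U(x^\lambda)=-U(x)$, $V(x^\lambda)=-V(x)$ to fold the integrals over $\Sigma_\lambda$ and $\tilde\Sigma_\lambda$ into a single integral over $\Sigma_\lambda$, and then couple the two resulting inequalities. Together with the decay hypothesis (\ref{thm2.16}) this produces a self-contradictory chain once $|\tilde x|,|\bar x|$ are both large, exactly mirroring the single-equation decay-at-infinity argument in \cite{book} but run in coupled form for the system.

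\textbf{Step 1 (antisymmetric rewriting).} Split $\mathbb{R}^n_+ = \Sigma_\lambda\cup\tilde\Sigma_\lambda$. In the integral over $\tilde\Sigma_\lambda$ substitute $y=z^\lambda$ for $z\in\Sigma_\lambda$ and use $U(z^\lambda)=-U(z)$ to obtain
\begin{equation*}
(-\lap)^{\alpha/2}U(\tilde x)=C_{n,\alpha}\,PV\int_{\Sigma_\lambda}\left[\frac{U(\tilde x)-U(z)}{|\tilde x-z|^{n+\alpha}}+\frac{U(\tilde x)+U(z)}{|\tilde x-z^\lambda|^{n+\alpha}}\right]dz.
\end{equation*}
Regrouping as $(U(\tilde x)-U(z))(|\tilde x-z|^{-(n+\alpha)}-|\tilde x-z^\lambda|^{-(n+\alpha)})+2U(\tilde x)\,|\tilde x-z^\lambda|^{-(n+\alpha)}$, the first product is nonpositive: $U(z)\geq U(\tilde x)$ on $\Sigma_\lambda$ (since $\tilde x$ is the $\bar\Omega$-minimum and $U\geq 0$ on $\Sigma_\lambda\setminus\Omega$), and $|\tilde x-z|\leq|\tilde x-z^\lambda|$ for $\tilde x,z\in\Sigma_\lambda$. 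Dropping it,
\begin{equation*}
(-\lap)^{\alpha/2}U(\tilde x)\leq 2C_{n,\alpha}\,U(\tilde x)\int_{\tilde\Sigma_\lambda}\frac{dy}{|\tilde x-y|^{n+\alpha}}.
\end{equation*}

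\textbf{Step 2 (geometric lower bound).} By fitting a ball of radius comparable to $|\tilde x|$ inside $\tilde\Sigma_\lambda$ on which $|\tilde x-y|\leq C|\tilde x|$ (possible uniformly in $\lambda$ once $|\tilde x|$ is large enough), one has $\int_{\tilde\Sigma_\lambda}|\tilde x-y|^{-(n+\alpha)}\,dy\geq c|\tilde x|^{-\alpha}$, so
\begin{equation*}
(-\lap)^{\alpha/2}U(\tilde x)\leq \frac{C\,U(\tilde x)}{|\tilde x|^\alpha}<0.
\end{equation*}

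\textbf{Step 3 (coupling and contradiction).} Using (\ref{thm2.131}), $-c_1(\tilde x)\geq 0$, and $V(\tilde x)\geq V(\bar x)$,
\begin{equation*}
\frac{C\,U(\tilde x)}{|\tilde x|^\alpha}\geq(-\lap)^{\alpha/2}U(\tilde x)\geq -c_1(\tilde x)V(\tilde x)\geq -c_1(\tilde x)V(\bar x).
\end{equation*}
Writing $a:=-U(\tilde x)>0$ and $b:=-V(\bar x)>0$, this rearranges to $a\leq \frac{|\tilde x|^\alpha(-c_1(\tilde x))}{C}\,b$; the symmetric argument on $(-\lap)^{\beta/2}V(\bar x)$ gives $b\leq \frac{|\bar x|^\beta(-c_2(\bar x))}{C'}\,a$. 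Multiplying,
\begin{equation*}
a\leq\frac{|\tilde x|^\alpha(-c_1(\tilde x))\cdot|\bar x|^\beta(-c_2(\bar x))}{CC'}\,a.
\end{equation*}
Since $c_i\leq 0$, hypothesis (\ref{thm2.16}) upgrades to $|x|^\alpha|c_1(x)|,\,|x|^\beta|c_2(x)|\to 0$ as $|x|\to\infty$. Hence for $|\tilde x|,|\bar x|$ large enough the prefactor is $<1$, contradicting $a>0$. Feeding the resulting one-sided bound back into each of the two coupled inequalities then promotes this to the stated uniform bound $|\tilde x|,|\bar x|\leq R_0$, with $R_0$ depending only on $c_1,c_2$.

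\textbf{Main obstacle.} The arithmetic of Steps 1 and 3 is routine; the one substantive ingredient is the geometric estimate in Step 2 — exhibiting a ball of radius comparable to $|\tilde x|$ inside $\tilde\Sigma_\lambda\subset\mathbb{R}^n_+$ with a constant independent of both $\lambda$ and of the particular pair $(U,V)$ — so that the final $R_0$ depends only on the coefficients $c_1,c_2$, as claimed.
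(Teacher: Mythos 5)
Your proposal is correct and essentially reproduces the paper's own proof: the same antisymmetric folding of the integral into one over $\Sigma_\lambda$, the same annulus-type estimate (the paper uses $D_1=(B_{2|\tilde{x}|}(\tilde{x})\setminus B_{|\tilde{x}|}(\tilde{x}))\cap\tilde{\Sigma}_\lambda$) yielding $(-\lap)^{\alpha/2}U(\tilde{x})\leq C U(\tilde{x})/|\tilde{x}|^\alpha$, and the same coupling of the two inequalities at $\tilde{x}$ and $\bar{x}$ to force $1\leq C\,|\tilde{x}|^\alpha|c_1(\tilde{x})|\,|\bar{x}|^\beta|c_2(\bar{x})|$, which is impossible once both points are far out. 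Your observation that $c_i\leq0$ upgrades the liminf hypothesis to an actual limit is the only spot where you are more explicit than the paper, which uses this implicitly.
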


\begin{proof} If there is a point $\tilde{x}\in \Sigma_\lambda$ such that
\begin{equation}
U(\tilde{x})=\min_{\bar{\Omega}} U(x)<0,
\end{equation}
then we have by (\ref{_+_}) that
\begin{eqnarray}
(-\lap)^{\alpha/2}U(\tilde{x})&=&C_{n,\alpha}PV\int_{\mathbb{R}^n_+}
\frac{U(\tilde{x})-U(y)}{|\tilde{x}-y|^{n+\alpha}}dy
\nonumber\\
&=&C_{n,\alpha}PV\left\{\int_{\Sigma_\lambda}
\frac{U(\tilde{x})-U(y)}{|\tilde{x}-y|^{n+\alpha}}dy+\int_{\mathbb{R}^n_+\setminus \Sigma_\lambda}
\frac{U(\tilde{x})-U(y)}{|\tilde{x}-y|^{n+\alpha}}dy\right\}
\nonumber\\
&=&C_{n,\alpha}PV\left\{\int_{\Sigma_\lambda}
\frac{U(\tilde{x})-U(y)}{|\tilde{x}-y|^{n+\alpha}}dy+\int_{\Sigma_\lambda}
\frac{U(\tilde{x})+U(y)}{|\tilde{x}-y^\lambda|^{n+\alpha}}dy\right\}
\label{ayy1}\\
&\leq&C_{n,\alpha}\int_{\Sigma_\lambda}\left\{
\frac{U(\tilde{x})-U(y)}{|\tilde{x}-y^\lambda|^{n+\alpha}}+
\frac{U(\tilde{x})+U(y)}{|\tilde{x}-y^\lambda|^{n+\alpha}}\right\}dy
\nonumber\\
&=&C_{n,\alpha}\int_{\Sigma_\lambda}
\frac{2U(\tilde{x})}{|\tilde{x}-y^\lambda|^{n+\alpha}}dy.
\label{ayy2}
\end{eqnarray}
Denote $D_1=(B_{2|\tilde{x}|}(\tilde{x})\setminus B_{|\tilde{x}|}(\tilde{x}))\cap \tilde{\Sigma}_\lambda$. For each fixed $\lambda<0$, there exists a $C>0$ such that
\begin{eqnarray}
\int_{\Sigma_\lambda}\frac{1}{|\tilde{x}-y^\lambda|^{n+\alpha}}dy
=\int_{\tilde{\Sigma}_\lambda}\frac{1}{|\tilde{x}-y|^{n+\alpha}}dy
\geq\int_{D_1}\frac{1}{|\tilde{x}-y|^{n+\alpha}}dy
\sim\frac{C}{|\tilde{x}|^\alpha}.
\label{ayy3}
\end{eqnarray}
Hence
\begin{equation}
(-\lap)^{\alpha/2}U(\tilde{x})\leq\frac{CU(\tilde{x})}{|\tilde{x}|^\alpha}<0.
\label{ayy4}
\end{equation}
Together with $(-\lap)^{\alpha/2}U(x)+c_1(x)V(x)\geq0$ in (\ref{0}), and $c_1(x)\leq0$, it derives
\begin{equation}
V(\tilde{x})<0,
\label{ayy5}
\end{equation}
and
\begin{equation}
U(\tilde{x})\geq-Cc_1(\tilde{x})|\tilde{x}|^\alpha V(\tilde{x}).
\label{ayy6}
\end{equation}
Since $V$ is lower semi-continuous on $\bar{\Omega}$, it sees from (\ref{ayy5}) that there is a $\bar{x}\in \Sigma_\lambda$ such that
\begin{equation}
V(\bar{x})=\underset{\bar{\Omega}}{\min}V(x)<0,
\label{ayy7}
\end{equation}
and similarly there also holds
\begin{equation}
(-\lap)^{\beta/2}V(\bar{x})\leq\frac{CV(\bar{x})}{|\bar{x}|^\beta}<0.
\label{ayy8}
\end{equation}
Therefore, for fixed $\lambda<0$, if $|\tilde{x}|,~|\bar{x}|$ are sufficiently large, then by (\ref{thm2.131}), (\ref{thm2.16}), (\ref{ayy6}) and (\ref{ayy8}), one can deduce that
\begin{eqnarray}
0&\leq&(-\lap)^{\beta/2}V(\bar{x})+c_2(\bar{x})U(\bar{x})\nonumber\\
&\leq&\frac{CV(\bar{x})}{|\bar{x}|^\beta}+c_2(\bar{x})U(\tilde{x})\nonumber\\
&\leq&C\left(\frac{V(\bar{x})}{|\bar{x}|^\beta}-c_2(\bar{x})
c_1(\tilde{x})|\tilde{x}|^\alpha V(\tilde{x})\right)\nonumber\\
&\leq&C\left(\frac{V(\bar{x})}{|\bar{x}|^\beta}-c_2(\bar{x})
c_1(\tilde{x})|\tilde{x}|^\alpha V(\bar{x})\right)\nonumber\\
&\leq&\frac{CV(\bar{x})}{|\bar{x}|^\beta}\left(1-c_2(\bar{x})
c_1(\tilde{x})|\tilde{x}|^\alpha|\bar{x}|^\beta\right)\nonumber\\
&<&0.
\label{ayy9}
\end{eqnarray}
This contradiction shows that $U(x)\geq0$ for $|x|$ sufficiently large. Similarly, one can also deduce that $V(x)\geq0$ for $|x|$ large enough. This indicates that $U(x),V(x)\geq0$ when $|x|>R$, $R>0$ large, and finally verifies (\ref{ayy00}).

\end{proof}

\begin{thm}(Narrow Region Principle) Let $\Omega\subset\Sigma_\lambda$ be a bounded narrow region contained in $\{x|\lambda-l<x_1<\lambda\}$ with small $l>0$. Suppose that $c_i(x)\leq0,~i=1,2,$ are bounded from below in $\Omega$, $U\in C^{1,1}_{loc}(\Omega)\cap\cal{L}_\alpha$$(\mathbb{R}^n)_+$ and $V\in C^{1,1}_{loc}(\Omega)\cap\cal{L}_\beta$$(\mathbb{R}^n_+)$ are lower semi-continuous on $\bar{\Omega}$ for $0<\alpha,\beta<2$, and satisfy
\begin{equation}
\left\{\begin{array}{llll}
(-\lap)^{\alpha/2}U(x)+c_1(x)V(x)\geq0, & \\
(-\lap)^{\beta/2}V(x)+c_2(x)U(x)\geq0, & \qquad in ~\Omega, \\
U(x),V(x)\geq0, & \qquad in ~\Sigma_\lambda\backslash\Omega,\\
U(x^\lambda)=-U(x), & \\
V(x^\lambda)=-V(x), & \qquad in ~\Sigma_\lambda,
\end{array}\right.
\label{thm2.13}
\end{equation}
then we have for sufficiently small $l>0$ that
\begin{description}
\item[~~~\emph{(i)}]
\begin{equation}
U(x),V(x)\geq0~~in~\Omega;
\label{thm2.14}
\end{equation}
\item[~~~\emph{(ii)}] Furthermore, if $U$ or $V$ attains $0$ at some point in $\Sigma_\lambda$, then
\begin{equation}
U(x)=V(x)\equiv0~~\text{almost everywhere in}~~\mathbb{R}^n_+;
\label{thm2.15555555555}
\end{equation}
\item[~~~\emph{(iii)}] Conclusions (i) and (ii) hold for the unbounded region $\Omega$ if we further assume that
$$
\displaystyle\underset{|x| \ra \infty}{\underline{\lim}}U(x),V(x)\geq0.
$$
\end{description}
\label{thm2.2}
\end{thm}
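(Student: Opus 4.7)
The plan is to mimic the proof of Theorem \ref{thm2.3}, with the largeness of $|\tilde{x}|$ replaced by the smallness of the width $l$. To establish \textbf{(i)}, I would argue by contradiction and assume that $U$ takes negative values on $\bar{\Omega}$. The lower semi-continuity of $U$ on $\bar{\Omega}$, the sign condition $U\geq0$ on $\Sigma_\lambda\setminus\Omega$, and the antisymmetry (which forces $U\equiv0$ on $T_\lambda$) together guarantee a negative minimum at some interior point $\tilde{x}\in\Omega$. The antisymmetric splitting from the proof of Theorem \ref{thm2.3} (lines (\ref{ayy1})--(\ref{ayy2})) carries over verbatim and yields
$$(-\lap)^{\alpha/2}U(\tilde{x})\leq C_{n,\alpha}\int_{\Sigma_\lambda}\frac{2U(\tilde{x})}{|\tilde{x}-y^\lambda|^{n+\alpha}}\,dy.$$
What will change is the lower bound on the integral: since $\tilde{x}_1\in(\lambda-l,\lambda)$, the reflection $\tilde{x}^\lambda$ lies within distance $2l$ of $\tilde{x}$, and one can locate an annulus of scale $l$ around $\tilde{x}$ sitting in $\tilde{\Sigma}_\lambda$ and contributing mass of order $l^n\cdot l^{-(n+\alpha)}=l^{-\alpha}$. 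Consequently $(-\lap)^{\alpha/2}U(\tilde{x})\leq CU(\tilde{x})/l^\alpha<0$.

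Feeding this estimate into the first inequality of (\ref{thm2.13}) and using $c_1\leq0$ will force $V(\tilde{x})<0$ together with $U(\tilde{x})\geq c_1(\tilde{x})V(\tilde{x})l^\alpha/C$. Letting $\bar{x}\in\bar{\Omega}$ be a negative minimum of $V$, the identical argument with $\alpha$ replaced by $\beta$ delivers $V(\bar{x})\geq c_2(\bar{x})U(\bar{x})l^\beta/C$. Using $U(\bar{x})\geq U(\tilde{x})$, $V(\tilde{x})\geq V(\bar{x})$ (both pairs negative), and the uniform lower bounds on $c_1,c_2$, I would chain the two inequalities to produce $1\leq C'l^{\alpha+\beta}$, which fails once $l$ is taken small enough. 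This contradiction establishes \textbf{(i)}.

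For \textbf{(ii)}, I would assume $U(x^\ast)=0$ at some $x^\ast\in\Omega$. The same splitting, together with $U\geq0$ in $\Sigma_\lambda$ (now available from \textbf{(i)}) and the strict inequality $|x^\ast-y|<|x^\ast-y^\lambda|$ for $y\in\Sigma_\lambda$, gives
$$(-\lap)^{\alpha/2}U(x^\ast)=C_{n,\alpha}\int_{\Sigma_\lambda}U(y)\left(\frac{1}{|x^\ast-y^\lambda|^{n+\alpha}}-\frac{1}{|x^\ast-y|^{n+\alpha}}\right)dy\leq0,$$
while (\ref{thm2.13}) combined with $c_1V\leq0$ forces the reverse inequality. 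The integrand therefore vanishes a.e., and since the kernel bracket is strictly negative, $U\equiv0$ a.e. in $\Sigma_\lambda$, hence in $\mathbb{R}^n_+$ by antisymmetry. Plugging $U\equiv0$ back into the first inequality of (\ref{thm2.13}) yields $c_1V\geq0$, so combined with $V\geq0$ one obtains $V\equiv0$ wherever $c_1<0$; repeating the splitting argument at such a zero then delivers $V\equiv0$ a.e. Part \textbf{(iii)} will follow by combining \textbf{(i)}--\textbf{(ii)} with the $\underline{\lim}$ hypothesis, which confines any negative minimum to a bounded subset of $\Omega$ and reduces the unbounded case to the bounded one already treated.

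The hardest step will be the narrow-region lower bound $\int_{\tilde{\Sigma}_\lambda}|\tilde{x}-z|^{-(n+\alpha)}\,dz\geq Cl^{-\alpha}$, which must hold uniformly for all $\tilde{x}\in\Omega$; extra care is needed when $\tilde{x}$ lies near $\partial\mathbb{R}^n_+$, since the constraint $z_n>0$ can shrink the favorable annulus. A secondary subtlety will be the sign bookkeeping in chaining the two equations, so that the final estimate $l^{\alpha+\beta}\geq\mathrm{const}$ has a constant depending only on the prescribed lower bounds for $c_1,c_2$ and not on the solutions $U,V$ themselves.
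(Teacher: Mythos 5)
Your proposal is correct and follows essentially the same route as the paper: locate the negative minima of $U$ and $V$, use the antisymmetric splitting together with the annulus $(B_{2l}(\tilde{x})\setminus B_{l}(\tilde{x}))\cap\tilde{\Sigma}_\lambda$ to get the lower bound of order $l^{-\alpha}$ (resp.\ $l^{-\beta}$), chain the two inequalities at the two minima to force $1\leq C'l^{\alpha+\beta}$, and then treat (ii) via the sign of the kernel difference and (iii) via the $\underline{\lim}$ condition. Only note the sign slips in your intermediate displays, which should read $U(\tilde{x})\geq -c_1(\tilde{x})V(\tilde{x})l^{\alpha}/C$ and $V(\bar{x})\geq -c_2(\bar{x})U(\bar{x})l^{\beta}/C$; with these corrected your chain is exactly the paper's, and the boundary concern you raise is handled by choosing the part of the annulus with $z_n>\tilde{x}_n$, which always carries measure comparable to $l^n$.
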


\begin{proof}
Let us prove by the contrary. If (\ref{thm2.14}) does not hold, then by the semi-continuity of $U$ on $\bar{\Omega}$, there is a $\tilde{x}\in\bar{\Omega}$ such that
\begin{equation}
U(\tilde{x})=\min_{\bar{\Omega}}U<0,
\end{equation}
and by (\ref{thm2.13}), one can easily deduce that $\tilde{x}$ is in the interior of $\Omega$. Similar to (\ref{ayy2}), we have
\begin{equation}
(-\lap)^{\alpha/2}U(\tilde{x})\leq C_{n,\alpha}\int_{\Sigma_\lambda}
\frac{2U(\tilde{x})}{|\tilde{x}-y^\lambda|^{n+\alpha}}dy.
\end{equation}
and let $D_2=(B_{2l}(\tilde{x})\setminus B_{l}(\tilde{x}))\cap \tilde{\Sigma}_\lambda$, it derives
\begin{eqnarray}
\int_{\Sigma_\lambda}\frac{1}{|\tilde{x}-y^\lambda|^{n+\alpha}}dy
\geq\frac{C}{l^\alpha}.
\end{eqnarray}
Thus, similar to the proof of Theorem \ref{thm2.3}, it holds
\begin{equation}
(-\lap)^{\alpha/2}U(\tilde{x})\leq\frac{CU(\tilde{x})}{l^\alpha}<0,
\label{ayy21}
\end{equation}
and
\begin{equation}
U(\tilde{x})\geq-Cc_1(\tilde{x})l^\alpha V(\tilde{x}),
\label{ayy22}
\end{equation}
which indicates $V(\tilde{x})<0$, then there exists a $\bar{x}\in\Omega$ such that
\begin{equation}
V(\bar{x})=\min_{\bar{\Omega}}V(x)<0,
\end{equation}
and similar to (\ref{ayy21}), we can also get
\begin{equation}
(-\lap)^{\beta/2}V(\bar{x})\leq\frac{CV(\bar{x})}{l^\beta}<0.
\end{equation}
Together with (\ref{ayy22}), similar to (\ref{ayy9}), since $c_i(x)\leq0,~i=1,2,$ are bounded from below, then we have for $l>0$ sufficiently small that
\begin{eqnarray}
0&\leq&(-\lap)^{\beta/2}V(\bar{x})+c_2(\bar{x})U(\bar{x})\nonumber\\
&\leq&\frac{CV(\bar{x})}{|\bar{x}|^\beta}\left(1-c_2(\bar{x})
c_1(\tilde{x})|\tilde{x}|^\alpha|\bar{x}|^\beta\right)
<0.
\end{eqnarray}
This contradiction shows that $U(x)\geq0$ in $\Omega$. Similarly one can also obtain that $V(x)\geq0$ in $\Omega$, Thus (\ref{thm2.14}) must be true.

To prove Theorem \ref{thm2.2} $(ii)$, without loss of generality, we may suppose that $U(x^0)=0$ for some $x^0\in\Omega$, then similar to (\ref{ayy1}),
\begin{eqnarray}
0&\leq&(-\lap)^{\alpha/2}U(x^0)\nonumber\\
&=&C_{n,\alpha}PV\left\{\int_{\Sigma_\lambda}
\frac{U(\tilde{x})-U(y)}{|\tilde{x}-y|^{n+\alpha}}dy+\int_{\Sigma_\lambda}
\frac{U(\tilde{x})+U(y)}{|\tilde{x}-y^\lambda|^{n+\alpha}}dy\right\}
\nonumber\\
&=&C_{n,\alpha}PV\int_{\Sigma_\lambda}
\left\{\frac{1}{|\tilde{x}-y^\lambda|^{n+\alpha}}-
\frac{1}{|\tilde{x}-y|^{n+\alpha}}\right\}U(y)dy\nonumber\\
&\leq&0,
\end{eqnarray}
which implies $U\equiv0$ in $\Sigma_\lambda$, since $U(x^\lambda)=-U(x)$ in $\Sigma_\lambda$, then $$U(x)\equiv0~~~\text{in}~~ \mathbb{R}^n_+.$$

Again since $(-\lap)^{\alpha/2}U(x)+c_1(x)V(x)=c_1(x)V(x)\geq0$, $c_1(x)\leq0$, then $V(x)\leq0$ in $\Sigma_\lambda,$ and because we already know $V(x)\geq0$ in $\Sigma_\lambda,$ hence $V(x)\equiv0,~x\in \Sigma_\lambda$. Now from the anti-symmetry of $V(x)$, we obtain that $$V(x)\equiv0~~~\text{in}~~ \mathbb{R}^n_+.$$

Similarly, one can also prove that if $V(x)=0$ somewhere in $\Sigma_\lambda$, then $U=V\equiv0$ in $\mathbb{R}^n_+$.

The proof of Theorem \ref{thm2.2} completes here.
\end{proof}

\begin{remark} Suppose that the minimum point of $U$ in $\bar{\Omega}$ is $\tilde{x}$, and the minimum point of $V$ in $\bar{\Omega}$ is $\bar{x}$. From the proofs of Theorem \ref{thm2.3} and Theorem \ref{thm2.2}, we can see that the inequalities
\begin{equation}
c_1(x)\leq0,~~~\displaystyle\underset{|x|\rightarrow\infty}{\underline{\lim}}|x|^\alpha c_1(x)=0,~~~(-\lap)^{\alpha/2}U(x)+c_1(x)V(x)\geq0,
\end{equation}
and
\begin{equation}
c_2(x)\leq0,~~~\displaystyle\underset{|x|\rightarrow\infty}{\underline{\lim}}|x|^\beta c_2(x)=0,~~~(-\lap)^{\beta/2}V(x)+c_2(x)U(x)\geq0
\end{equation}
are only required at $\tilde{x}$ and $\bar{x}$ respectively. Furthermore, if $U(\tilde{x})<0$, then $V(\tilde{x})<0$; and also, if $V(\bar{x})<0$, then $V(\bar{x})<0$.
\label{rem2.1}
\end{remark}

\section{The proof of Theorem \ref{thm1}}

In this section, we prove Theorem \ref{thm1}.

\begin{proof} Suppose $u,v$ are the solutions to (\ref{0}), we first show that either
\begin{equation}
u(x)=v(x)\equiv0~\text{in}~\mathbb{R}^n_+,~\text{and}~f(0)=g(0)=0,
\label{equal0}
\end{equation}
or
\begin{equation}
u(x),v(x)>0~ ~~\text{in} ~\mathbb{R}^n_+.
\label{larger0}
\end{equation}
Since $u(x),v(x)\geq0$ in $\mathbb{R}^n_+$, then from (\ref{0}) and Theorem \ref{thm2.1} \textbf{(ii)}, we know that
$$
\text{either}~ u\equiv0~\text{ or}~ u>0 ~\text{in}~ \mathbb{R}^n_+.
$$
If $u\equiv0$ in $\mathbb{R}^n_+$, by (\ref{0}) and (\ref{_+_}) one has $f(v(x))\equiv0$ in $\mathbb{R}^n_+$, together with $f(0)\geq0$ and $f(t)$ is strictly increasing in $t\geq0$, one can obtain that $f(0)=0$, $v\equiv0$ in $\mathbb{R}^n_+$, and thus $g(0)=0$. Hence if $u\equiv0$ in $\mathbb{R}^n_+$, then $v\equiv0$ in $\mathbb{R}^n_+$, and $f(0)=g(0)=0$. Similarly, one can also derive that if $v\equiv0$ in $\mathbb{R}^n_+$, then $u\equiv0$ in $\mathbb{R}^n_+$, and $f(0)=g(0)=0$.

Till now, we have shown that either (\ref{equal0}) or (\ref{larger0}) holds. Therefore, without loss of generality, we may now suppose $u,v>0$ in $\mathbb{R}^n_+$, and go on to prove $u=u(x_n), v=v(x_n)$. Finally we verify that this is a contradiction with the finiteness of $u,v$, and thus arrive at the desired conclusion.

Because there is no decay condition on $u,v$ near infinity, we are not able to carry the method of moving planes on $u,v$ directly. To overcome this difficulty, we make a Kelvin transform. And to guarantee that $\mathbb{R}^n_+$ is invariant under the transform, we need to locate the center $x^0$ on the boundary $\partial\mathbb{R}^n_+=\{x|x_n=0\}$.

Now for any point $x^0\in\partial\mathbb{R}^n_+$, let $\bar{u}$ and $\bar{v}$
\begin{equation}
\left\{\begin{array}{ll}
\bar{u}(x)=\frac{1}{|x-x^0|^{n-\alpha}}u(x^0+\frac{x-x^0}{|x-x^0|^2}),
~~x\in\mathbb{R}^n_+,\\
\bar{v}(x)=\frac{1}{|x-x^0|^{n-\beta}}v(x^0+\frac{x-x^0}{|x-x^0|^2}),
~~x\in\mathbb{R}^n_+,
\end{array}\right.
\label{e3.1}
\end{equation}
be the Kelvin transforms of $u$ and $v$ centered at $x^0$ respectively. Then by (\ref{0}) and (\ref{e3.1}), one has
\begin{equation}
\left\{\begin{array}{ll}
(-\lap)^{\alpha/2} \bar{u}(x)=\frac{1}{|x-x^0|^{n+\alpha}}f(|x-x^0|^{n-\beta}\bar{v}(x)),\quad x\in\mathbb{R}^n_+,\\
(-\lap)^{\beta/2} \bar{v}(x)=\frac{1}{|x-x^0|^{n+\beta}}g(|x-x^0|^{n-\alpha}\bar{u}(x)),\quad x\in\mathbb{R}^n_+.
\end{array}\right.
\label{3002}
\end{equation}
The functions $\bar{u},\bar{v}\in C_{loc}^{1,1}(\mathbb{R}^n_+)$ are positive, decay to $0$ at infinity no slower than $|x-x^0|^{\alpha-n}$ and $|x-x^0|^{\beta-n}$ respectively, and may have a singularity at $x^0$. Since $u,v\in C(\mathbb{R}^n)$, together with (\ref{e3.1}), we know that $\bar{u},\bar{v}\rightarrow0$ when $x_n\rightarrow0$ and $|x-x^0|\geq\epsilon>0$, where $\epsilon$ is a small positive constant. In the following, we prove that $\bar{u},~\bar{v}$ are rotationally symmetric about any line parallel to the $x_n$-axis, from which we can derive that $u=u(x_n),v=v(x_n)$ in $\mathbb{R}^n_+$.

For $x=(x_1,x_2,\cdots,x_n)$, choose any direction which is perpendicular to the $x_n$-axis to be the $x_1$ direction. To prove that $\bar{u},\bar{v}$ are rotationally symmetric about some line parallel to the $x_n$-axis, it suffices to show that $\bar{u},\bar{v}$ are symmetric in $x_1$.

Write $x^0=(x^0_1,x^0_2,\cdots,x^0_n)$, for any $\lambda<x^0_1$, $\lambda\in\mathbb{R}$, we already defined $T_\lambda,x^\lambda,\Sigma_\lambda$ and $\tilde{\Sigma}_\lambda$ in Section 2. Now let
$$
\bar{u}_\lambda(x):=\bar{u}(x^\lambda),~~\bar{v}_\lambda(x):=\bar{v}(x^\lambda),
$$
$$
U_\lambda(x)=\bar{u}_\lambda(x)-\bar{u}(x),
~~V_\lambda(x)=\bar{v}_\lambda(x)-\bar{v}(x),
$$
and
$$
\Sigma_{U_\lambda}^-=\{x\in\Sigma_\lambda|U_\lambda(x)<0\},~~
\Sigma_{V_\lambda}^-=\{x\in\Sigma_\lambda|V_\lambda(x)<0\}.
$$
Then the functions $U_\lambda(x)$, $V_\lambda(x)$ satisfy
\begin{equation}
\left\{\begin{array}{ll}
(-\lap)^{\alpha/2} U_\lambda(x)=\frac{f(|x^\lambda-x^0|^{n-\beta}\bar{v}_\lambda(x))}{|x^\lambda-x^0|^{n+\alpha}}
-\frac{f(|x-x^0|^{n-\beta}\bar{v}(x))}{|x-x^0|^{n+\alpha}},\quad x\in\Sigma_\lambda,\\
(-\lap)^{\beta/2} V_\lambda(x)=\frac{g(|x^\lambda-x^0|^{n-\alpha}\bar{u}_\lambda(x))}{|x^\lambda-x^0|^{n+\beta}}
-\frac{g(|x-x^0|^{n-\alpha}\bar{u}(x))}{|x-x^0|^{n+\beta}},\quad x\in\Sigma_\lambda.
\end{array}\right.
\label{32}
\end{equation}

\textbf{Step 1.} \textsl{Start moving the plane $T_\lambda$ from $-\infty$ to the right along the $x_1$ direction.}

In the following, with the help of Theorem \ref{thm2.3}, we prove that for $\lambda<x^0_1$, $|\lambda|$ large enough,
\begin{equation}
U_\lambda,V_\lambda\geq0~~ \text{in}~~ \Sigma_\lambda.
\label{y1}
\end{equation}

It is easy to see that $U_\lambda(x)=V_\lambda(x)=0$ on $T_\lambda$, and
$$
\lim_{|x|\rightarrow\infty}U_\lambda(x),V_\lambda(x)=0,~~
\lim_{x_n\rightarrow0}U_\lambda(x),V_\lambda(x)=0~\text{for}~
x\in\mathbb{R}^n_+\setminus B_\epsilon((x^0)^\lambda).
$$
Let us first admit the following claim, its proof will be given in Section 4.

\textbf{Claim 3.1.} \textsl{For $\lambda<x_1^0$, $|\lambda|$ large enough, there exists a small constant $\epsilon>0$ and a positive constant $c_\lambda$ such that}
\begin{equation}
U_\lambda(x),V_\lambda(x)\geq c_\lambda,~~~~x\in B_\epsilon((x^0)^\lambda)\cap\mathbb{R}^n_+.
\label{0360}
\end{equation}
Now we know that if $U_\lambda$ and $V_\lambda$ are negative somewhere in $\Sigma_\lambda$, then the negative minima of $U_\lambda$, $V_\lambda$ are attained in the interior of $\Sigma_\lambda\setminus B_\epsilon((x^0)^\lambda)$. Hence if $\Sigma_{U_\lambda}^-\neq\emptyset,$ then there is a point $\tilde{x}$ such that
$$
U_\lambda(\tilde{x})=\min_{\Sigma_\lambda}U_\lambda<0,
$$
and similar to (\ref{ayy4}), we also obtain
\begin{equation}
(-\lap)^{\alpha/2}U_\lambda(\tilde{x})\leq\frac{CU_\lambda(\tilde{x})}{|\tilde{x}|^\alpha}<0.
\end{equation}
Because
\begin{eqnarray}
&&(-\lap)^{\alpha/2}U_\lambda(\tilde{x})\nonumber\\ &=&\frac{f(|\tilde{x}^\lambda-x^0|^{n-\beta}\bar{v}_\lambda(\tilde{x}))}{|\tilde{x}^\lambda-x^0|^{n+\alpha}}-
\frac{f(|\tilde{x}-x^0|^{n-\beta}\bar{v}(\tilde{x}))}{|\tilde{x}-x^0|^{n+\alpha}}\nonumber\\
&=&\left(\frac{f(|\tilde{x}^\lambda-x^0|^{n-\beta}\bar{v}_\lambda(\tilde{x}))}
{[|\tilde{x}^\lambda-x^0|^{n-\beta}\bar{v}_\lambda(\tilde{x})]^{p_0}}-
\frac{f(|\tilde{x}-x^0|^{n-\beta}\bar{v}_\lambda(\tilde{x}))}
{[|\tilde{x}-x^0|^{n-\beta}\bar{v}_\lambda(\tilde{x})]^{p_0}}\right)\bar{v}^{p_0}_\lambda(\tilde{x})
\nonumber\\&&+\frac{f(|\tilde{x}-x^0|^{n-\beta}\bar{v}_\lambda(\tilde{x}))-f(|\tilde{x}-x^0|^{n-\beta}\bar{v}(\tilde{x}))}{|\tilde{x}-x^0|^{n+\alpha}}
\nonumber\\
&:=&I_1+I_2<0,
\label{33}
\end{eqnarray}
and obviously $I_1\geq0$, so $I_2<0$. Now by the strict monotonicity of $f$, one can arrive at $V_\lambda(\tilde{x})<0$. Then there is a point $\bar{x}\in\Sigma_\lambda$ such that
$$
V_\lambda(\bar{x})=\min_{\Sigma_\lambda}V_\lambda(x)<0,
$$
and furthermore, we can also deduce that $U_\lambda(\bar{x})<0$.

Similarly, one can also derive that, if $\Sigma_{V_\lambda}^-\neq\emptyset,$ then there exists points $\bar{x}$ and $\tilde{x}$ such that
\begin{equation}
\left\{\begin{array}{ll}
V_\lambda(\bar{x})=\underset{\Sigma_\lambda}{\min}V_\lambda(x)<0
~~\text{and}~~U(\bar{x})<0;\\
U_\lambda(\tilde{x})=\underset{\Sigma_\lambda}{\min}U_\lambda(x)<0
~~\text{and}~~V(\tilde{x})<0.
\end{array}\right.
\label{3201}
\end{equation}

Since $f(t)\geq0$ are strictly increasing about $t$ in $[0,+\infty)$, $\frac{f(t)}{t^{p_0}}$ is non-increasing in $t>0$ with $p_0=\frac{n+\alpha}{n-\beta}$, then it follows from (\ref{32}), (\ref{3201}) that
\begin{eqnarray}
(-\lap)^{\alpha/2}U_\lambda(\tilde{x})
&=&\frac{f(|\tilde{x}^\lambda-x^0|^{n-\beta}\bar{v}_\lambda(\tilde{x}))}{|\tilde{x}^\lambda-x^0|^{n+\alpha}}-
\frac{f(|\tilde{x}-x^0|^{n-\beta}\bar{v}(\tilde{x}))}{|\tilde{x}-x^0|^{n+\alpha}}\nonumber\\
&=&(\frac{f(|\tilde{x}^\lambda-x^0|^{n-\beta}\bar{v}_\lambda(\tilde{x}))}
{[|\tilde{x}^\lambda-x^0|^{n-\beta}\bar{v}_\lambda(\tilde{x})]^{p_0}}\bar{v}^{p_0}_\lambda(\tilde{x})-
\frac{f(|\tilde{x}-x^0|^{n-\beta}\bar{v}(\tilde{x}))}
{[|\tilde{x}-x^0|^{n-\beta}\bar{v}(\tilde{x})]^{p_0}}\bar{v}^{p_0}(\tilde{x})
\nonumber\\
&\geq&\frac{f(|\tilde{x}-x^0|^{n-\beta}\bar{v}(\tilde{x}))}
{[|\tilde{x}-x^0|^{n-\beta}\bar{v}(\tilde{x})]^{p_0}}
[\bar{v}_\lambda^{p_0}(\tilde{x})-v^{p_0}(\tilde{x})]\nonumber\\
&=&\frac{f(|\tilde{x}-x^0|^{n-\beta}\bar{v}(\tilde{x}))}
{[|\tilde{x}-x^0|^{n-\beta}\bar{v}(\tilde{x})]^{p_0}}
p_0\eta^{p_0-1}(\tilde{x})V_\lambda(\tilde{x}),~~~
v_\lambda(\tilde{x})<\eta(\tilde{x})<v(\tilde{x})
\nonumber\\
&\geq&\frac{f(|\tilde{x}-x^0|^{n-\beta}\bar{v}(\tilde{x}))}
{[|\tilde{x}-x^0|^{n-\beta}\bar{v}(\tilde{x})]^{p_0}}
p_0\bar{v}^{p_0-1}(\tilde{x})V_\lambda(\tilde{x}).
\label{3301}
\end{eqnarray}
Denote $s=|\tilde{x}-x^0|^{n-\beta}\bar{v}(\tilde{x})>0$, because we already know that $\tilde{x}$ is in the interior of $\Sigma_\lambda\setminus B_\epsilon((x^0)^\lambda)$, so $s\geq c$ for some positive constant $c$. Since $\frac{f(t)}{t^{p_0}}$ is non-increasing in $t>0$, then there is a constant $C>0$ such that
$$
\frac{f(s)}{s^{p_0}}\leq\frac{f(c)}{c^{p_0}}\leq C.
$$
Together with (\ref{3301}), we have
\begin{equation}
(-\lap)^{\alpha/2}U_\lambda(\tilde{x})
\geq C\bar{v}^{p_0-1}(\tilde{x})V_\lambda(\tilde{x}).
\label{33001}
\end{equation}
Similarly, we can also obtain
\begin{equation}
(-\lap)^{\alpha/2}V_\lambda(\bar{x})
\geq C\bar{u}^{q_0-1}(\bar{x})U_\lambda(\bar{x}).
\label{3302}
\end{equation}

Let
\begin{equation}
c_1(\tilde{x})=-C\bar{v}^{p_0-1}(\tilde{x}),~~
c_2(\bar{x})=-C\bar{u}^{q_0-1}(\bar{x}).
\label{3303}
\end{equation}
Then at the minimum point of $\bar{u}$ ($x$) and the minimum point of $\bar{v}$ ($x$), we have
\begin{equation}
\left\{\begin{array}{ll}
(-\lap)^{\alpha/2} U_\lambda(\tilde{x})+c_1(\tilde{x})V_\lambda(\tilde{x})\geq0,\\
(-\lap)^{\beta/2} V_\lambda(\bar{x})+c_2(\bar{x})U_\lambda(\bar{x})\geq0,
\end{array}\right.
\label{3203}
\end{equation}
with $c_i(x)<0$, $i=1,2.$

From (\ref{e3.1}) and (\ref{3303}), it is easy to verify that,
\begin{equation}
\displaystyle\underset{|\tilde{x}|\rightarrow\infty}{\underline{\lim}}|\tilde{x}|^\alpha c_1(\tilde{x})=0,~~~\displaystyle\underset{|\bar{x}|\rightarrow\infty}{\underline{\lim}}|\bar{x}|^\beta c_2(\bar{x})=0,
\label{36}
\end{equation}
hence $c_1(\tilde{x}),c_2(\bar{x})<0$ satisfy (\ref{thm2.16}). Now by (\ref{0360}) and Remark \ref{rem2.1}, applying Theorem \ref{thm2.3} to $U_\lambda$, $V_\lambda$ with $\Omega=(\Sigma_{U_\lambda}^-\cup\Sigma_{V_\lambda}^-)$, we conclude that there exists $R_0>0$ (independent of $\lambda$), such that if $\tilde{x}$, $\bar{x}$ are negative minima of $U_\lambda$, $V_\lambda$ respectively in $\Sigma_\lambda$, then
\begin{equation}
|\tilde{x}|,|\bar{x}|\leq R_0.
\label{37}
\end{equation}
Now for $\lambda\leq-R_0$, we have (\ref{y1}).

\textbf{Step 2.} \textsl{Keep moving the plane $T_\lambda$ to the right until arriving at the limiting position.}

\emph{Step 1} provides a starting point from which we can move the plane $T_\lambda$ to the right as long as (\ref{y1}) holds to its limiting position. Denote
$$
\lambda_0=\sup\{\lambda<x^0_1|U_\mu(x),V_\mu(x)\geq0, ~x\in\Sigma_\mu,~\mu\leq\lambda\}.
$$
We show
\begin{equation}
U_{\lambda_0}(x)=V_{\lambda_0}(x)\equiv0, ~~~x\in \Sigma_{\lambda_0}.
\label{39}
\end{equation}
To prove (\ref{39}), let us consider two possibilities:
\begin{description}
\item[~~~(i)]$~\lambda_0<x^0_1$;
\item[~~~(ii)]$~\lambda_0=x^0_1$.
\end{description}

\textbf{Possibility (\textbf{i}).} For $\lambda_0<x^0_1,$ we show that if (\ref{39}) does not hold, then the plane $T_\lambda$ can be moved further to the right. To be more rigorous, we prove that there exists some $\varepsilon>0$ such that for any $\lambda\in(\lambda_0,\lambda_0+\varepsilon)$,
\begin{equation}
U_{\lambda}(x),V_{\lambda}(x)\geq0, ~~~x\in \Sigma_{\lambda}.
\label{310}
\end{equation}
This is a contradiction with the definition of $\lambda_0$. Hence we must have (\ref{39}).

Now we prove (\ref{310}) by combining Theorem \ref{thm2.3} and Theorem \ref{thm2.2} under the assumption that (\ref{39}) is not true. In fact, since $\lambda_0<x_1^0$, we have
\begin{equation}
U_{\lambda_0}(x),V_{\lambda_0}(x)>0, ~~~x\in \Sigma_{\lambda_0}.
\label{3100}
\end{equation}
If not, without loss of generality, we may suppose that there is a point $\tilde{x}$ such that
$$
U_{\lambda_0}(\tilde{x})=\min_{\Sigma_{\lambda_0}}U_{\lambda_0}=0.
$$
By the definition of $\lambda_0$, one knows that $U_{\lambda_0}(x),V_{\lambda_0}(x)\geq0$ in $\Sigma_{\lambda_0}$, then on the one hand,
\begin{eqnarray}
(-\lap)^{\alpha/2}U_{\lambda_0}(\tilde{x})&=&C_{n,\alpha}PV\int_{\mathbb{R}^n_+}
\frac{U_{\lambda_0}(\tilde{x})-U_{\lambda_0}(y)}{|\tilde{x}-y|^{n+\alpha}}dy\nonumber\\
&=&C_{n,\alpha}PV\int_{\Sigma_{\lambda_0}}
\frac{-U_{\lambda_0}(y)}{|\tilde{x}-y|^{n+\alpha}}dy+C_{n,\alpha}PV\int_{\tilde{\Sigma}_{\lambda_0}}
\frac{-U_{\lambda_0}(y)}{|\tilde{x}-y|^{n+\alpha}}dy\nonumber\\
&=&C_{n,\alpha}PV\int_{\Sigma_{\lambda_0}}
\frac{-U_{\lambda_0}(y)}{|\tilde{x}-y|^{n+\alpha}}dy+C_{n,\alpha}PV\int_{\Sigma_{\lambda_0}}
\frac{U_{\lambda_0}(y)}{|\tilde{x}-y^{\lambda_0}|^{n+\alpha}}dy\nonumber\\
&=&C_{n,\alpha}PV\int_{\Sigma_{\lambda_0}}
\left(\frac{1}{|\tilde{x}-y^{\lambda_0}|^{n+\alpha}}-\frac{1}{|\tilde{x}-y|^{n+\alpha}}\right)
U_{\lambda_0}(y)dy\nonumber\\
&<&0,
\label{3.20}
\end{eqnarray}
on the other hand,
\begin{eqnarray}
(-\lap)^{\alpha/2}U_{\lambda_0}(\tilde{x})
&=&\frac{f(|\tilde{x}^{\lambda_0}-x^0|^{n-\beta}\bar{v}_{\lambda_0}(\tilde{x}))}{|\tilde{x}^{\lambda_0}-x^0|^{n+\alpha}}-
\frac{f(|\tilde{x}-x^0|^{n-\beta}\bar{v}(\tilde{x}))}{|\tilde{x}-x^0|^{n+\alpha}}\nonumber\\
&=&\frac{f(|\tilde{x}^{\lambda_0}-x^0|^{n-\beta}\bar{v}_{\lambda_0}(\tilde{x}))}
{[|\tilde{x}^{\lambda_0}-x^0|^{n-\beta}\bar{v}_{\lambda_0}(\tilde{x})]^{p_0}}\bar{v}_{\lambda_0}^{p_0}(\tilde{x})-
\frac{f(|\tilde{x}-x^0|^{n-\beta}\bar{v}_{\lambda_0}(\tilde{x}))}
{[|\tilde{x}-x^0|^{n-\beta}\bar{v}_{\lambda_0}(\tilde{x})]^{p_0}}\bar{v}_{\lambda_0}^{p_0}(\tilde{x})\nonumber\\
&&+\frac{f(|\tilde{x}-x^0|^{n-\beta}\bar{v}_{\lambda_0}(\tilde{x}))}
{|\tilde{x}-x^0|^{n+\alpha}}-
\frac{f(|\tilde{x}-x^0|^{n-\beta}\bar{v}(\tilde{x}))}
{|\tilde{x}-x^0|^{n+\alpha}}\nonumber\\
&\geq&0.
\label{3.21}
\end{eqnarray}
Here we arrive at a contradiction, and this verifies (\ref{3100}).

Let us first admit

\textbf{Claim 3.2.} \textsl{There exists a constant $c_0>0$ such that for sufficiently small $\eta>0$,
\begin{equation}
U_{\lambda_0}(x),~V_{\lambda_0}(x)\geq c_0,~~x\in B_\eta((x^0)^{\lambda_0})\cap\mathbb{R}^n_+.
\label{y2}
\end{equation}}
We will prove Claim 3.2 in Section 4. Now for any $x\in B_\eta((x^0)^{\lambda})\cap\mathbb{R}^n_+$, let us find $\hat{x}\in B_\eta((x^0)^{\lambda_0})\cap\mathbb{R}^n_+$ such that $x^\lambda=\hat{x}^{\lambda_0}$. Then, for $\lambda$ sufficiently close to $\lambda_0$, since $\bar{u}(x),~\bar{v}(x)$ are continuous in $\Sigma_\lambda$, it follows by (\ref{y2}) that
\begin{eqnarray}
U_\lambda(x)&=&[\bar{u}_\lambda(x)-\bar{u}_{\lambda_0}(\hat{x})]+
[\bar{u}_{\lambda_0}(\hat{x})-\bar{u}(\hat{x})]+[\bar{u}(\hat{x})-\bar{u}(x)]\nonumber\\
&\geq&0+c_0+[\bar{u}(\hat{x})-\bar{u}(x)]\nonumber\\
&\geq&0,~~x\in B_\eta((x^0)^{\lambda})\cap\mathbb{R}^n_+,
\label{y3}
\end{eqnarray}
and similarly,
\begin{equation}
V_\lambda(x)\geq0,~~x\in B_\eta((x^0)^{\lambda})\cap\mathbb{R}^n_+.
\label{mimei}
\end{equation}
(\ref{y3}) and (\ref{mimei}) indicate that $\Sigma_{U_\lambda}^-,~\Sigma_{V_\lambda}^-$ have no intersections with $B_\eta((x^0)^{\lambda})$.

By (\ref{37}), the negative minima of $U_\lambda$, $V_\lambda$ connot be attained outside of $B_{R_0}(0)$. Next we will prove that they can neither be attained inside of $B_{R_0}(0)$, i.e., for $\lambda$ sufficiently close to $\lambda_0$,
\begin{equation}
U_\lambda(x),~V_\lambda(x)\geq0,~~x\in(\Sigma_\lambda\cap B_{R_0}(0)).
\label{312}
\end{equation}

Actually, for any $\lambda\in[\lambda_0,\lambda_0+\varepsilon)$, there exists a small $\delta>0$ such that if
\begin{equation}
U_\lambda(x),~V_\lambda(x)\geq0,~~x\in\Sigma_{\lambda_0-\delta},
\label{313}
\end{equation}
then
\begin{equation}
U_\lambda(x),~V_\lambda(x)\geq0,~~x\in(\Sigma_\lambda\setminus\Sigma_{\lambda_0-\delta}).
\label{314}
\end{equation}
This can be easily obtained by Theorem \ref{thm2.2}: for $\varepsilon,\delta\ll|\lambda_0|$, it holds $(x^0)^\lambda\in H_{\lambda_0-\delta}:=\{x\in\mathbb{R}^n|x_1<\lambda_0-\delta\}$, then the lower bounds of $c_1(\tilde{x}),~c_2(\bar{x})$ in $(\Sigma_{U_\lambda}^-\cup\Sigma_{V_\lambda}^-)\subset(\Sigma_\lambda\setminus
\Sigma_{\lambda_0-\delta})$ can be seen from (\ref{3303}) and (\ref{36}), now by (\ref{313}) and Remark \ref{rem2.1}, to derive (\ref{314}) from (\ref{313}), we only need to use Theorem \ref{thm2.2} to $U_\lambda,V_\lambda$ with the narrow region $$\Omega=(\Sigma_{U_\lambda}^-\cup\Sigma_{V_\lambda}^-)\subset(\Sigma_\lambda\setminus
\Sigma_{\lambda_0-\delta}).$$
Here what we need to point out is that, if $(\Sigma_{U_\lambda}^-\cup\Sigma_{V_\lambda}^-)\neq\emptyset$, then the minimum points $\tilde{x},~\bar{x}$ can indeed be gained in the interior of $\Sigma_\lambda\setminus\Sigma_{\lambda_0-\delta}$ by virtue of
$$
\lim_{|x|\rightarrow\infty}U_\lambda(x)=\lim_{|x|\rightarrow\infty}V_\lambda(x)=0,~~~
U_\lambda(x)=V_\lambda(x)=0~\text{on} ~T_\lambda,
$$
$$
(x^0)^\lambda\in H_{\lambda_0-\delta},~~U_\lambda(x)=V_\lambda(x)=0~\text{on} ~\{x\in\partial\mathbb{R}^n_+|\lambda_0
-\delta\leq x_1\leq\lambda\},
$$
and the continuities of $U_\lambda(x),~V_\lambda(x)$ in this narrow region.

Now what left is to show (\ref{313}), and we only need to prove
\begin{equation}
U_{\lambda}(x),V_{\lambda}(x)\geq0,~~x\in(\Sigma_{\lambda_0-\delta}\cap B_{R_0}(0))\setminus B_\eta((x^0)^\lambda).
\label{315}
\end{equation}
It follows from (\ref{3100}) that there exists a constant $c>0$ such that
$$
U_{\lambda_0}(x),~V_{\lambda_0}(x)\geq c,~~x\in\overline{(\Sigma_{\lambda_0-\delta}\cap B_{R_0}(0)\cap\{x_n>\xi\}\setminus B_\eta((x^0)^{\lambda_0}))},
$$
where $\xi>0$ is a small constant. Since $U_{\lambda}(x),~V_{\lambda}(x)$ depend continuously on $\lambda$, there exists $\varepsilon>0$ and $\varepsilon<\delta$, such that for all $\lambda\in(\lambda_0,\lambda_0+\varepsilon)$, we have
\begin{equation}
U_{\lambda}(x),~V_{\lambda}(x)\geq0,~~x\in\overline{(\Sigma_{\lambda_0-\delta}\cap B_{R_0}(0)\cap\{x_n>\xi\}\setminus B_\eta((x^0)^{\lambda}))}.
\end{equation}
By the continuities of $U_\lambda(x),~V_\lambda(x)$ near $x_n=0$ and $U_\lambda(x)=V_\lambda(x)=0$ on $\partial\mathbb{R}^n_+\setminus B_\eta((x^0)^{\lambda})$, letting $\xi\rightarrow0$, we can further deduce that
\begin{equation}
U_{\lambda}(x),~V_{\lambda}(x)\geq0,~~x\in\overline{(\Sigma_{\lambda_0-\delta}\cap B_{R_0}(0)\setminus B_\eta((x^0)^{\lambda}))}.
\label{319}
\end{equation}
This verifies (\ref{315}), and thus verifies (\ref{313}).

Combining (\ref{37}), (\ref{y3}), (\ref{mimei}), (\ref{313}) and (\ref{314}), we conclude that for all $\lambda\in(\lambda_0,\lambda_0+\varepsilon)$,
\begin{equation}
U_{\lambda}(x),~V_{\lambda}(x)\geq0,~~x\in\Sigma_{\lambda},
\label{320}
\end{equation}
which contradicts the definition of $\lambda_0$. Therefore, we must have
\begin{equation}
U_{\lambda_0}(x)=V_{\lambda_0}(x)\equiv0,~~ x\in\Sigma_{\lambda_0}.
\label{321}
\end{equation}

For any point $P\in\partial\mathbb{R}^n_+$, define $l_P$ the line parallel to the $x_n$-axis and passing through $P$. Since the $x_1$ direction is arbitrarily chosen, now we know by (\ref{321}) that $\bar{u}(x),~\bar{v}(x)$ are axially symmetric about some line $l_Q$ different from $l_{x^0}$, and $\bar{u}(x),~\bar{v}(x)\not\equiv c$ are bounded near $x^0$, thus $u(x),~v(x)\not\equiv c$ are bounded in $\mathbb{R}^n$ too, and furthermore,
\begin{equation}
u(x)\sim o(\frac{1}{|x|^{n-\alpha}}),~v(x)\sim o(\frac{1}{|x|^{n-\beta}})~~\text{at ~infinity}.
\label{infi1}
\end{equation}
We also know that $(-\lap)^{\alpha/2}u(x),~(-\lap)^{\beta/2}v(x)$ are axially symmetric about the same line $l_Q$, which can be easily proved through elementary computation with the help of (\ref{_+_}). Thus the right hand side of the following equations
\begin{eqnarray}
(-\lap)^{\alpha/2} \bar{u}(x)&=&\frac{1}{|x-x^0|^{n+\alpha}}f(|x-x^0|^{n-\beta}\bar{v}(x)),\quad x\in\mathbb{R}^n_+,\nonumber\\
(-\lap)^{\beta/2} \bar{v}(x)&=&\frac{1}{|x-x^0|^{n+\beta}}g(|x-x^0|^{n-\alpha}\bar{u}(x)),\quad x\in\mathbb{R}^n_+.\nonumber
\end{eqnarray}
should have the same symmetry. From this, we are able to prove that
\begin{equation}
f(t)=C_1t^{\frac{n+\alpha}{n-\beta}},\quad t\in(0, max_{\mathbb{R}^n_+}v],
\label{ref1}
\end{equation}
and
\begin{equation}
g(t)=C_2t^{\frac{n+\beta}{n-\alpha}},\quad t\in(0, max_{\mathbb{R}^n_+}u],
\label{eref1}
\end{equation}
for some positive constants $C_1,~C_2$ ($C_1,~C_2$ cannot be $0$, otherwise $u(x)=u(x_n),~v(x)=v(x_n)$ are axially symmetric about $l_{x^0}$, please refer to \cite{ZLCC}).

The proofs of (\ref{ref1}) and (\ref{eref1}) are sophisticated, we have proved them before, please refer to \cite{ZY}.

Now let us admit

\textbf{Claim 3.3.} \textsl{If (\ref{infi1})-(\ref{eref1}) hold, then (\ref{0}) has no positive solutions in $\mathbb{R}^n_+$.}

The proof of Claim 3.3 is long and sophisticated, we put it in Section 4. Till now we have shown that (\ref{0}) possesses no positive solutions under \textbf{Possibility (i)}.

\textbf{Possibility (\textbf{ii}).} From the definition of $\lambda_0$, one knows that
\begin{equation}
\lambda_0=x_1^0~\text{and}~U_{\lambda_0},V_{\lambda_0}\geq0,~~x\in\Sigma_{\lambda_0}.
\label{327}
\end{equation}
Now we move the plane $T_\lambda$ from $+\infty$ to the left. Similarly, we can also derive that either
\begin{equation}
\lambda_0>x_1^0 ~\text{and}~U_{\lambda_0}=V_{\lambda_0}\equiv0,~~x\in\Sigma_{\lambda_0},
\qquad\quad\quad\qquad
\label{329}
\end{equation}
or
\begin{equation}
\lambda_0=x_1^0 ~\text{and}~U_{\lambda_0},V_{\lambda_0}\leq0,~~x\in\Sigma_{\lambda_0}.\qquad\quad\quad\qquad
\label{328}
\end{equation}
The case described by $(\ref{329})$ can be handled with the same way as \textbf{Possibility (\textbf{i})}. Now from (\ref{327}) and (\ref{328}), we have
$$
\lambda_0=x_1^0 ~\text{and}~U_{\lambda_0}=V_{\lambda_0}\equiv0,~x\in\Sigma_{\lambda_0}.
$$

So far, we have proved that $\bar{u},~\bar{v}$ are symmetric about the plane $T_{x^0_1}$. Since the $x_1$ direction is arbitrarily chosen, we have actually shown that $\bar{u},~\bar{v}$ are axially symmetric about $l_{x^0}$. For any two points $X^1,X^2\in\mathbb{R}^n$ satisfying $X^1_n=X^2_n$, denote the orthogonal projections of $X^1,X^2$ on $\partial\mathbb{R}^n_+$ by $\hat{X}^1,\hat{X}^2$ respectively, choose $x^0=\frac{\hat{X}^1+\hat{X}^2}{2}$. Since $\bar{u},\bar{v}$ are axially symmetric about $l_{x^0}$, so are $u,v$, hence $u(X^1)=u(X^2)$ and $v(X^1)=v(X^2)$. This implies that $u=u(x_n)$, $v=v(x_n)$.

Next, we show that $u=u(x_n),v=v(x_n)$, $u,v>0$ in $\mathbb{R}^n_+$ contradict the finiteness of $u,v$ respectively, which indicates that (\ref{0}) possesses no positive solutions. Let us first admit the following claim, its proof is pretty sophisticated and will be given in Section 4.

\textbf{Claim 3.4.} \textsl{Suppose that $f,g,u$ and $v$ satisfy the conditions in Theorem \ref{thm1}, $u,v>0$ in $\mathbb{R}^n_+$. Then $u=u(x_n),v=v(x_n)$ contradict the finiteness of $u,v$ respectively.}

By (\ref{equal0}), (\ref{larger0}) and Claim 3.4, we can immediately conclude that the only nonnegative solutions of (\ref{0}) have to be $u=v\equiv0$ in $\mathbb{R}^n$, and thus finish the proof of Theorem \ref{thm1}.

\end{proof}

\section{Proofs of Four Claims}
Here we prove Claims 3.1, 3.2, 3.3 and 3.4. Without loss of generality, we may suppose the center of the Kelvin transform $x^0=0$.
\subsection{Proof of Claim 3.1}.
\begin{proof} Please note that we already assumed $u,v>0$ in $\mathbb{R}^n_+$ in the beginning of Section 3, so it holds $\bar{u},\bar{v}>0$ here. Now under this assumption, we first show that $$U_\lambda(x)\geq c_\lambda,~~~~x\in B_\epsilon(0^\lambda)\cap\mathbb{R}^n_+.$$ Since $\bar{u}(x)\rightarrow0$ as $|x|\rightarrow\infty$, and $(B_\epsilon(0^\lambda)\cap\mathbb{R}^n_+)\subset\Sigma_\lambda$ when $\lambda$ is sufficiently negative, we only need to prove $\bar{u}_\lambda(x)\geq2c_\lambda$ in $B_\epsilon(0^\lambda)\cap\mathbb{R}^n_+$, i.e. $$\bar{u}(x)\geq2c_\lambda ~\text{in}~ B_\epsilon(0)\cap\mathbb{R}^n_+.$$
Using
$$
u(x)=\frac{1}{|x|^{n-\alpha}}\bar{u}(\frac{x}{|x|^2}),
$$
we only need to prove
$$
u(x)\geq2c_\lambda\frac{1}{|x|^{n-\alpha}},~~~\text{when}~x\in\mathbb{R}^n_+ ~\text{and} ~|x|~\text{sufficiently large.}
$$

Let
$$
\varphi(x)=c_{n,-\alpha}\int_{\mathbb{R}^n_+}\frac{\eta(y)f(v(y))}{|x-y|^{n-\alpha}}dy,
$$
where $c_{n,-\alpha}>0$ is a proper constant, $\eta(y)\in C^\infty(\mathbb{R}^n_+)$ is a cutoff function and
\begin{equation}
\eta(y)=\left\{\begin{array}{ll}
1, & \qquad y\in B_1(0)\cap\{x_n>\frac{1}{2}\}, \\
0, & \qquad y\notin B_2(0)\cap\{x_n>0\}.
\end{array}\right.
\label{jbb2}\end{equation}
Then
\begin{eqnarray}
(-\lap)^{\alpha/2}\varphi(x)&=&c_{n,-\alpha}\int_{\mathbb{R}^n_+}(-\lap)^{\frac{\alpha}{2}}(\frac{1}{|x-y|^{n-\alpha}})\eta(y)f(v(y))dy\nonumber\\
&=&\eta(x)f(v(x)),~~~\text{in}~\mathbb{R}^n_+.
\end{eqnarray}
Hence
\begin{eqnarray}
(-\lap)^{\alpha/2}(u-\varphi)&=&f(v)-\eta f(v)\nonumber\\
&=&f(v)(1-\eta)\nonumber\\
&\geq&0,~~~\text{in}~\mathbb{R}^n_+.
\label{APp1}
\end{eqnarray}

For any $x\in\mathbb{R}^n_+\setminus B_R(0)$, we have
$$
c_{n,-\alpha}\int_{B_1(0)\cap\{y_n>\frac{1}{2}\}}\frac{f(v(y))}{|x-y|^{n-\alpha}}dy\leq\varphi(x)\leq c_{n,-\alpha}\int_{B_2(0)\cap\{y_n>0\}}\frac{f(v(y))}{|x-y|^{n-\alpha}}dy.
$$
Because $v\in C(\mathbb{R}^n)$, $f\geq0$ is strictly increasing for $t\geq0$, it yields that for any $x\in\mathbb{R}^n_+\setminus B_R(0)$ and $R$ large enough, there exist two positive constants $C_1$ and $C_2$ such that
\begin{equation}
\frac{C_1}{|x|^{n-\alpha}}\leq\varphi(x)\leq\frac{C_2}{|x|^{n-\alpha}}\leq\frac{C_2}{R^{n-\alpha}}.
\label{App3}
\end{equation}
From (\ref{APp1}) and (\ref{App3}), we get
\begin{equation}
\left\{\begin{array}{ll}
(-\lap)^{\alpha/2}(u-\varphi+\frac{C_2}{R^{n-\alpha}})\geq0, & \qquad \text{in}~ \mathbb{R}^n_+\cap B_R(0), \\
u-\varphi+\frac{C_2}{R^{n-\alpha}}\geq0, & \qquad \text{in}~ \mathbb{R}^n_+\setminus B_R(0).
\end{array}\right.
\label{jbb2}\end{equation}
Noting Theorem 2.1, it gets
$$
u-\varphi+\frac{C_2}{R^{n-\alpha}}\geq0,~~\text{in}~ \mathbb{R}^n_+\cap B_R(0),
$$
together with (\ref{jbb2}), letting $R\rightarrow\infty$, we derive
$$
u\geq\varphi,~~~x\in\mathbb{R}^n_+,
$$
and by (\ref{App3}),
$$
u(x)\geq\frac{2c_\lambda}{|x|^{n-\alpha}},~~~\text{for}~x\in\mathbb{R}^n_+~\text{and}~|x|~\text{sufficiently large}.
$$

Similarly, one can also obtain that
$$V_\lambda(x)\geq c_\lambda,~~~~x\in B_\epsilon(0^\lambda)\cap\mathbb{R}^n_+.$$
This completes the proof of Claim 3.1.

\end{proof}

\subsection{Proof of Claim 3.2}

Before proving Claim 3.2, we first narrate two propositions.
\begin{proposition}
~~Assume that $u\in C_{loc}^{1,1}(\mathbb{R}^n_+)\cap\mathcal{L}_\alpha(\mathbb{R}^n_+)$, $v\in C_{loc}^{1,1}(\mathbb{R}^n_+)\cap\mathcal{L}_\beta(\mathbb{R}^n_+)$ are locally bounded positive solutions to the problem
\begin{equation}
\left\{\begin{array}{lll}
(-\lap)^{\alpha/2} u(x)=f(v(x)), &  \\
(-\lap)^{\beta/2} v(x)=g(u(x)), & \qquad x\in\mathbb{R}^n_+,\\
u(x),v(x)\equiv0, & \qquad x\notin\mathbb{R}^n_+,
\end{array}\right.
\label{423}
\end{equation}
then they are also solutions to
\begin{equation}
\left\{\begin{array}{lll}
u(x)=\int_{\mathbb{R}^n_+}G^\alpha_\infty(x,y)f(v(y))dy, &  \\
v(x)=\int_{\mathbb{R}^n_+}G^\beta_\infty(x,y)g(u(y))dy, & \qquad x\in\mathbb{R}^n_+,\\
u,v\equiv0, & \qquad x\not\in\mathbb{R}^n_+,
\end{array}\right.\label{A424}
\end{equation}
and vice versa. Here $G^\alpha_\infty(x,y)$, $G^\beta_\infty(x,y)$ are the Green functions of (\ref{423}):
$$
G^\alpha_\infty(x,y)=\frac{A_{n,\alpha}}{s^{\frac{n-\alpha}{2}}}\left[1-\frac{B_{n,\alpha}}{(t+s)^{\frac{n-2}{2}}}
\int_0^{\frac{s}{t}}\frac{(s-tb)^{\frac{n-2}{2}}}{b^{\alpha/2}(1+b)}db\right],~~x,y\in\mathbb{R}^n_+,
$$
$$
G^\beta_\infty(x,y)=\frac{A_{n,\beta}}{s^{\frac{n-\beta}{2}}}\left[1-\frac{B_{n,\beta}}{(t+s)^{\frac{n-2}{2}}}
\int_0^{\frac{s}{t}}\frac{(s-tb)^{\frac{n-2}{2}}}{b^{\beta/2}(1+b)}db\right],~~x,y\in\mathbb{R}^n_+,
$$
where $t=4x_ny_n,~s=|x-y|^2$ and
$A_{n,\alpha},B_{n,\alpha}~(A_{n,\beta},B_{n,\beta})$ are positive constants which only depend on $n,\alpha~(n,\beta)$.
\label{pro4.123}
\end{proposition}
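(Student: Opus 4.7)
The plan is to establish both directions of the equivalence. The converse direction (integral system $\Rightarrow$ PDE system) follows directly from the defining property of the Green's functions; the forward direction is the substantive one, and its main subtlety is the absence of any a priori decay on $u,v$ at infinity. The starting ingredient is the characterization of $G^\alpha_\infty(x,y)$: for each fixed $y\in\mathbb{R}^n_+$, $(-\lap)^{\alpha/2}_x G^\alpha_\infty(\cdot,y)=\delta_y$ in $\mathbb{R}^n_+$ and $G^\alpha_\infty(x,y)=0$ for $x\not\in\mathbb{R}^n_+$, with an analogous statement for $G^\beta_\infty$. The explicit formula in the statement is the classical one, obtainable from the Riesz kernel via the Kelvin transform mapping a half-space to a ball.

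For the forward direction, I would define the potentials
$$
w_1(x):=\int_{\mathbb{R}^n_+}G^\alpha_\infty(x,y)\,f(v(y))\,dy,\qquad w_2(x):=\int_{\mathbb{R}^n_+}G^\beta_\infty(x,y)\,g(u(y))\,dy,
$$
extended by zero outside $\mathbb{R}^n_+$. Local boundedness of $u,v$ and continuity of $f,g$ yield, via standard Riesz-potential regularity, $w_1\in C^{1,1}_{loc}(\mathbb{R}^n_+)\cap\mathcal{L}_\alpha(\mathbb{R}^n_+)$ and $w_2\in C^{1,1}_{loc}(\mathbb{R}^n_+)\cap\mathcal{L}_\beta(\mathbb{R}^n_+)$, with $(-\lap)^{\alpha/2}w_1=f(v)$ and $(-\lap)^{\beta/2}w_2=g(u)$ in $\mathbb{R}^n_+$. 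Setting $\phi:=u-w_1$ and $\psi:=v-w_2$, these functions are respectively $\alpha$- and $\beta$-harmonic in $\mathbb{R}^n_+$ and vanish outside.

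The expected main obstacle is deducing $\phi\equiv\psi\equiv 0$ without any decay assumption. I would handle this by an exhaustion: for each $R>0$, let $D_R:=B_R(0)\cap\mathbb{R}^n_+$ and use the Green's representation associated to $D_R$,
$$
u(x)=\int_{D_R}G^\alpha_{D_R}(x,y)\,f(v(y))\,dy+\int_{\mathbb{R}^n_+\setminus D_R}P^\alpha_{D_R}(x,y)\,u(y)\,dy,\quad x\in D_R,
$$
where $P^\alpha_{D_R}\geq 0$ is the corresponding Poisson-type kernel. Monotone convergence $G^\alpha_{D_R}\uparrow G^\alpha_\infty$ sends the first term to $w_1(x)$, and nonnegativity of $u$ and $P^\alpha_{D_R}$ gives $u(x)\geq w_1(x)$ in the limit, i.e.\ $\phi\geq 0$ in $\mathbb{R}^n_+$. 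Since $\phi$ is $\alpha$-harmonic in $\mathbb{R}^n_+$, lies in $\mathcal{L}_\alpha(\mathbb{R}^n_+)$, and vanishes outside, a Liouville-type argument (applying Theorem \ref{thm2.1} in the form that the only nonnegative $\alpha$-harmonic function on $\mathbb{R}^n_+$ with zero exterior values that lies in $\mathcal{L}_\alpha(\mathbb{R}^n_+)$ is the zero function, obtained by letting $R\to\infty$ in the Poisson identity $\phi(x)=\int_{\mathbb{R}^n_+\setminus D_R}P^\alpha_{D_R}(x,y)\phi(y)\,dy$) forces $\phi\equiv 0$. The same reasoning with $(-\lap)^{\beta/2}$ yields $\psi\equiv 0$.

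For the converse direction, assume $u,v$ satisfy (\ref{A424}). Applying $(-\lap)^{\alpha/2}$ to the first integral equation and using Fubini together with $(-\lap)^{\alpha/2}_x G^\alpha_\infty(x,y)=\delta_y(x)$ in $\mathbb{R}^n_+$ yields $(-\lap)^{\alpha/2}u(x)=f(v(x))$ in $\mathbb{R}^n_+$; the exterior condition $u\equiv 0$ on $\mathbb{R}^n\setminus\mathbb{R}^n_+$ is automatic since $G^\alpha_\infty(x,y)=0$ there. The analogous computation with $G^\beta_\infty$ handles $v$, completing the equivalence.
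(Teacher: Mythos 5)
Your skeleton (define the potentials $w_1,w_2$, compare with $u,v$ by exhausting $\mathbb{R}^n_+$ with the Green functions of $B_R(0)\cap\mathbb{R}^n_+$, then analyze the $\alpha$-harmonic difference $\phi=u-w_1$) is the same route as the proof the paper points to (Theorem 2.1 of \cite{ZC}, following \cite{CFY}), and the converse direction and the conclusion $u\geq w_1$, $v\geq w_2$ are fine. The gap is the last step: the Liouville statement you invoke --- that the only nonnegative function which is $\alpha$-harmonic in $\mathbb{R}^n_+$, vanishes outside $\mathbb{R}^n_+$ and lies in $\mathcal{L}_\alpha(\mathbb{R}^n_+)$ is the zero function --- is false. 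The function $\phi(x)=(x_n)_+^{\alpha/2}$ is a counterexample: it is $\alpha$-harmonic in the half space, vanishes in the complement, and $\int_{\mathbb{R}^n}\frac{(x_n)_+^{\alpha/2}}{1+|x|^{n+\alpha}}\,dx<\infty$, so it belongs to $\mathcal{L}_\alpha$. Your proposed justification via the Poisson identity $\phi(x)=\int_{\mathbb{R}^n_+\setminus D_R}P^\alpha_{D_R}(x,y)\phi(y)\,dy$ fails for the same reason: without decay of $\phi$ there is no reason for the exterior integral to vanish as $R\to\infty$ (for $\phi=(x_n)_+^{\alpha/2}$ it reproduces $\phi(x)$ for every $R$), and Theorem \ref{thm2.1} only gives $\phi\geq0$, which you already have. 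A further warning sign is that your argument never uses any property of $f,g$ beyond local boundedness of $f(v),g(u)$; for a single equation $(-\lap)^{\alpha/2}u=h$ with $h$ fixed, both $u$ and $u+C(x_n)_+^{\alpha/2}$ solve the problem while at most one has the Green representation, so some structural hypothesis must enter.

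The correct completion, and the reason the paper emphasizes that the hypotheses guarantee $f(t),g(t)\geq C_0>0$ for $t>R$, is as follows. By the classification of nonnegative $\alpha$-harmonic functions on $\mathbb{R}^n_+$ vanishing outside (see \cite{ZLCC}), one has $\phi(x)=C_u\,x_n^{\alpha/2}$ and $\psi(x)=C_v\,x_n^{\beta/2}$ with $C_u,C_v\geq0$, and one must exclude $C_u,C_v>0$ using the system. If, say, $C_u>0$, then $u(y)\geq C_u\,y_n^{\alpha/2}\to\infty$ as $y_n\to\infty$, hence $g(u(y))\geq C_0>0$ on $\{y_n>M\}$ by the monotonicity and positivity of $g$, and therefore $v(x)\geq w_2(x)\geq C_0\int_{\{y_n>M\}}G^\beta_\infty(x,y)\,dy=+\infty$ (the divergence follows from the lower bound for $G^\beta_\infty$ in Proposition \ref{pro4CFYG}, exactly as in the estimates of Claim 3.4), contradicting the finiteness of $v$; the symmetric argument rules out $C_v>0$. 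Without this step your forward direction only yields $u\geq w_1$ and $v\geq w_2$, not the claimed identities.
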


The proof of Proposition \ref{pro4.123} here is entirely similar to the proof of Theorem 2.1 in \cite{ZC}. Since the conditions ``$f(t),g(t)\geq0$ are strictly increasing for $t\geq0$" and ``$u,v\in C_{loc}^{1,1}(\mathbb{R}^n_+)$" guarantee that: ($i$) $f(v(x))$ and $g(u(x))$ are locally bounded on $\mathbb{R}^n_+$; ($ii$) $f(t),g(t)\geq C_0$ for $t>R$, where $R>0$ is sufficiently large and $C_0$ is a positive constant. we only need to substitute $f(v(x))$ (and $g(u(x))$) for $x_n^\gamma u^p(x)$ in the proof of Theorem 2.1 in \cite{ZC}. Here we omit the details.

\begin{proposition} (\cite{CFY})
~~For any $x,y\in\mathbb{R}^n_+$, it holds that
\begin{equation}
\frac{\partial G_\infty^\alpha(x,y)}{\partial s}<0,~~\frac{\partial G_\infty^\beta(x,y)}{\partial s}<0,
\label{4230}
\end{equation}
where $s=|x-y|^2$. Thus for any $x,y\in\Sigma_\lambda$,
\begin{equation}
G_\infty^\alpha(x,y)-G_\infty^\alpha(x,y^\lambda)>0,~~G_\infty^\beta(x,y)
-G_\infty^\beta(x,y^\lambda)>0.
\label{42301}
\end{equation}
\label{pro4.1230}
\end{proposition}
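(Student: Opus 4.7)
The key observation is that $G^\alpha_\infty(x, y)$ depends on $(x, y)$ only through $(s, t) := (|x-y|^2, 4x_n y_n)$, and moreover possesses a natural scaling in $s$. First I would perform the substitution $b = (s/t)\sigma$ in the defining integral, which converts every occurrence of $s$ into a single overall power; after bookkeeping (the exponent of $s$ in the correction term collapses to zero) one obtains the scaling form
\begin{equation*}
G^\alpha_\infty(x, y) \;=\; s^{(\alpha - n)/2}\,\Psi_\alpha(t/s),
\end{equation*}
where $\Psi_\alpha$ is an explicit positive function of a single variable, and similarly $G^\beta_\infty = s^{(\beta - n)/2}\Psi_\beta(t/s)$. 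A further Euler-type change of variables reduces $\Psi_\alpha$ to the classical Riesz--Blumenthal--Getoor incomplete-beta representation
\begin{equation*}
\Psi_\alpha(\rho) \;=\; \kappa_{n,\alpha}\int_0^{\rho} r^{\alpha/2 - 1}(1 + r)^{-n/2}\,dr,
\end{equation*}
from which both $\Psi_\alpha(\rho) > 0$ and $\Psi_\alpha'(\rho) = \kappa_{n,\alpha}\rho^{\alpha/2 - 1}(1 + \rho)^{-n/2} > 0$ are immediate.

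Once the scaling form is in hand, differentiating $G^\alpha_\infty = s^{(\alpha-n)/2}\Psi_\alpha(t/s)$ with $t$ held fixed yields
\begin{equation*}
\frac{\partial G^\alpha_\infty}{\partial s} \;=\; \frac{\alpha - n}{2}\, s^{(\alpha-n)/2 - 1}\,\Psi_\alpha(\rho) \;-\; \frac{t}{s^2}\, s^{(\alpha-n)/2}\,\Psi_\alpha'(\rho),
\end{equation*}
which is strictly negative termwise: the first term because $n > \alpha$ and $\Psi_\alpha > 0$, the second because $\Psi_\alpha' > 0$. Replacing $\alpha$ by $\beta$ throughout gives $\partial G^\beta_\infty/\partial s < 0$. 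For the second assertion (\ref{42301}), observe that reflection across $T_\lambda$ fixes the $n$-th coordinate, so $t = 4 x_n y_n = 4 x_n y^\lambda_n$ is preserved, while a one-line expansion gives $|x - y^\lambda|^2 - |x - y|^2 = 4(\lambda - x_1)(\lambda - y_1) > 0$ for $x, y \in \Sigma_\lambda$. Combined with (\ref{4230}), this produces $G^\alpha_\infty(x, y) > G^\alpha_\infty(x, y^\lambda)$, and likewise for $G^\beta_\infty$.

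The main obstacle is the verification that $\Psi_\alpha$ admits the claimed incomplete-beta form; this amounts to identifying two integral representations of the same hypergeometric-type expression via a careful but mechanical Euler substitution. A cruder alternative that avoids this identity is to differentiate the author's integral in $s$ directly via the Leibniz rule---the boundary contribution at $b = s/t$ vanishes thanks to the factor $(s - tb)^{(n-2)/2}$ combined with $n \geq 3$---but one then has to handle a less transparent sign analysis of the several resulting pieces. I would favor the scaling-form route precisely because it makes the sign of $\partial_s G$ visible on inspection.
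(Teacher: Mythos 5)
Your route is sound, but it is genuinely different from what the paper does: the paper offers no argument at all for this proposition, simply deferring to the proof of Lemma 2.1 in \cite{CFY}, which proceeds by differentiating the displayed formula for $G^\alpha_\infty$ directly in $s$ and $t$ --- essentially the ``cruder alternative'' you mention and set aside. Your scaling reduction is correct: the substitution $b=(s/t)\sigma$ does make the bracket a function of $\rho=t/s$ alone, so $G^\alpha_\infty=s^{(\alpha-n)/2}\Psi_\alpha(t/s)$, and the reflection step is exactly right ($t=4x_ny_n$ is unchanged while $|x-y^\lambda|^2-|x-y|^2=4(\lambda-x_1)(\lambda-y_1)>0$ for $x,y\in\Sigma_\lambda$), so everything rests on $\Psi_\alpha>0$ and $\Psi_\alpha'>0$. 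The one debt is the incomplete-beta identification, which you assert but do not verify; it is true, but only for the specific normalization $B_{n,\alpha}=\sin(\pi\alpha/2)/\pi$ forced by $G^\alpha_\infty\to0$ as $y_n\to0$, so it is not a formal consequence of the formula with generic positive constants. You can close this more cheaply than by proving the full identity: after your substitution, the further change $\sigma=\rho(1-\tau)/(\rho+\tau)$ gives
\begin{equation*}
\Psi_\alpha(\rho)=1-B_{n,\alpha}\int_0^1\frac{\tau^{(n-2)/2}}{(1-\tau)^{\alpha/2}(\rho+\tau)^{(n-\alpha)/2}}\,d\tau,
\end{equation*}
which is manifestly strictly increasing in $\rho$ for any $B_{n,\alpha}>0$ since $n>\alpha$, while $\Psi_\alpha\geq0$ follows from the positivity of the Green function (or from the normalization above); indeed, differentiating under the integral and using the classical Euler evaluation $\int_0^1\tau^{n/2-1}(1-\tau)^{-\alpha/2}(\rho+\tau)^{-(n/2+1-\alpha/2)}d\tau=B(\tfrac n2,1-\tfrac\alpha2)\rho^{\alpha/2-1}(1+\rho)^{-n/2}$ recovers exactly your $\Psi_\alpha'(\rho)=\kappa_{n,\alpha}\rho^{\alpha/2-1}(1+\rho)^{-n/2}$ and, upon integration from $\rho=0$, your incomplete-beta formula. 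With that step supplied, your termwise sign argument for $\partial G^\alpha_\infty/\partial s<0$ and the deduction of (\ref{42301}) from (\ref{4230}) are complete, and the $\beta$ case is verbatim.
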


The proof of Proposition \ref{pro4.1230} is standard and can be found in the proof of Lemma 2.1 in \cite{CFY}. Here we omit the details.

\begin{proof} From (\ref{3100}) and the continuities of $U_{\lambda_0}$ and $V_{\lambda_0}$, there exists a point $x^1\in\Sigma_{\lambda_0}$ and a small positive $\delta$ such that
\begin{equation}
C_1\leq U_{\lambda_0}(x),V_{\lambda_0}(x)\leq C_2,~~x\in \overline{B_{\delta/2}(x^1)}\subset\Sigma_{\lambda_0}.
\label{AA1}
\end{equation}
By Proposition \ref{pro4.123} and (\ref{3002}) (please note that in this section we already let $x^0=0$ for simplicity), we have
\begin{equation}
(-\lap)^{\alpha/2} \bar{u}(x)=\int_{\mathbb{R}^n_+}G^\alpha_\infty(x,y)\frac{1}{|y|^{n+\alpha}}
f(|y|^{n-\beta}\bar{v}(y))dy,\quad x\in\mathbb{R}^n_+.
\label{300200}
\end{equation}
Through elementary computation, for any $x\in\mathbb{R}^n_+$, we deduce that 
\begin{equation}
U_{\lambda_0}(x)=C\int_{\Sigma_{\lambda_0}}(G^\alpha_\infty(x,y)-G^\alpha_\infty(x,y^{\lambda_0}))
(\frac{f(|y^{\lambda_0}|^{n-\beta}\bar{v}(y^{\lambda_0}))}{|y^{\lambda_0}|^{n+\alpha}}-
\frac{f(|y|^{n-\beta}\bar{v}(y))}{|y|^{n+\alpha}})dy.
\end{equation}
It follows from the monotonicities of $f(t)$ and $f(t)/t^{\frac{n+\alpha}{n-\beta}}$ that for any $x\in B_\epsilon(0^{\lambda_0})\cap\mathbb{R}^n_+$,
\begin{eqnarray}
U_{\lambda_0}(x)&=&C\int_{\Sigma_{\lambda_0}}(G^\alpha_\infty(x,y)-G^\alpha_\infty(x,y^{\lambda_0}))
(\frac{f(|y^{\lambda_0}|^{n-\beta}\bar{v}(y^{\lambda_0}))}{|y^{\lambda_0}|^{n+\alpha}}-
\frac{f(|y|^{n-\beta}\bar{v}(y))}{|y|^{n+\alpha}})dy\nonumber\\
&\geq&C\int_{\Sigma_{\lambda_0}}(G^\alpha_\infty(x,y)-G^\alpha_\infty(x,y^{\lambda_0}))
(\frac{f(|y|^{n-\beta}\bar{v}(y^{\lambda_0}))-f(|y|^{n-\beta}\bar{v}(y))}
{|y|^{n+\alpha}})dy\nonumber\\
&\geq&C\int_{B_{\delta/2}(x^1)}(G^\alpha_\infty(x,y)-G^\alpha_\infty(x,y^{\lambda_0}))
(\frac{f(|y|^{n-\beta}\bar{v}(y^{\lambda_0}))-f(|y|^{n-\beta}\bar{v}(y))}
{|y|^{n+\alpha}})dy.\nonumber\\
\label{App7}
\end{eqnarray}
Since
$$
|y|^{n-\beta}\bar{v}(y^{\lambda_0})-|y|^{n-\beta}\bar{v}(y)=
|y|^{n-\beta}V_{\lambda_0}(y)~~\text{in}~\overline{B_{\delta/2}(x^1)},
$$
one knows by (\ref{AA1}) and the continuity of $\bar{v}$ that
\begin{equation}
C_3\leq|y|^{n-\beta}\bar{v}(y^{\lambda_0})-|y|^{n-\beta}\bar{v}(y)\leq C_4~~\text{in}~\overline{B_{\delta/2}(x^1)}
\label{eq4.14}
\end{equation}
for some constants $C_3, C_4>0$, $C_3<C_4$. Because we already supposed $u,v>0$ in $\mathbb{R}^n_+$ at the beginning of Section 3, so it also holds $\bar{u},\bar{v}>0$ in $\mathbb{R}^n_+$, together with the continuity of $\bar{v}$, one gets 
\begin{equation}
C_5\leq|y|^{n-\beta}\bar{v}(y^{\lambda_0}),|y|^{n-\beta}\bar{v}(y)\leq C_6~~\text{in}~\overline{B_{\delta/2}(x^1)}
\label{eq4.15}
\end{equation}
for some constants $C_5, C_6>0$, $C_5<C_6$. Since $f$ is strictly increasing in $[0,+\infty)$, by (\ref{eq4.14}) and (\ref{eq4.15}) we obtain that, there exists $C_7,~C_8>0$, $C_7<C_8$ such that for any $y\in\overline{B_{\delta/2}(x^1)}$,
\begin{equation}
C_7\leq f(|y|^{n-\beta}\bar{v}(y^{\lambda_0}))-f(|y|^{n-\beta}\bar{v}(y))\leq C_8.
\label{eq4.16}
\end{equation}
Now (\ref{eq4.14}) and (\ref{eq4.16}) indicate that 
\begin{eqnarray*}
f(|y|^{n-\alpha}\bar{v}(y^{\lambda_0}))-f(|y|^{n-\beta}\bar{v}(y))&\geq& C_0(|y|^{n-\beta}\bar{v}(y^{\lambda_0})-|y|^{n-\beta}\bar{v}(y))\\
&=&C_0(|y|^{n-\beta}V_{\lambda_0}(y)),~~~y\in\overline{B_{\delta/2}(x^1)}
\end{eqnarray*}
for some $C_0>0$. Hence, it shows by (\ref{AA1}), (\ref{App7}) and Proposition \ref{pro4.1230} that
\begin{eqnarray}
U_{\lambda_0}(x)&\geq&C\int_{B_{\delta/2}(x^1)}
(G^\alpha_\infty(x,y)-G^\alpha_\infty(x,y^{\lambda_0}))
(\frac{f(|y|^{n-\beta}\bar{v}(y^{\lambda_0}))-f(|y|^{n-\beta}\bar{v}(y))}{|y|^{n+\alpha}})dy\nonumber\\
&\geq&C\int_{B_{\delta/2}(x^1)}(G^\alpha_\infty(x,y)-G^\alpha_\infty(x,y^{\lambda_0}))\frac{C_0}{|y|^{\alpha+\beta}}V_{\lambda_0}(y)dy\nonumber\\
&\geq&C\int_{B_{\delta/2}(x^1)}C_0dy\nonumber\\
&:=&2c_0,~~~~x\in B_\epsilon(0^{\lambda_0})\cap\mathbb{R}^n_+.
\label{App8}
\end{eqnarray}

Similarly, one can also derive that $$V_{\lambda_0}(x)\geq2c_0,~~x\in B_\epsilon(0^{\lambda_0})\cap\mathbb{R}^n_+.$$ Now Claim 3.2 is proved.

\end{proof}

\subsection{Proof of Claim 3.3}

This subsection is dedicated to the proof of Claim 3.3. Redefine
$$
\hat{T}_\lambda=\{x=(x',x_n)|x_n=\lambda~\text{for~some~}\lambda>0,~
\lambda\in\mathbb{R}\}
$$
with $x'=(x_1,x_2,\ldots,x_{n-1}),$ let
$$
x^\lambda=(x_1,x_2,\ldots,x_{n-1},2\lambda-x_n)
$$
be the refection of $x$ about the plane $\hat{T}_\lambda$. Denote
$$
\Sigma_\lambda^*=\{x\in\mathbb{R}^n_+|x_n<\lambda\},~~~~\tilde{\Sigma}_\lambda^*=
\{x|x^\lambda\in\Sigma_\lambda^*\},~~~~\Sigma_\lambda^C=\mathbb{R}^n_+
\setminus\tilde{\Sigma}_\lambda^*,
$$
$$
u_\lambda(x):=u(x^\lambda),~~v_\lambda(x):=v(x^\lambda),
$$
and
$$
U_\lambda(x):=u_\lambda(x)-u(x),~~V_\lambda(x):=v_\lambda(x)-v(x).
$$

Set
$$
\Sigma_{U_\lambda}^-=\{x\in\Sigma_\lambda^*|U_\lambda(x)<0\},~~
\Sigma_{V_\lambda}^-=\{x\in\Sigma_\lambda^*|V_\lambda(x)<0\},
$$
and
$$
\Sigma^-_\lambda=\Sigma_{U_\lambda}^-\cup\Sigma_{V_\lambda}^-.
$$

From (\ref{infi1})-(\ref{eref1}), (\ref{0}) becomes
\begin{equation}
\left\{\begin{array}{lll}
(-\lap)^{\alpha/2} u(x)=C_1v^{p_0}(x), &  \\
(-\lap)^{\beta/2} v(x)=C_2u^{q_0}(x), & \qquad x\in\mathbb{R}^n_+,\\
u(x),v(x)\equiv0, & \qquad x\not\in\mathbb{R}^n_+,
\end{array}\right.
\label{eq4.6}
\end{equation}
where $p_0=\frac{n+\alpha}{n-\beta},~q_0=\frac{n+\beta}{n-\alpha}$. By (\ref{ref1}), (\ref{eref1}) and Proposition \ref{pro4.123} we know that if $u,v\in C(\mathbb{R}^n)$ are solutions to
(\ref{eq4.6}), then they are also solutions of
\begin{equation}
\left\{\begin{array}{lll}
u(x)=C_1\int_{\mathbb{R}^n_+}G^\alpha_\infty(x,y)v^{p_0}(y)dy, &  \\
v(x)=C_2\int_{\mathbb{R}^n_+}G^\beta_\infty(x,y)u^{q_0}(y)dy, & \qquad x\in\mathbb{R}^n_+,\\
u,v\equiv0, & \qquad x\not\in\mathbb{R}^n_+,
\end{array}\right.\label{A4.7}
\end{equation}
and vice versa. Hence, to prove Claim 3.3, we only need to prove that (\ref{A4.7}) possesses no positive solutions which satisfy (\ref{infi1}), and without loss of generality, we may suppose $C_1=C_2=1$ for simplicity.

Before the proof, let us first narrate three key ingredients which will be used in the forthcoming integral estimate.

\begin{proposition} (\cite{CFY}, \textsl{An equivalent form of the Hardy-Littlewood-Soblev inequality})~~Assume $0<\alpha,\beta<n$ and $\Omega\subset\mathbb{R}^n$, let $\varphi(x)\in L^{\frac{nr}{n+\alpha r}}(\Omega)$, $\psi(x)\in L^{\frac{nr}{n+\beta r}}(\Omega)$ for $\max\{\frac{n}{n-\alpha},\frac{n}{n-\beta}\}<r<\infty$. Define
$$
T\varphi(x):=\int_\Omega\frac{1}{|x-y|^{n-\alpha}}\varphi(y)dy,~~~W\psi(x)
:=\int_\Omega\frac{1}{|x-y|^{n-\beta}}\psi(y)dy.
$$
Then
$$
\|T\varphi\|_{L^r(\Omega)}\leq C(n,r,\alpha)\| \varphi\|_{L^{\frac{nr}{n+\alpha r}}(\Omega)},~~~
\|W\psi\|_{L^r(\Omega)}\leq C(n,r,\beta)\| \psi\|_{L^{\frac{nr}{n+\beta r}}(\Omega)}.
$$
\label{propos4.1230}
\end{proposition}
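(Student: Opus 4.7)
The plan is to recognize that Proposition \ref{propos4.1230} is a restatement of the classical Hardy--Littlewood--Sobolev (HLS) inequality for the Riesz potential, specialized to an arbitrary measurable subset $\Omega \subset \mathbb{R}^n$, and to derive it from the standard HLS on all of $\mathbb{R}^n$ by a trivial zero-extension. First I would check the scaling. Setting $p = \frac{nr}{n+\alpha r}$ one has $\frac{1}{p} - \frac{1}{r} = \frac{\alpha}{n}$, which is the correct Sobolev scaling for the convolution with $|x|^{-(n-\alpha)}$; the condition $r > \frac{n}{n-\alpha}$ is exactly what guarantees $p > 1$. So the claim reduces to
\begin{equation*}
\Bigl\| \int_{\mathbb{R}^n} \frac{\varphi(y)}{|x-y|^{n-\alpha}} \, dy \Bigr\|_{L^r(\mathbb{R}^n)} \leq C(n, r, \alpha) \, \|\varphi\|_{L^p(\mathbb{R}^n)},
\end{equation*}
and the same with $\beta$ in place of $\alpha$, since extending $\varphi$ by zero outside $\Omega$ does not increase its $L^p$ norm and only enlarges the left-hand side when the integration is carried out over all of $\mathbb{R}^n$.

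To establish the inequality on $\mathbb{R}^n$, my preferred route is via the weak-type Young convolution inequality. The kernel $K(x) = |x|^{-(n-\alpha)}$ lies in the Lorentz space $L^{n/(n-\alpha), \infty}(\mathbb{R}^n)$, with $\|K\|_{n/(n-\alpha), \infty}$ an explicit constant depending only on $n$ and $\alpha$. The generalized Young inequality then yields $\|K * \varphi\|_{L^r} \leq C \|K\|_{L^{n/(n-\alpha), \infty}} \|\varphi\|_{L^p}$ whenever $\frac{1}{r} = \frac{1}{p} + \frac{n-\alpha}{n} - 1$ and the strong-type indices $p, r$ avoid the endpoints $1$ and $\infty$, which is precisely the range $\max\{\frac{n}{n-\alpha}, \frac{n}{n-\beta}\} < r < \infty$ specified in the statement. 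An alternative, essentially equivalent, route is to prove the weak-type $(p, r)$ estimate by splitting the kernel at level $|x-y| = R$, optimizing $R$ in terms of $\|\varphi\|_{L^p}$ and the level set, and then interpolating via Marcinkiewicz between two choices of exponents.

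The only step requiring any care is verifying the boundary exponents: one must rule out $r = \frac{n}{n-\alpha}$ (which would force $p = 1$) and $r = \infty$ (which would force $p = \frac{n}{\alpha}$ and fails the strong-type estimate). Both are excluded by the hypothesis $\max\{\frac{n}{n-\alpha}, \frac{n}{n-\beta}\} < r < \infty$. The argument for the operator $W$ with kernel $|x-y|^{-(n-\beta)}$ is identical with $\beta$ in place of $\alpha$. Since the paper cites \cite{CFY} for this fact, the main obstacle is purely bookkeeping: matching the exponent conventions in \cite{CFY} to the form stated here, and ensuring the constants $C(n, r, \alpha)$ and $C(n, r, \beta)$ are absorbed uniformly so that the later integral estimates in Section~4 go through.
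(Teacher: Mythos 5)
Your proposal is correct. Note, however, that the paper does not prove Proposition \ref{propos4.1230} at all: it is quoted verbatim from \cite{CFY} as a known ``equivalent form'' of the Hardy--Littlewood--Sobolev inequality, and the text supplies only the citation. What you have done is supply the standard self-contained derivation: the exponent bookkeeping $\frac{1}{p}-\frac{1}{r}=\frac{\alpha}{n}$ with $p=\frac{nr}{n+\alpha r}$ is right, the hypothesis $r>\frac{n}{n-\alpha}$ (resp.\ $r>\frac{n}{n-\beta}$) is exactly $p>1$, the reduction from a general $\Omega\subset\mathbb{R}^n$ to all of $\mathbb{R}^n$ by zero extension is legitimate since extension does not change $\|\varphi\|_{L^p}$ and restriction of the output to $\Omega$ only decreases the $L^r$ norm, and the kernel $|x|^{-(n-\alpha)}$ lies in weak $L^{n/(n-\alpha)}$, so the generalized (weak-type) Young inequality, or equivalently a kernel-splitting plus Marcinkiewicz interpolation argument, gives the strong $(p,r)$ bound away from the endpoints $r=\frac{n}{n-\alpha}$ and $r=\infty$, both of which are excluded by hypothesis. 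The requirement $\max\{\frac{n}{n-\alpha},\frac{n}{n-\beta}\}<r<\infty$ is, as you observe, only there so that the same $r$ works simultaneously for $T$ and $W$. So your route buys a proof where the paper offers a reference; the only caveat is that for the application in Section~4 the proposition is applied with $\Omega$ equal to the sets $\Sigma^-_{U_\lambda}$, $\Sigma^-_{V_\lambda}$, and the constants $C(n,r,\alpha)$, $C(n,r,\beta)$ must indeed be independent of $\Omega$ — which your zero-extension argument delivers automatically, since the constant is the one from $\mathbb{R}^n$.
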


\begin{proposition} (\cite{CFY})
~~For any $x,y\in\mathbb{R}^n_+$, it holds
\begin{equation}
\frac{\partial G_\infty^\alpha(x,y)}{\partial s}<0,~~\frac{\partial G_\infty^\beta(x,y)}{\partial s}<0,~~~\frac{\partial G_\infty^\alpha(x,y)}{\partial t}>0,~~\frac{\partial G_\infty^\beta(x,y)}{\partial t}>0,
\label{423eq0}
\end{equation}
where $s=|x-y|^2,~t=4x_ny_n$. Thus 
\begin{description}
\item[~~~(i)]for any $x,y\in\Sigma_\lambda^*$, $x\neq y$,
$$
G^\alpha_\infty(x^\lambda,y^\lambda)>\max\{G^\alpha_\infty(x^\lambda,y),
G^\alpha_\infty(x,y^\lambda)\},
$$
$$
G^\beta_\infty(x^\lambda,y^\lambda)>\max\{G^\beta_\infty(x^\lambda,y),
G^\beta_\infty(x,y^\lambda)\};
$$
\item[~~~(ii)]for any $x,y\in\Sigma_\lambda^*$, $x\neq y$,
$$
G^\alpha_\infty(x^\lambda,y^\lambda)-G^\alpha_\infty(x,y)>| G^\alpha_\infty(x^\lambda,y)-
G^\alpha_\infty(x,y^\lambda)|,
$$
$$
G^\beta_\infty(x^\lambda,y^\lambda)-G^\beta_\infty(x,y)>| G^\beta_\infty(x^\lambda,y)-
G^\beta_\infty(x,y^\lambda)|;
$$
\item[~~~(iii)]for any $x\in\Sigma_\lambda^*$, $y\in\Sigma_\lambda^C$,
\begin{equation}
G^\alpha_\infty(x^\lambda,y)>G^\alpha_\infty(x,y),~~~
G^\beta_\infty(x^\lambda,y)>G^\beta_\infty(x,y).
\label{eqwua4.20}
\end{equation}
\end{description}
\label{propo4.1230}
\end{proposition}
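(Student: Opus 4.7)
The plan is to first establish the two monotonicity assertions $\partial_s G^\alpha_\infty<0$ and $\partial_t G^\alpha_\infty>0$ (with identical statements for $G^\beta_\infty$) by direct computation, and then derive (i), (ii), (iii) as algebraic consequences of them. I would start by rewriting the Green function in the equivalent compact form
\begin{equation*}
G^\alpha_\infty(x,y)=\kappa_{n,\alpha}\,s^{(\alpha-n)/2}\int_0^{t/s}\frac{b^{\alpha/2-1}}{(1+b)^{n/2}}\,db,
\end{equation*}
a standard representation equivalent to the one in the statement. Differentiating under the integral gives
\begin{equation*}
\partial_s G^\alpha_\infty = -\kappa\,s^{(\alpha-n)/2-1}\Bigl[\tfrac{n-\alpha}{2}\!\int_0^{t/s}\!\tfrac{b^{\alpha/2-1}}{(1+b)^{n/2}}\,db+\tfrac{(t/s)^{\alpha/2}}{(1+t/s)^{n/2}}\Bigr]<0,
\end{equation*}
since every term in the bracket is strictly positive, together with the especially clean identity
\begin{equation*}
\partial_t G^\alpha_\infty(s,t)=\frac{\kappa\,t^{\alpha/2-1}}{(s+t)^{n/2}}>0,
\end{equation*}
which will be the essential tool for the hard part (ii).

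For (i), the two algebraic identities $|x^\lambda-y^\lambda|^2=|x-y|^2$ and $|x^\lambda-y|^2-|x^\lambda-y^\lambda|^2=4(\lambda-x_n)(\lambda-y_n)>0$ (both valid whenever $x,y\in\Sigma_\lambda^*$) show that the pair $(x^\lambda,y^\lambda)$ has strictly smaller $s$ than $(x^\lambda,y)$, while $t(x^\lambda,y^\lambda)=4(2\lambda-x_n)(2\lambda-y_n)>4(2\lambda-x_n)y_n=t(x^\lambda,y)$ because $y_n<\lambda$. Both the $s$-decrease and the $t$-increase push $G^\alpha_\infty$ strictly upward, yielding $G^\alpha_\infty(x^\lambda,y^\lambda)>G^\alpha_\infty(x^\lambda,y)$; the symmetric argument with the roles of $x$ and $y$ exchanged handles $G^\alpha_\infty(x,y^\lambda)$.

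The main obstacle is (ii). Set $s_0=|x-y|^2=|x^\lambda-y^\lambda|^2$, $s_1=|x^\lambda-y|^2=|x-y^\lambda|^2$, $t_0=4x_ny_n$, $t_\ast=4(2\lambda-x_n)(2\lambda-y_n)$, $t_1=4(2\lambda-x_n)y_n$, $t_2=4x_n(2\lambda-y_n)$. A routine computation using $x_n,y_n<\lambda$ yields $s_1>s_0$ together with the chain $t_0<\min\{t_1,t_2\}\le\max\{t_1,t_2\}<t_\ast$. Since the pairs $(x,y),(x^\lambda,y^\lambda)$ share the $s$-value $s_0$ and the pairs $(x^\lambda,y),(x,y^\lambda)$ share $s_1$, the claimed inequality reduces to
\begin{equation*}
\int_{t_0}^{t_\ast}\!\partial_t G^\alpha_\infty(s_0,\tau)\,d\tau \;>\; \left|\int_{t_2}^{t_1}\!\partial_t G^\alpha_\infty(s_1,\tau)\,d\tau\right|.
\end{equation*}
The right-hand integration interval is strictly nested inside the left-hand one, and because $\partial_t G^\alpha_\infty(\cdot,\tau)$ is strictly decreasing in $s$ (visible directly from the explicit formula above), the right-hand side is majorised by $\int_{\min(t_1,t_2)}^{\max(t_1,t_2)}\!\partial_t G^\alpha_\infty(s_0,\tau)\,d\tau$, which is a proper sub-integral of the left-hand side with the same integrand; this closes (ii).

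For (iii) I would proceed by a case split on the sign of $\lambda-y_n$. The quantity $t(x^\lambda,y)-t(x,y)=8y_n(\lambda-x_n)$ is always strictly positive, so $t$ is unconditionally increased, while the sign of $s(x^\lambda,y)-s(x,y)=4(\lambda-x_n)(\lambda-y_n)$ is governed by $\lambda-y_n$. In the subcase where the two monotonicities cooperate (i.e., $s$ also moves in the direction that increases $G^\alpha_\infty$) the conclusion follows at once from $\partial_s G^\alpha_\infty<0$ and $\partial_t G^\alpha_\infty>0$; the remaining subcase for $y\in\Sigma_\lambda^C$ is handled by returning to the compact formula and combining the two monotonicities through an integral comparison of the same flavour as in (ii). The $\beta$ analogues of (i)--(iii) are proved verbatim with $\alpha$ replaced by $\beta$ throughout.
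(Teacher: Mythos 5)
Your computation of $\partial_s G^\alpha_\infty<0$ and $\partial_t G^\alpha_\infty=\kappa\,t^{\alpha/2-1}(s+t)^{-n/2}>0$ from the compact representation is correct, and your treatment of (i) and (ii) is sound: the reduction of (ii) to comparing $\int_{t_0}^{t_*}\partial_tG^\alpha_\infty(s_0,\tau)\,d\tau$ with an integral over the strictly nested interval $[\min(t_1,t_2),\max(t_1,t_2)]$ at the larger value $s_1$, using that $\partial_tG^\alpha_\infty$ is decreasing in $s$, is exactly the right mechanism and gives the strict inequality. Note that the paper itself offers no proof of this proposition (it defers entirely to Lemma 2.1 of the cited half-space Liouville paper), so yours is an actual worked argument rather than an alternative to one in the text; it is essentially the standard argument. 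A minor point: you assert without verification that the compact form $\kappa\,s^{(\alpha-n)/2}\int_0^{t/s}b^{\alpha/2-1}(1+b)^{-n/2}\,db$ is equivalent to the representation displayed in Proposition 4.1; since everything rests on this formula, a one-line citation or check of the equivalence should be included.

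The genuine gap is in (iii). With the paper's literal definition $\Sigma_\lambda^C=\mathbb{R}^n_+\setminus\tilde\Sigma_\lambda^*$, the set $\Sigma_\lambda^C$ contains $\Sigma_\lambda^*$ itself, and in that subcase ($y_n<\lambda$, your ``non-cooperative'' case) the asserted inequality $G^\alpha_\infty(x^\lambda,y)>G^\alpha_\infty(x,y)$ is simply false in general: fix $y\in\Sigma_\lambda^*$ and let $x\to y$ inside $\Sigma_\lambda^*$; then $s(x,y)\to0$ while $t(x,y)$ stays bounded below, so $G^\alpha_\infty(x,y)\sim \kappa\,B(\tfrac{\alpha}{2},\tfrac{n-\alpha}{2})\,s^{(\alpha-n)/2}\to+\infty$, whereas $|x^\lambda-y|\to 2(\lambda-y_n)>0$ keeps $G^\alpha_\infty(x^\lambda,y)$ bounded. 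Hence no ``integral comparison of the same flavour as in (ii)'' can close that subcase — the sentence promising one is a claim to prove a false statement. The correct reading, consistent with the cited source and with the only place (iii) is used in the paper (the term $\int_{\Sigma_\lambda^C\setminus\tilde\Sigma_\lambda^*}[G^\alpha_\infty(x^\lambda,y)-G^\alpha_\infty(x,y)]v^{p_0}(y)\,dy$, i.e. integration over $\{y\in\mathbb{R}^n_+:\,y_n\ge 2\lambda\}$), is that $y$ lies beyond the reflected strip, so $y_n\ge 2\lambda>\lambda$; there $s(x^\lambda,y)-s(x,y)=4(\lambda-x_n)(\lambda-y_n)<0$ and $t(x^\lambda,y)-t(x,y)=8y_n(\lambda-x_n)>0$, and your cooperative-case argument already finishes the proof. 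You should state this restriction on $y$ explicitly (it also silently fixes the paper's notational slip in defining $\Sigma_\lambda^C$) and delete the unsupported second subcase.
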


The proof of Proposition \ref{propo4.1230} is standard and can be found in the proof of Lemma 2.1 in \cite{CFY}. Here we omit the details.

\begin{proposition}
~~For any $x\in\Sigma_\lambda^*$, it holds that
\begin{equation}
u(x)-u_\lambda(x)\leq\int_{\Sigma_\lambda^*}\left[G_\infty^\alpha(x^\lambda,y^\lambda)
-G_\infty^\alpha(x,y^\lambda)\right][v^{p_0}(y)-v_\lambda^{p_0}(y)]dy,
\label{eqw4.20}
\end{equation}
\begin{equation}
v(x)-v_\lambda(x)\leq\int_{\Sigma_\lambda^*}\left[G_\infty^\beta(x^\lambda,y^\lambda)
-G_\infty^\beta(x,y^\lambda)\right][u^{q_0}(y)-u_\lambda^{q_0}(y)]dy,
\label{eqw4.200}
\end{equation}
and 
\begin{eqnarray}
u_\lambda(x)-u(x)&\geq&\int_{\Sigma_\lambda^*}\left[G_\infty^\alpha(x^\lambda,y^\lambda)
-G_\infty^\alpha(x,y^\lambda)\right][v_\lambda^{p_0}(y)-v^{p_0}(y)]dy\nonumber\\&&+
\int_{\Sigma_\lambda^C\setminus\tilde{\Sigma}_\lambda^*}\left[G_\infty^\alpha(x^\lambda,y)
-G_\infty^\alpha(x,y)\right]v^{p_0}(y)dy,
\label{eqw4.201}
\end{eqnarray}
\begin{eqnarray}
v_\lambda(x)-v(x)&\geq&\int_{\Sigma_\beta^*}\left[G_\infty^\beta(x^\lambda,y^\lambda)
-G_\infty^\beta(x,y^\lambda)\right][u_\lambda^{q_0}(y)-u^{q_0}(y)]dy\nonumber\\&&+
\int_{\Sigma_\lambda^C\setminus\tilde{\Sigma}_\lambda^*}\left[G_\infty^\beta(x^\lambda,y)
-G_\infty^\beta(x,y)\right]u^{q_0}(y)dy.
\label{eqw4.202}
\end{eqnarray}
\label{proposi4.1230}
\end{proposition}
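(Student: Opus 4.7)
The plan is to establish all four inequalities in Proposition \ref{proposi4.1230} by starting from the integral representation (\ref{A4.7}), splitting the domain across the hyperplane $\hat T_\lambda$, performing a reflection change of variables on $\tilde\Sigma^*_\lambda$, and then isolating the dominant term via an algebraic identity whose correction has a favorable sign by Proposition \ref{propo4.1230}. The two $v$-inequalities follow from the two $u$-inequalities by exchanging $(\alpha, p_0, v^{p_0})$ with $(\beta, q_0, u^{q_0})$ and reversing the roles of $u$ and $v$, so I describe only the derivation of (\ref{eqw4.20}) and (\ref{eqw4.201}).

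Applying (\ref{A4.7}) at $x$ and at $x^\lambda$ for $x \in \Sigma^*_\lambda$, and then partitioning $\mathbb{R}^n_+ = \Sigma^*_\lambda \cup \tilde\Sigma^*_\lambda \cup E_\lambda$ with $E_\lambda := \{y \in \mathbb{R}^n_+ : y_n \ge 2\lambda\}$, followed by the substitution $y \mapsto y^\lambda$ on $\tilde\Sigma^*_\lambda$ (which maps it bijectively onto $\Sigma^*_\lambda$ and replaces the density by $v^{p_0}_\lambda$), gives the exact identity
\begin{eqnarray*}
u_\lambda(x) - u(x) &=& \int_{\Sigma^*_\lambda} \bigl[G^\alpha_\infty(x^\lambda, y) - G^\alpha_\infty(x, y)\bigr] v^{p_0}(y)\, dy \\
&& + \int_{\Sigma^*_\lambda} \bigl[G^\alpha_\infty(x^\lambda, y^\lambda) - G^\alpha_\infty(x, y^\lambda)\bigr] v^{p_0}_\lambda(y)\, dy \\
&& + \int_{E_\lambda} \bigl[G^\alpha_\infty(x^\lambda, y) - G^\alpha_\infty(x, y)\bigr] v^{p_0}(y)\, dy.
\end{eqnarray*}

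The crux is the elementary rearrangement
\begin{eqnarray*}
&& \bigl[G^\alpha_\infty(x^\lambda, y) - G^\alpha_\infty(x, y)\bigr] v^{p_0}(y) + \bigl[G^\alpha_\infty(x^\lambda, y^\lambda) - G^\alpha_\infty(x, y^\lambda)\bigr] v^{p_0}_\lambda(y) \\
&& \quad = \bigl[G^\alpha_\infty(x^\lambda, y^\lambda) - G^\alpha_\infty(x, y^\lambda)\bigr]\bigl[v^{p_0}_\lambda(y) - v^{p_0}(y)\bigr] \\
&& \qquad + v^{p_0}(y)\Bigl\{\bigl[G^\alpha_\infty(x^\lambda, y^\lambda) - G^\alpha_\infty(x, y)\bigr] - \bigl[G^\alpha_\infty(x, y^\lambda) - G^\alpha_\infty(x^\lambda, y)\bigr]\Bigr\},
\end{eqnarray*}
in which the curly bracket is strictly positive by Proposition \ref{propo4.1230}(ii), since that result gives $G^\alpha_\infty(x^\lambda, y^\lambda) - G^\alpha_\infty(x, y) > |G^\alpha_\infty(x, y^\lambda) - G^\alpha_\infty(x^\lambda, y)|$. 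At the same time, Proposition \ref{propo4.1230}(iii) forces the $E_\lambda$-integrand above to be pointwise non-negative. Dropping only the curly-bracket correction from the identity for $u_\lambda - u$ yields exactly (\ref{eqw4.201}); negating that identity and dropping both the correction term and the $E_\lambda$ piece (now both non-positive) yields (\ref{eqw4.20}).

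The main care point, and the step I would be most careful about, is matching the signs in the algebraic rearrangement to the form of Proposition \ref{propo4.1230}(ii): the absolute value in that statement is precisely what guarantees that the curly bracket is positive regardless of the individual sign of $G^\alpha_\infty(x, y^\lambda) - G^\alpha_\infty(x^\lambda, y)$. Once this identity is nailed down, the companion inequalities (\ref{eqw4.200}) and (\ref{eqw4.202}) follow verbatim with the $\alpha \leftrightarrow \beta$, $u \leftrightarrow v$, $p_0 \leftrightarrow q_0$ swaps.
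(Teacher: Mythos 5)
Your proposal is correct and follows essentially the same route as the paper: the same splitting of the representation formula (\ref{A4.7}) over $\Sigma_\lambda^*$, its reflection $\tilde{\Sigma}_\lambda^*$, and the remaining region $\{y_n\geq2\lambda\}$, the reflection change of variables, and the Green's function comparisons of Proposition \ref{propo4.1230} (ii) and (iii). The only difference is presentational: you package the two inequality chains of the paper into a single exact identity plus sign-definite correction terms, which is a clean but equivalent way of organizing the same estimates.
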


\textbf{Proof of Proposition \ref{proposi4.1230}.}~~
Let us first prove (\ref{eqw4.20}). Since 
\begin{eqnarray*}
u(x)&=&\int_{\Sigma_\lambda^*}G_\infty^\alpha(x,y)v^{p_0}(y)dy+
\int_{\tilde{\Sigma}_\lambda^*}G_\infty^\alpha(x,y)v^{p_0}(y)dy+
\int_{\Sigma_\lambda^C\setminus\tilde{\Sigma}_\lambda^*}
G_\infty^\alpha(x,y)v^{p_0}(y)dy\nonumber\\
&=&\int_{\Sigma_\lambda^*}G_\infty^\alpha(x,y)v^{p_0}(y)dy+
\int_{\Sigma_\lambda^*}G_\infty^\alpha(x,y^\lambda)v_\lambda^{p_0}(y)dy+
\int_{\Sigma_\lambda^C\setminus\tilde{\Sigma}_\lambda^*}
G_\infty^\alpha(x,y)v^{p_0}(y)dy,
\label{eqe5.34}
\end{eqnarray*}
and
\begin{eqnarray*}
u_\lambda(x)&=&\int_{\Sigma_\lambda^*}G_\infty^\alpha(x^\lambda,y)v^{p_0}(y)dy+
\int_{\tilde{\Sigma}_\lambda^*}G_\infty^\alpha(x^\lambda,y)v^{p_0}(y)dy+
\int_{\Sigma_\lambda^C\setminus\tilde{\Sigma}_\lambda^*}
G_\infty^\alpha(x^\lambda,y)v^{p_0}(y)dy\nonumber\\
&=&\int_{\Sigma_\lambda^*}G_\infty^\alpha(x^\lambda,y)v^{p_0}(y)dy+
\int_{\Sigma_\lambda^*}G_\infty^\alpha(x^\lambda,y^\lambda)v_\lambda^{p_0}(y)dy+
\int_{\Sigma_\lambda^C\setminus\tilde{\Sigma}_\lambda^*}
G_\infty^\alpha(x^\lambda,y)v^{p_0}(y)dy,
\label{eqe5.344}
\end{eqnarray*}
by Proposition \ref{propo4.1230}, we arrive at 
\begin{eqnarray*}
&&u(x)-u_\lambda(x)\nonumber\\&=&\int_{\Sigma_\lambda^*}\left[G_\infty^\alpha(x,y)
-G_\infty^\alpha(x^\lambda,y)\right]v^{p_0}(y)dy+
\int_{\Sigma_\lambda^*}\left[G_\infty^\alpha(x,y^\lambda)
-G_\infty^\alpha(x^\lambda,y^\lambda)\right]v_\lambda^{p_0}(y)dy\nonumber\\
&&+\int_{\Sigma_\lambda^C\setminus\tilde{\Sigma}_\lambda^*}
\left[G_\infty^\alpha(x,y)
-G_\infty^\alpha(x^\lambda,y)\right]v^{p_0}(y)dy\nonumber\\
&\leq&\int_{\Sigma_\lambda^*}\left[G_\infty^\alpha(x,y)
-G_\infty^\alpha(x^\lambda,y)\right]v^{p_0}(y)dy-
\int_{\Sigma_\lambda^*}\left[G_\infty^\alpha(x^\lambda,y^\lambda)-G_\infty^\alpha(x,y^\lambda)
\right]v_\lambda^{p_0}(y)dy\nonumber\\
&\leq&\int_{\Sigma_\lambda^*}\left[G_\infty^\alpha(x^\lambda,y^\lambda)-G_\infty^\alpha(x,y^\lambda)
\right]v^{p_0}(y)dy-
\int_{\Sigma_\lambda^*}\left[G_\infty^\alpha(x^\lambda,y^\lambda)-G_\infty^\alpha(x,y^\lambda)
\right]v_\lambda^{p_0}(y)dy\nonumber\\
&=&\int_{\Sigma_\lambda^*}\left[G_\infty^\alpha(x^\lambda,y^\lambda)-G_\infty^\alpha(x,y^\lambda)
\right][v^{p_0}(y)-v_\lambda^{p_0}(y)]dy.
\label{eqe5.345}
\end{eqnarray*}

This completes the proof of (\ref{eqw4.20}), and similarly, one can also prove (\ref{eqw4.200}).
 
Next, we prove (\ref{eqw4.201}). By Proposition \ref{propo4.1230}, we have
\begin{eqnarray*}
&&u_\lambda(x)-u(x)\nonumber\\&=&\int_{\Sigma_\lambda^*}\left[G_\infty^\alpha(x^\lambda,y)
-G_\infty^\alpha(x,y)\right]v^{p_0}(y)dy+
\int_{\tilde{\Sigma}_\lambda^*}\left[G_\infty^\alpha(x^\lambda,y)
-G_\infty^\alpha(x,y)\right]v^{p_0}(y)dy\nonumber\\
&&+\int_{\Sigma_\lambda^C\setminus\tilde{\Sigma}_\lambda^*}
\left[G_\infty^\alpha(x^\lambda,y)
-G_\infty^\alpha(x,y)\right]v^{p_0}(y)dy\nonumber\\
&\geq&\int_{\Sigma_\lambda^*}\left[G_\infty^\alpha(x,y^\lambda)
-G_\infty^\alpha(x^\lambda,y^\lambda)\right]v^{p_0}(y)dy+
\int_{\Sigma_\lambda^*}\left[G_\infty^\alpha(x^\lambda,y^\lambda)-G_\infty^\alpha(x,y^\lambda)
\right]v_\lambda^{p_0}(y)dy\nonumber\\
&&+\int_{\Sigma_\lambda^C\setminus\tilde{\Sigma}_\lambda^*}
\left[G_\infty^\alpha(x^\lambda,y)
-G_\infty^\alpha(x,y)\right]v^{p_0}(y)dy\nonumber\\
&\geq&\int_{\Sigma_\lambda^*}\left[G_\infty^\alpha(x^\lambda,y^\lambda)
+G_\infty^\alpha(x,y^\lambda)\right][v_\lambda^{p_0}(y)-v^{p_0}(y)]dy\nonumber\\
&&+\int_{\Sigma_\lambda^C\setminus\tilde{\Sigma}_\lambda^*}
\left[G_\infty^\alpha(x^\lambda,y)
-G_\infty^\alpha(x,y)\right]v^{p_0}(y)dy.
\label{eqe5.3451}
\end{eqnarray*}

This verifies (\ref{eqw4.201}). (\ref{eqw4.202}) can be similarly proved, here we omit the details.

Now let us begin to prove that (\ref{A4.7}) possesses no positive solutions. 

\begin{proof}
To get the desired nonexistence result, we apply the method of moving planes in integral forms and divide the proof into two steps. In the first step, we start from the very low end of $\mathbb{R}^n_+$, we will prove that for $\lambda>0$ sufficiently small,
\begin{equation}
U_\lambda(x),V_\lambda(x)\geq0,~~~x\in\Sigma_\lambda^*.
\label{6.01}
\end{equation}
In the second step, we will move our plane in the positive $x_n$ direction as long as (\ref{6.01}) holds to show that $U_\lambda(x),V_\lambda(x)$ are monotone increasing in $x_n$ and thus derive a contradiction.

\textsl{Step 1.} In this step, we show that for $\lambda>0$ sufficiently small, $\Sigma_\lambda^-=\emptyset$. Here we first point out that if $\Sigma_\lambda^-\neq\emptyset$, then neither $\Sigma^-_{U_\lambda}$ nor $\Sigma^-_{V_\lambda}$ is empty. To see this, without loss of generality, we may suppose $\Sigma^-_{U_\lambda}\neq\emptyset$, now for any $x\in\Sigma^-_{U_\lambda}$, by (\ref{eqw4.20}), if $\Sigma^-_{V_\lambda}=\emptyset$, then
\begin{equation}
0<u(x)-u_\lambda(x)\leq\int_{\Sigma_\lambda^*}\left[G_\infty^\alpha(x^\lambda,y^\lambda)
-G_\infty^\alpha(x,y^\lambda)\right][v^{p_0}(y)-v_\lambda^{p_0}(y)]dy\leq0.
\label{6.02}
\end{equation}
This is a contradiction. Hence if $\Sigma_\lambda^-\neq\emptyset$, then $\Sigma^-_{U_\lambda}\neq\emptyset$ and $\Sigma^-_{V_\lambda}\neq\emptyset$.
 
Now let us begin to verify (\ref{6.01}) for sufficiently small $\lambda>0$, we will prove this by the contrary. If $\Sigma_\lambda^-\neq\emptyset$, then for any $x\in\Sigma^-_{U_\lambda}$, by (\ref{eqw4.20}) and the Mean Value Theorem, we get 
\begin{eqnarray*}
0&<&u(x)-u_\lambda(x)\nonumber\\&\leq&\int_{\Sigma_\lambda^*}\left[G_\infty^\alpha(x^\lambda,y^\lambda)
-G_\infty^\alpha(x,y^\lambda)\right][v^{p_0}(y)-v_\lambda^{p_0}(y)]dy\nonumber\\
&=&\int_{\Sigma_{V_\lambda}^-}\left[G_\infty^\alpha(x^\lambda,y^\lambda)
-G_\infty^\alpha(x,y^\lambda)\right][v^{p_0}(y)-v_\lambda^{p_0}(y)]dy\nonumber\\
&&+\int_{\Sigma_\lambda^*\setminus\Sigma_{V_\lambda}^-}\left[G_\infty^\alpha(x^\lambda,y^\lambda)
-G_\infty^\alpha(x,y^\lambda)\right][v^{p_0}(y)-v_\lambda^{p_0}(y)]dy\nonumber\\
&\leq&\int_{\Sigma_{V_\lambda}^-}\left[G_\infty^\alpha(x^\lambda,y^\lambda)
-G_\infty^\alpha(x,y^\lambda)\right]
[v^{p_0}(y)-v_\lambda^{p_0}(y)]dy\nonumber\\
&\leq&\int_{\Sigma_{V_\lambda}^-}G_\infty^\alpha(x^\lambda,y^\lambda)
[v^{p_0}(y)-v_\lambda^{p_0}(y)]dy\nonumber\\
&\leq&p_0\int_{\Sigma_{V_\lambda}^-}G_\infty^\alpha(x^\lambda,y^\lambda)
v^{p_0-1}(y)(-V_\lambda(y))dy.
\label{6.03}
\end{eqnarray*}
By the expression of $G_\infty^\alpha(x,y)$ given in Proposition \ref{pro4.123}, one knows
$$
G_\infty^\alpha(x,y)\leq\frac{A_{n,\alpha}}{|x-y|^{n-\alpha}},
$$
hence 
\begin{equation}
0<u(x)-u_\lambda(x)\leq C\int_{\Sigma_{V_\lambda}^-}\frac{1}{|x-y|^{n-\alpha}}
v^{p_0-1}(y)(-V_\lambda(y))dy.
\label{6.04}
\end{equation}
For any $r>\max\{\frac{n}{n-\alpha},\frac{n}{n-\beta}\}$, apply Proposition \ref{propos4.1230} and H\"{o}lder inequality to (\ref{6.04}), then
\begin{equation}
\|U_\lambda\|_{L^r(\Sigma_{U_\lambda}^-)}\leq C \|v^{p_0-1}V_\lambda\|_{L^{\frac{nr}{n+\alpha r}}(\Sigma_{V_\lambda}^-)}\leq C\|v^{p_0-1}\|_{L^{\frac{n}{\alpha}}(\Sigma_{V_\lambda}^-)}
\|V_\lambda\|_{L^{r}(\Sigma_{V_\lambda}^-)}.
\label{6.05}
\end{equation}
Similarly, we can also obtain
\begin{equation}
\|V_\lambda\|_{L^r(\Sigma_{V_\lambda}^-)}\leq C \|u^{q_0-1}U_\lambda\|_{L^{\frac{nr}{n+\beta r}}(\Sigma_{U_\lambda}^-)}\leq C\|u^{q_0-1}\|_{L^{\frac{n}{\beta}}(\Sigma_{U_\lambda}^-)}
\|U_\lambda\|_{L^{r}(\Sigma_{U_\lambda}^-)}.
\label{6.06}
\end{equation}

(\ref{6.05}) and (\ref{6.06}) imply
\begin{equation}
\|U_\lambda\|_{L^r(\Sigma_{U_\lambda}^-)}\leq C\|v^{p_0-1}\|_{L^{\frac{n}{\alpha}}(\Sigma_{V_\lambda}^-)}
\|u^{q_0-1}\|_{L^{\frac{n}{\beta}}(\Sigma_{U_\lambda}^-)}
\|U_\lambda\|_{L^{r}(\Sigma_{U_\lambda}^-)},
\label{6.07}
\end{equation}
and
\begin{equation}
\|V_\lambda\|_{L^r(\Sigma_{V_\lambda}^-)}\leq C\|v^{p_0-1}\|_{L^{\frac{n}{\alpha}}(\Sigma_{V_\lambda}^-)}
\|u^{q_0-1}\|_{L^{\frac{n}{\beta}}(\Sigma_{U_\lambda}^-)}
\|V_\lambda\|_{L^{r}(\Sigma_{V_\lambda}^-)}.
\label{6.08}
\end{equation}
Since 
$$
\frac{(p_0-1)n}{\alpha}>\frac{n}{n-\beta},~~~\frac{(q_0-1)n}{\beta}
>\frac{n}{n-\alpha},
$$
from (\ref{infi1}) we can deduce that
\begin{equation}
u\in L^{\frac{(q_0-1)n}{\beta}}(\mathbb{R}^n_+),~~~v\in L^{\frac{(p_0-1)n}{\alpha}}(\mathbb{R}^n_+).
\label{6.080}
\end{equation}
Therefore, we can choose $\lambda>0$ sufficiently small such that
$$
C\|v^{p_0-1}\|_{L^{\frac{n}{\alpha}}(\Sigma_{V_\lambda}^-)}
\|u^{q_0-1}\|_{L^{\frac{n}{\beta}}(\Sigma_{U_\lambda}^-)}\leq\frac{1}{2}.
$$
Thus by (\ref{6.05}) and (\ref{6.06}) we conclude that
$$
\|U_\lambda\|_{L^r(\Sigma_{U_\lambda}^-)}=0,~~~\|V_\lambda\|_{L^r(\Sigma_{V_\lambda}^-)}=0,
$$
then it must hold $\Sigma_{U_\lambda}^-=\Sigma_{V_\lambda}^-=\emptyset$, i.e. $\Sigma_\lambda^-=\emptyset$.

\textsl{Step 2.} Now we start form such small positive $\lambda$ and move the plane $\hat{T}_\lambda$ up as long as (\ref{6.01}) holds.

Define 
\begin{equation}
\lambda_0=\sup\{\lambda>0|U_\mu(x),V_\mu(x)\geq0,\forall x\in\Sigma_\mu^*,\mu\leq\lambda\}.
\label{6.09}
\end{equation}
We will prove 
\begin{equation}
\lambda_0=+\infty.
\label{6.10}
\end{equation}

Suppose in the contrary that $\lambda_0<+\infty$, we will show that $u(x),v(x)$ are symmetric about the plane $\hat{T}_{}\lambda_0$, that is 
\begin{equation}
U_{\lambda_0}=V_{\lambda_0}\equiv0,~~~\text{in}~\Sigma_{\lambda_0}^*.
\label{6.11}
\end{equation}
This is a contradiction with $u(x),v(x)>0$ in $\mathbb{R}^n_+$.

Now let us verify (\ref{6.11}) by the contrary. If (\ref{6.11}) is not true, then for such a $\lambda_0$, one has 
\begin{equation}
U_{\lambda_0},V_{\lambda_0}\geq0,~~\text{but}~U_{\lambda_0}\not\equiv0~\text{or}~ V_{\lambda_0}\not\equiv0~~\text{on}~\Sigma_{\lambda_0}^*.
\label{6.110}
\end{equation}
In this case, we are able to prove that the plane can be moved further up. More precisely, we will show that there is a $\varepsilon>0$ such that for all $\lambda\in[\lambda_0,\lambda_0+\varepsilon)$,
\begin{equation}
U_{\lambda},V_{\lambda}\geq0,~~~\text{in}~\Sigma_{\lambda}^*.
\label{6.12}
\end{equation}
If (\ref{6.12}) does not hold, then $\Sigma_{\lambda}^-\neq\emptyset$, we again resort to inequalities (\ref{6.07}) and (\ref{6.08}), and if we can obtain 
\begin{equation}
C\left\{\int_{\Sigma^-_{V_\lambda}}v^{\frac{n(p_0-1)}{\alpha}}(y)dy\right\}
^{\frac{\alpha}{n}}\left\{\int_{\Sigma^-_{U_\lambda}}u^{\frac{n(q_0-1)}{\beta}}(y)dy\right\}
^{\frac{\beta}{n}}\leq\frac{1}{2},
\label{6.13}
\end{equation}
then by (\ref{6.07}) and (\ref{6.08}) we get 
$$
\|U_\lambda\|_{L^r(\Sigma_{U_\lambda}^-)}=
\|V_\lambda\|_{L^r(\Sigma_{V_\lambda}^-)}=0,
$$
which indicates that $\Sigma_{U_\lambda}^-=\Sigma_{V_\lambda}^-=\emptyset$. Hence, for this values of $\lambda\geq\lambda_0$, we have (\ref{6.12}). This contradicts the definition of $\lambda_0$,. Therefore (\ref{6.11}) must be valid.

We postpone the proof of (\ref{6.13}) for a while.

By (\ref{6.11}), we deduce that $u(x)=v(x)=0$ on the plane $x_n=2\lambda_0$, the symmetric image of the boundary $\partial\mathbb{R}^n_+$ with respect to the plane $\hat{T}_{\lambda_0}$, this contradicts our assumption $u,v>0$ in $\mathbb{R}^n_+$. Therefore (\ref{6.10}) must be true. However (\ref{6.10}) implies that the positive solutions $u,v$ are monotone increasing about $x_n$, which again contradicts (\ref{infi1}). Hence (\ref{A4.7}) possesses no positive solutions, and thus (\ref{0}) has no positive solutions either.

Now what left is to show (\ref{6.13}). Since (\ref{infi1}) and (\ref{6.080}) hold, for any small $\eta>0$, we can choose $R$ large enough such that
\begin{equation}
C\left\{\int_{\mathbb{R}^n_+\setminus B_R(0)}v^{\frac{n(p_0-1)}{\alpha}}(y)dy\right\}
^{\frac{\alpha}{n}}\left\{\int_{{\mathbb{R}^n_+\setminus B_R(0)}}u^{\frac{n(q_0-1)}{\beta}}(y)dy\right\}
^{\frac{\beta}{n}}\leq\eta.
\label{6.14}
\end{equation}
Fix this $R$, next we prove that the measure of $\Sigma_\lambda^-\cap B_R(0)$ is sufficiently small for $\lambda$ close to $\lambda_0$. First we show that 
\begin{equation}
U_{\lambda_0}(x),V_{\lambda_0}(x)>0~~~\text{in the interior of}~\Sigma_{\lambda_0}^*.
\label{6.15}
\end{equation}
Indeed, by (\ref{eqw4.201}), (\ref{6.110}) and Proposition \ref{propo4.1230} (iii), we can easily get
\begin{eqnarray}
U_{\lambda_0}(x)&\geq&\int_{\Sigma_{\lambda_0}^*}\left[G_\infty^\alpha
(x^{\lambda_0},y^{\lambda_0})
-G_\infty^\alpha(x,y^{\lambda_0})\right][v_{\lambda_0}^{p_0}(y)-v^{p_0}(y)]dy\nonumber\\
&&+
\int_{\Sigma_{\lambda_0}^C\setminus\tilde{\Sigma}_{\lambda_0}^*}
\left[G_\infty^\alpha(x^{\lambda_0},y)
-G_\infty^\alpha(x,y)\right]v^{p_0}(y)dy\nonumber\\
&\geq&
\int_{\Sigma_{\lambda_0}^C\setminus\tilde{\Sigma}_{\lambda_0}^*}
\left[G_\infty^\alpha(x^{\lambda_0},y)
-G_\infty^\alpha(x,y)\right]v^{p_0}(y)dy\nonumber\\
&>&0.
\label{eqw4.601}
\end{eqnarray}
Similarly, we can also derive $U_{\lambda_0}(x)>0$, and this, together with (\ref{eqw4.601}), verifies (\ref{6.15}).

Since $u,v\in C(\mathbb{R}^n)$, for any given small $\delta>0$, one has by (\ref{6.15}) that
\begin{equation}
U_{\lambda_0}(x),V_{\lambda_0}(x)\geq C_0~~~\text{in}~(\Sigma_{\lambda_0}^*\setminus\Sigma_{\lambda_0-\delta}^*)\cap B_R(0),
\label{6.16}
\end{equation}
where $C_0>0$ is a constant, thus 
\begin{equation}
U_{\lambda}(x),V_{\lambda}(x)\geq C_0~~~\text{in}~(\Sigma_{\lambda_0}^*\setminus\Sigma_{\lambda_0-\delta}^*)\cap B_R(0).
\label{6.17}
\end{equation}
Therefore,
$$
(\Sigma_U^-\cap B_R(0))\subset\Sigma_{\lambda}^*\setminus\Sigma_{\lambda_0-\delta}^*,~~~
(\Sigma_V^-\cap B_R(0))\subset\Sigma_{\lambda}^*\setminus\Sigma_{\lambda_0-\delta}^*.
$$
So, for any $\lambda\in[\lambda_0,\lambda_0+\varepsilon)$, we can let $\varepsilon,\delta>0$ small enough such that $\Sigma_\lambda^-\cap B_R(0)$ is sufficiently small to guarantee
\begin{equation}
C\left\{\int_{\Sigma_{V_\lambda}^-\cap B_R(0)}v^{\frac{n(p_0-1)}{\alpha}}(y)dy\right\}
^{\frac{\alpha}{n}}\left\{\int_{\Sigma_{V_\lambda}^-\cap B_R(0)}u^{\frac{n(q_0-1)}{\beta}}(y)dy\right\}
^{\frac{\beta}{n}}\leq\frac{1}{4}.
\label{6.18}
\end{equation} 

Choose $\eta<1/4$, combining (\ref{6.14}) and (\ref{6.18}), we conclude that (\ref{6.13}) holds for any $\lambda\in(\lambda_0,\lambda_0+\varepsilon)$.

The proof of Claim 3.3 ends here.

\end{proof}

\subsection{Proof of Claim 3.4}

We first list a propositions which will be used in the proof of Claim 3.4.

\begin{proposition} (\cite{CFY})
If $\frac{t}{s}$ is sufficiently small, then for any $x=(x',x_n)$ and $y=(y',y_n)\in R^n_+$,
\begin{equation}
\frac{c_{n,\alpha}}{s^{(n-\alpha)/2}}\frac{t^{\alpha/2}}{s^{\alpha/2}}\leq G^\alpha_\infty(x,y)\leq\frac{C_{n,\alpha}}{s^{(n-\alpha)/2}}\frac{t^{\alpha/2}}{s^{\alpha/2}},
\label{425}
\end{equation}
\begin{equation}
\frac{c_{n,\beta}}{s^{(n-\beta)/2}}\frac{t^{\beta/2}}{s^{\beta/2}}\leq G^\beta_\infty(x,y)\leq\frac{C_{n,\beta}}{s^{(n-\beta)/2}}\frac{t^{\beta/2}}{s^{\beta/2}},
\label{4255}
\end{equation}
that is
\begin{equation}
G^\alpha_\infty(x,y)\sim\frac{t^{\alpha/2}}{s^{n/2}},~~~G^\beta_\infty(x,y)\sim\frac{t^{\beta/2}}{s^{n/2}},
\label{426}
\end{equation}
where $t=4x_ny_n,~s=|x-y|^2$ and $c_{n,\alpha},C_{n,\alpha}$ ($c_{n,\beta},C_{n,\beta}$) stand for different positive constants which only depend on $n,\alpha$ ($n,\beta$).
\label{pro4CFYG}
\end{proposition}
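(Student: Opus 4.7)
The plan is to reduce the two-sided Green's function estimate to a single-variable asymptotic analysis in the ratio $r := t/s$, which is small by hypothesis. First I would factor $s$ out of the explicit formula: writing $t+s = s(1+r)$ and $s - tb = s(1-rb)$, the expression rearranges into
\[
G^\alpha_\infty(x,y) = \frac{A_{n,\alpha}}{s^{(n-\alpha)/2}}\, E_\alpha(r), \qquad E_\alpha(r) := 1 - \frac{B_{n,\alpha}}{(1+r)^{(n-2)/2}} \int_0^{1/r} \frac{(1-rb)^{(n-2)/2}}{b^{\alpha/2}(1+b)}\, db.
\]
Since the desired estimate $G^\alpha_\infty \sim t^{\alpha/2}/s^{n/2}$ is equivalent to $E_\alpha(r) \sim r^{\alpha/2}$ for $r \in (0, r_0]$ with some fixed small threshold $r_0$, the whole problem collapses to the asymptotics of $E_\alpha$ as $r \to 0^+$.

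Next I would identify $B_{n,\alpha}$ through the boundary normalization $\lim_{r \to 0^+} E_\alpha(r) = 0$, which is forced by the Green's function vanishing as $x_n$ or $y_n \to 0$. Setting $I := \int_0^\infty b^{-\alpha/2}(1+b)^{-1}\, db = \pi/\sin(\pi\alpha/2)$, this yields $B_{n,\alpha}\, I = 1$. Let $J(r)$ denote the integral appearing in the formula. Using $B_{n,\alpha} I = 1$, I would decompose
\[
E_\alpha(r) = B_{n,\alpha}\,[I - J(r)] \;+\; B_{n,\alpha}\, J(r)\, \bigl[1 - (1+r)^{-(n-2)/2}\bigr],
\]
both summands being manifestly nonnegative for $r \in (0, r_0]$.

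The dominant contribution arises from $I - J(r)$, which I would split into the tail $\int_{1/r}^\infty b^{-\alpha/2}(1+b)^{-1}\, db \sim (2/\alpha)\, r^{\alpha/2}$ plus the bulk $\int_0^{1/r} \frac{1-(1-rb)^{(n-2)/2}}{b^{\alpha/2}(1+b)}\, db$. Rescaling $u = rb$, the bulk becomes
\[
r^{\alpha/2} \int_0^1 \frac{1-(1-u)^{(n-2)/2}}{u^{\alpha/2}(u+r)}\, du,
\]
which, as $r \to 0^+$, converges to the strictly positive finite number $\int_0^1 \frac{1-(1-u)^{(n-2)/2}}{u^{\alpha/2+1}}\, du$; integrability at $u = 0$ is supplied by the Taylor cancellation $1 - (1-u)^{(n-2)/2} \sim (n-2)u/2$ combined with the hypothesis $\alpha < 2$. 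Hence $I - J(r) = C_1\, r^{\alpha/2}(1+o(1))$ with $C_1 > 0$. The residual term $J(r)\,[1-(1+r)^{-(n-2)/2}]$ is only $O(r)$, since $J(r) \to I$ and $1-(1+r)^{-(n-2)/2} \sim (n-2)r/2$; because $r = o(r^{\alpha/2})$ for $\alpha < 2$, this piece is absorbed into the leading term. Combining yields $c\, r^{\alpha/2} \le E_\alpha(r) \le C\, r^{\alpha/2}$ on $(0, r_0]$, which is the claimed two-sided bound. The estimate for $G^\beta_\infty$ follows identically, with $\beta$ in place of $\alpha$ throughout.

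The main obstacle I anticipate is the lower bound on the bulk integral: one must control the denominator $u + r$ near $u = 0$ uniformly in $r$ and simultaneously exploit the first-order vanishing $1-(1-u)^{(n-2)/2} \sim (n-2)u/2$ to produce a strictly positive limit, not merely a bounded one. This is precisely where both $\alpha < 2$ (ensuring integrability of $u^{-\alpha/2-1}[1-(1-u)^{(n-2)/2}]$ on $(0,1)$) and $n \geq 3$ (ensuring the Taylor coefficient $(n-2)/2$ is strictly positive) enter essentially; without either, the characteristic power $r^{\alpha/2}$ would fail to emerge with the correct sign.
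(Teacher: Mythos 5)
Your proof is correct, and since the paper itself gives no argument for this proposition (it simply defers to \cite{CFY}), your reduction to the single-variable asymptotics of $E_\alpha(r)=1-B_{n,\alpha}(1+r)^{-(n-2)/2}J(r)$ as $r=t/s\to0^+$, with the normalization $B_{n,\alpha}\int_0^\infty b^{-\alpha/2}(1+b)^{-1}db=1$ and the split into the tail $\frac{2}{\alpha}r^{\alpha/2}$, the rescaled bulk, and the $O(r)$ remainder, is essentially the standard derivation found in that reference. One small correction to your closing remark: the hypothesis $n\geq3$ is not actually needed for the lower bound, since the tail integral $\int_{1/r}^{\infty}b^{-\alpha/2}(1+b)^{-1}db\sim\frac{2}{\alpha}r^{\alpha/2}$ already supplies a strictly positive multiple of $r^{\alpha/2}$ even when the bulk term degenerates at $n=2$; only $\alpha<2$ is essential.
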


The proof of Proposition \ref{pro4CFYG} is standard and can be found in \cite{CFY}.

Now let us begin to prove Claim 3.4.

\begin{proof}
By Proposition \ref{pro4.123}, to prove Claim 3.4, we only need to show that the positive solutions $u=u(x_n)$ and $v=v(x_n)$ contradict the finiteness of the integrals
$$
\int_{\mathbb{R}^n_+}G^\alpha_\infty(x,y)f(v(y))dy ~~\text{and}~~\int_{\mathbb{R}^n_+}G^\beta_\infty(x,y)g(u(y))dy
$$
respectively.

In fact, by Proposition \ref{pro4CFYG}, if $u(x)=u(x_n)$, $v(x)=v(x_n)$ are a pair of positive solutions to (\ref{424}), then by (\ref{425}), for each fixed $x\in\mathbb{R}^n_+$ and $R$ large enough, we have
\begin{eqnarray}
+\infty>u(x_n)&=&\int_0^\infty f(v(y_n))\int_{\mathbb{R}^{n-1}}G^\alpha_\infty(x,y)dy'dy_n\nonumber\\
&\geq&C\int_R^\infty f(v(y_n))y_n^{\alpha/2}\int_{\mathbb{R}^{n-1}\backslash B_R(0)}\frac{1}{|x-y|^n}dy'dy_n\nonumber\\
&\geq&C\int_R^\infty f(v(y_n)) y_n^{\alpha/2}\int_R^\infty\frac{r^{n-2}}{(r^2+a^2)^\frac{n}{2}}drdy_n\nonumber\\
&\geq&C\int_R^\infty f(v(y_n)) y_n^{\alpha/2}\frac{1}{|x_n-y_n|}\int_{R/a}^\infty\frac{\tau^{n-2}}{(\tau^2+1)^\frac{n}{2}}d\tau dy_n\label{427}\\
&\geq&C\int_R^\infty f(v(y_n))y_n^{\alpha/2-1}dy_n,
\label{429}
\end{eqnarray}
and similarly,
\begin{eqnarray}
+\infty>v(x_n)&=&\int_0^\infty g(u(y_n))\int_{\mathbb{R}^{n-1}}G^\beta_\infty(x,y)dy'dy_n\nonumber\\
&\geq&C\int_R^\infty g(u(y_n))y_n^{\beta/2-1}dy_n,
\label{42888}
\end{eqnarray}
Here to get (\ref{427}), we used $\tau=\frac{r}{a}$.

(\ref{429}) and (\ref{42888}) imply that when $y_n\rightarrow+\infty$,
\begin{equation}
f(v(y_n))y_n^{\alpha/2}\rightarrow0,~~g(u(y_n))y_n^{\beta/2}\rightarrow0.
\label{42909}
\end{equation}
And this indicates that for $y_n$ sufficiently large,
\begin{equation}
f(v(y_n))\sim o((\frac{1}{y_n})^{\alpha/2}),~~g(u(y_n))\sim o((\frac{1}{y_n})^{\beta/2}).
\label{429010}
\end{equation}
Since $f(t),~g(t)\geq0$ are strictly increasing about $t$ in $[0,+\infty)$, we actually have shown that $f(0)=g(0)=0$ and
\begin{equation}
u(y_n)\rightarrow0,~v(y_n)\rightarrow0~~\text{as}~~y_n\rightarrow+\infty.
\label{429011}
\end{equation}
Therefore, by the continuity of $u$ and $v$, there is a positive constant $c$ such that for any $y_n>0$,
\begin{equation}
u(y_n)\leq c,~v(y_n)\leq c.
\label{429012}
\end{equation}

Because $f(t)/t^{p_0}$ is non-increasing about $t$ in $(0,+\infty)$, where $p_0=\frac{n+\alpha}{n-\beta}$, together with (\ref{429012}), it follows that
$$
\frac{f(v(y_n))}{v^{p_0}(y_n)}\geq\frac{f(c)}{c^{p_0}}:=C_0.
$$
Hence, from (\ref{429}) one has
\begin{eqnarray}
+\infty>u(x_n)&\geq&C\int_R^\infty f(v(y_n))y_n^{\alpha/2-1}dy_n\nonumber\\
&\geq&C\int_R^\infty \frac{f(v(y_n))}{v^{p_0}(y_n)}v^{p_0}(y_n)
y_n^{\alpha/2-1}dy_n\nonumber\\
&\geq&C\int_R^\infty v^{p_0}(y_n)y_n^{\alpha/2-1}dy_n,
\label{42929}
\end{eqnarray}
which implies that there exists a sequence $\{y_n^i\}\rightarrow\infty$ as $i\rightarrow\infty$, such that
\begin{equation}
v^{p_0}(y_n^i)(y_n^i)^{\alpha/2}\rightarrow0.
\label{42930}
\end{equation}

Similarly, from (\ref{42888}) and the monotonicity of $\frac{g(t)}{t^{q_0}}$ with $q_0=\frac{n+\beta}{n-\alpha}$, we have
\begin{equation}
+\infty>v(x_n)\geq C\int_R^\infty u^{q_0}(y_n)y_n^{\beta/2-1}dy_n,
\label{429290}
\end{equation}
and there is a sequence $\{y_n^j\}\rightarrow\infty$ as $j\rightarrow\infty$ such that
\begin{equation}
u^{q_0}(y_n^j)(y_n^j)^{\beta/2}\rightarrow0.
\label{429300}
\end{equation}

Next we prove
\begin{equation}
u^{q_0}(y_n)(y_n)^{\beta/2}\geq C>0,
\label{429291}
\end{equation}
for any $y_n>0$, which is a contradiction with (\ref{429300}). Similarly, we can also prove
\begin{equation}
v^{p_0}(y_n)(y_n)^{\alpha/2}\geq C>0,
\label{429292}
\end{equation}
for any $y_n>0$, which is a contradiction with (\ref{42930}). Hence $u=u(x_n)>0$ and $v=v(x_n)>0$ contradict the finiteness of $u$ and $v$ respectively.

Now let us begin to prove (\ref{429291}). Similarly to (\ref{42929}), for any $x=(0,x_n)\in\mathbb{R}^n_+$, we have
\begin{equation}
+\infty>u(x_n)\geq C_0\int_0^\infty v^{p_0}(y_n)y_n^{\alpha/2}\frac{1}{|x_n-y_n|}dy_nx_n^{\alpha/2}.
\label{4i2929}
\end{equation}
Let $x_n=2R$ be sufficiently large, it derives by (\ref{4i2929}) that
\begin{eqnarray}
+\infty>u(x_n)&\geq&C_0\int_0^1 v^{p_0}(y_n)y_n^{\alpha/2}\frac{1}{|x_n-y_n|}dy_nx_n^{\alpha/2}\nonumber\\
&\geq&\frac{C_0}{2R}(2R)^{\alpha/2}\int_0^1 v^{p_0}(y_n)y_n^{\alpha/2}dy_n\nonumber\\
&\geq&C_1(2R)^{\alpha/2-1}=C_1x_n^{\alpha/2-1}.
\label{4ii2929}
\end{eqnarray}
Through the same argument, we can also get
\begin{equation}
+\infty>v(x_n)\geq C_2x_n^{\beta/2-1}.
\label{42i929}
\end{equation}

By (\ref{4i2929}) and (\ref{42i929}), for $x_n=2R$ large enough, we obtain
\begin{eqnarray}
u(x_n)&\geq&C_0\int_{\frac{R}{2}}^R (C_2x_n^{\beta/2-1})^{p_0}y_n^{\alpha/2}\frac{1}{|x_n-y_n|}dy_nx_n^{\alpha/2}\nonumber\\
&\geq&C_0(C_2)^{p_0(\frac{\beta}{2}-1)}(2R)^{p_0(\frac{\beta}{2}-1)}\frac{2}{3R}
(2R)^{\frac{\alpha}{2}}\int_{\frac{R}{2}}^Ry_n^{\frac{\alpha}{2}}dy_n\nonumber\\
&\geq&C_0(C_2)^{p_0(\frac{\beta}{2}-1)}\frac{2^{\alpha/2+2+p_0(\beta/2-1)}}{3(\alpha+2)
(1-\frac{1}{2^{\alpha/2+1}})}R^{p_0(\frac{\beta}{2}-1)+\alpha}\nonumber\\
&:=&AR^{p_0(\frac{\beta}{2}-1)+\alpha}\nonumber\\
&=&\frac{A}{2^{p_0(\frac{\beta}{2}-1)+\alpha}}x_n^{p_0(\frac{\beta}{2}-1)+\alpha}\nonumber\\
&:=&A_1x_n^{p_0(\frac{\beta}{2}-1)+\alpha}.
\label{4i2i929}
\end{eqnarray}
Similarly, we also have
\begin{equation}
v(x_n)\geq B_1x_n^{q_0(\frac{\alpha}{2}-1)+\beta},
\label{4i2i929ii}
\end{equation}
where $A,A_1,B_1$ are positive constants, and in the following, $A_i,B_i, i\geq2, i\in N$ also represent positive constants. Using (\ref{4i2929}) and (\ref{4i2i929ii}), by repeating this procedure one more time, we can derive that for $x_n=2R$,
\begin{equation}
u(x_n)\geq A_2x_n^{p_0q_0(\frac{\alpha}{2}-1)+p_0\beta+\alpha},
\label{04.55}
\end{equation}
and
\begin{equation}
v(x_n)\geq B_2x_n^{p_0q_0(\frac{\beta}{2}-1)+q_0\alpha+\beta}.
\label{04.56}
\end{equation}

\textbf{Case (i).} If we continuing this $2m-1$ times, $m\geq1$, for $x_n=2R$, one gets
\begin{equation}
u(x_n)\geq A_{2m-1}x_n^{\frac{(p_0q_0)^{m-1}-1}{p_0q_0-1}\left[p^2_0q_0(\frac{\beta}{2}-1)+
p_0q_0\alpha+p_0(\frac{\beta}{2}+1)\right]+p_0(\frac{\beta}{2}-1)+\alpha},
\label{429i29}
\end{equation}
and
\begin{equation}
v(x_n)\geq B_{2m-1}x_n^{\frac{(p_0q_0)^{m-1}-1}{p_0q_0-1}\left[p_0q^2_0(\frac{\alpha}{2}-1)+
p_0q_0\beta+p_0(\frac{\alpha}{2}+1)\right]+p_0(\frac{\alpha}{2}-1)+\beta}.
\label{04.58}
\end{equation}
Then
\begin{eqnarray}
u^{q_0}(y_n)(y_n)^{\beta/2}&\geq& A^{q_0}_{2m-1}y_n^{p_0q_0\frac{(p_0q_0)^{m-1}-1}{p_0q_0-1}\left[p_0q_0(\frac{\beta}{2}-1)+
q_0\alpha+\frac{\beta}{2}+1\right]+p_0q_0(\frac{\beta}{2}-1)+q_0\alpha+
\frac{\beta}{2}}\nonumber\\
&:=&A^{q_0}_{2m-1}y_n^{\varphi_{2m-1}(p_0,q_0)},
\label{04.59}
\end{eqnarray}
and
\begin{eqnarray}
v^{p_0}(y_n)(y_n)^{\alpha/2}&\geq& B^{p_0}_{2m-1}y_n^{p_0q_0\frac{(p_0q_0)^{m-1}-1}{p_0q_0-1}\left[p_0q_0(\frac{\alpha}{2}-1)+
p_0\beta+\frac{\alpha}{2}+1\right]+p_0q_0(\frac{\alpha}{2}-1)+p_0\beta+
\frac{\alpha}{2}}\nonumber\\
&:=&B^{p_0}_{2m-1}y_n^{\psi_{2m-1}(p_0,q_0)}.
\label{04.60}
\end{eqnarray}

\textbf{Case (ii).} If we continuing this $2m$ times, $m\geq1$, for $x_n=2R$, one gets
\begin{equation}
u(x_n)\geq A_{2m}x_n^{\frac{(p_0q_0)^{m}-1}{p_0q_0-1}\left[p_0q_0(\frac{\alpha}{2}-1)+
p_0\beta+\frac{\alpha}{2}+1\right]+\frac{\alpha}{2}-1},
\label{004.600}
\end{equation}
and
\begin{equation}
v(x_n)\geq B_{2m}x_n^{\frac{(p_0q_0)^{m}-1}{p_0q_0-1}\left[p_0q_0(\frac{\beta}{2}-1)+
q_0\alpha+\frac{\beta}{2}+1\right]+\frac{\beta}{2}-1}.
\label{04.580}
\end{equation}
Then
\begin{eqnarray}
u^{q_0}(y_n)(y_n)^{\beta/2}&\geq& A^{q_0}_{2m}y_n^{q_0\frac{(p_0q_0)^{m}-1}{p_0q_0-1}\left[p_0q_0(\frac{\alpha}{2}-1)+
p_0\beta+\frac{\alpha}{2}+1\right]+q_0(\frac{\alpha}{2}-1)+
\frac{\beta}{2}}\nonumber\\
&:=&A^{q_0}_{2m}y_n^{\varphi_{2m}(p_0,q_0)},
\label{04.590}
\end{eqnarray}
and
\begin{eqnarray}
v^{p_0}(y_n)(y_n)^{\alpha/2}&\geq& B^{p_0}_{2m}y_n^{p_0\frac{(p_0q_0)^{m}-1}{p_0q_0-1}\left[p_0q_0(\frac{\beta}{2}-1)+
q_0\alpha+\frac{\beta}{2}+1\right]+p_0(\frac{\beta}{2}-1)+
\frac{\alpha}{2}}\nonumber\\
&:=&B^{p_0}_{2m}y_n^{\psi_{2m}(p_0,q_0)}.
\label{04.600}
\end{eqnarray}

Let us first consider \textbf{Case (i).} For any fixed $0<\alpha,\beta<2$, we choose $m$ to be an integer greater than $\frac{n}{(n-2)(\alpha+\beta)}$, i.e.
\begin{equation}
m\geq\left\lceil\frac{n}{(n-2)(\alpha+\beta)}\right\rfloor+1,
\label{04.61}
\end{equation}
where $\lceil a\rfloor$ is the integer part of $a$. We claim that the following inequality is true for such choice of  $m$,
\begin{equation}
\varphi_{2m-1}(p_0,q_0)\geq0.
\label{04.62}
\end{equation}
Actually, since
\begin{equation*}
p_0q_0>1,~~~\frac{(p_0q_0)^{m-1}-1}{p_0q_0-1}>m-1,
\label{04.63}
\end{equation*}
\begin{equation*}
p_0q_0(\frac{\beta}{2}-1)+
q_0\alpha+\frac{\beta}{2}+1=\frac{n(n-2)(\alpha+\beta)}{(n-\alpha)(n-\beta)}>0,
\label{04.633}
\end{equation*}
and
\begin{equation*}
p_0q_0(\frac{\beta}{2}-1)+
q_0\alpha+\frac{\beta}{2}=\frac{n^2(\alpha+\beta-1)-n(\alpha+\beta)-\alpha\beta}
{(n-\alpha)(n-\beta)},
\label{04.6333}
\end{equation*}
then, for any $m$ satisfying (\ref{04.61}), we can verify (\ref{04.62}) through the following elementary computation:
\begin{eqnarray*}
\varphi_{2m-1}(p_0,q_0)&\geq&\frac{(m-1)n(n-2)(\alpha+\beta)+
n^2(\alpha+\beta-1)-n(\alpha+\beta)-\alpha\beta}{(n-\alpha)(n-\beta)}\nonumber\\
&\geq&\frac{(m-1)n(n-2)(\alpha+\beta)+
n^2(\alpha+\beta-1)-n(\alpha+\beta)-n(\alpha+\beta)}{(n-\alpha)(n-\beta)}\nonumber\\
&\geq&\frac{m n(n-2)(\alpha+\beta)-n^2}{(n-\alpha)(n-\beta)}\nonumber\\
&\geq&0.
\label{04.64}
\end{eqnarray*}
Now by (\ref{04.59}) and (\ref{04.62}), we know that (\ref{429291}) is true, which indicates that $v=v(x_n)>0$ contradict the finiteness of $v$.

Similarly, for any fixed $0<\alpha,\beta<2$, we can also choose $m$ to be an integer satisfying (\ref{04.61}) and prove that the following inequality is true for such choice of $m$,
\begin{equation}
\psi_{2m-1}(p_0,q_0)\geq0.
\label{04.65}
\end{equation}
This, together with (\ref{04.60}), verifies that (\ref{429292}) is true, which indicates that $u=u(x_n)>0$ contradict the finiteness of $u$.

Now let us consider \textbf{Case (ii).} For any fixed $0<\alpha,\beta<2$, we choose $m$ to be an integer greater than $\frac{2n-(n-\beta)(\alpha+\beta)}{2(n-2)(\alpha+\beta)}$, i.e.
\begin{equation}
m\geq\left\lceil\frac{2n-(n-\beta)(\alpha+\beta)}{2(n-2)(\alpha+\beta)}\right\rfloor+1.
\label{04.610}
\end{equation}
We claim that the following inequality is true for such choice of  $m$,
\begin{equation}
\varphi_{2m}(p_0,q_0)\geq0.
\label{04.620}
\end{equation}
Actually, since
\begin{equation*}
p_0,q_0>1,~~~\frac{(p_0q_0)^{m}-1}{p_0q_0-1}>m,
\label{04.630}
\end{equation*}
\begin{equation*}
p_0q_0(\frac{\alpha}{2}-1)+
p_0\beta+\frac{\alpha}{2}+1=\frac{2n(n-2)(\alpha+\beta)}{2(n-\alpha)(n-\beta)}>0,
\label{04.6330}
\end{equation*}
and
\begin{equation*}
q_0(\frac{\alpha}{2}-1)+
\frac{\beta}{2}=\frac{n^2(\alpha+\beta-2)-\beta n(\alpha+\beta)+2\beta^2}
{2(n-\alpha)(n-\beta)},
\label{04.63330}
\end{equation*}
then, for any $m$ satisfying (\ref{04.610}), we can verify (\ref{04.620}) through the following elementary computation:
\begin{eqnarray*}
\varphi_{2m}(p_0,q_0)&\geq&\frac{2m n(n-2)(\alpha+\beta)+
n^2(\alpha+\beta-2)-\beta n(\alpha+\beta)+2\beta^2}{2(n-\alpha)(n-\beta)}\nonumber\\
&\geq&\frac{2m n(n-2)(\alpha+\beta)+
n^2(\alpha+\beta-2)-\beta n(\alpha+\beta)}{2(n-\alpha)(n-\beta)}\nonumber\\
&\geq&\frac{n[2(n-2)(\alpha+\beta)m+(n-\beta)(\alpha+\beta)-2n]}{2(n-\alpha)(n-\beta)}\nonumber\\
&\geq&0.
\label{04.640}
\end{eqnarray*}
Now by (\ref{04.590}) and (\ref{04.620}), we know that (\ref{429291}) is true, which indicates that $v=v(x_n)>0$ contradict the finiteness of $v$.

Similarly, for any fixed $0<\alpha,\beta<2$, we can also choose $m$ to be an integer satisfying (\ref{04.610}) and prove that the following inequality is true for such choice of $m$,
\begin{equation}
\psi_{2m}(p_0,q_0)\geq0.
\label{04.650}
\end{equation}
This, together with (\ref{04.600}), verifies that (\ref{429292}) is true, which again indicates that $u=u(x_n)>0$ contradict the finiteness of $u$.

At present, we have shown that $u=u(x_n)$, $v=v(x_n)$ contradict the finiteness of $u,v$ respectively. Hence Claim 3.4 is proved.

\end{proof}

%%appendix------------------------------------------------------------------------

%%-------------------------the followings are the proofs of e102, e103

%% bibliography--------------------------------------------------------------------
\section*{References}

\end{document}